\newtheorem{thm}{Theorem}[section]
\newtheorem{lemma}[thm]{Lemma}
\newtheorem{prop}[thm]{Proposition}
\newtheorem{coroll}[thm]{Corollary}
\newtheorem{defi}[thm]{Definition}
\theoremstyle{remark}
\newtheorem{rmk}[thm]{Remark}
\newtheorem{ex}[thm]{Example}
\renewcommand{\div}{\operatorname{div}}
\newcommand{\dom}{\mathsf{D}}
\newcommand{\E}{\mathop{{}\mathbb{E}}}
\newcommand{\cE}{\mathscr{E}}
\newcommand{\cF}{\mathscr{F}}
\newcommand{\cL}{\mathscr{L}}
\renewcommand{\P}{\mathbb{P}}
\newcommand{\erre}{\mathbb{R}}
\newcommand{\enne}{\mathbb{N}}
\newcommand{\embed}{\hookrightarrow}
\newcommand{\wto}{\rightharpoonup}
\DeclarePairedDelimiter\abs{\lvert}{\rvert}
\DeclarePairedDelimiter\norm{\lVert}{\rVert}
\DeclarePairedDelimiterX\ip[2]{\langle}{\rangle}{#1,#2}
\numberwithin{equation}{section}
\newif\ifbozza
\def\luca #1{{\color{red}#1}}
\def\luca #1{#1}
\title{A variational approach to dissipative
    SPDEs\\ with singular drift}
  \author{Carlo Marinelli\thanks{Department of Mathematics, University
      College London, Gower Street, London WC1E 6BT, United
      Kingdom. URL: \texttt{http://goo.gl/4GKJP}} 
  \and Luca Scarpa\thanks{Department of Mathematics, University
      College London, Gower Street, London WC1E 6BT, United
      Kingdom. E-mail: \texttt{luca.scarpa.15@ucl.ac.uk}}}
\date{March 24, 2017}
\begin{document}
\maketitle

\begin{abstract}
  We prove global well-posedness for a class of dissipative semilinear
  stochastic evolution equations with singular drift and
  multiplicative Wiener noise. In particular, the nonlinear term in
  the drift is the superposition operator associated to a maximal
  monotone graph everywhere defined on the real line, on which neither
  continuity nor growth assumptions are imposed. The hypotheses on the
  diffusion coefficient are also very general, in the sense that the
  noise does not need to take values in spaces of continuous, or
  bounded, functions in space and time. Our approach combines
  variational techniques with a priori estimates, both
  pathwise and in expectation, on solutions to regularized equations.
  \medskip\par\noindent
  \emph{AMS Subject Classification:} 60H15; 47H06; 46N30.
  \medskip\par\noindent
  \emph{Key words and phrases:} stochastic evolution equations,
  singular drift, variational approach, well-posedness, multiplicative
  noise, monotone operators.
\end{abstract}


\section{Introduction}
\label{sec:intro}
Our aim is to establish existence and uniqueness of solutions, and
their continuous dependence on the initial datum, to the following
semilinear stochastic evolution equation on $L^2(D)$, with
$D \subset \erre^n$ a bounded domain:
\begin{equation}
  \label{eq:0}
  dX(t) + AX(t)\,dt + \beta(X(t))\,dt \ni B(t,X(t))\,dW(t),
  \qquad X(0)=X_0,
\end{equation}
where $A$ is a linear maximal monotone operator on $L^2(D)$ associated
to a coercive Markovian bilinear form, $\beta$ is a maximal monotone
graph in $\erre\times\erre$ defined everywhere, $W$ is a cylindrical
Wiener process on a separable Hilbert space $U$, and $B$ takes values
in the space of Hilbert-Schmidt operators from $U$ to $L^2(D)$ and
satisfies suitable Lipschitz continuity assumptions. Precise
assumptions on the data of the problem and on the definition of
solution are given in Section \ref{sec:results} below.  Since any
increasing function $\beta_0:\erre\to\erre$ can be extended in a
canonical way to a maximal monotone graph of $\erre\times\erre$ by
``filling the gaps'' (i.e., setting
$\beta(x):=[\beta_0(x^-),\beta_0(x^+)]$ for all $x\in\erre$, where
$\beta(x^-)$ and $\beta(x^+)$ denote the limit from the left and from
the right of $\beta_0$ at $x$, respectively), Equation \eqref{eq:0}
can be interpreted as a formulation of the stochastic evolution
equation
\[
  dX(t) + AX(t)\,dt + \beta_0(X(t))\,dt = B(t,X(t))\,dW(t),
  \qquad X(0)=X_0.
\]
Semilinear equations with singular and rapidly growing drift appear,
for instance, in mathematical models of Euclidean quantum field theory
(see, e.g., \cite{Kawa} for an equation with exponentially growing
drift), and, most importantly for us, cannot be directly treated with
the existing methods, hence are interesting from a purely mathematical
perspective as well. In particular, the variational approach (see
\cite{KR-spde,Pard}) works only assuming that $\beta$ satisfies
suitable polynomial growth conditions depending on the dimension $n$
of the underlying Euclidean space (see also \cite[pp.~137-ff.]{LiuRo}
for improved sufficient conditions, still dependent on the dimension),
whereas most available results relying on the semigroup approach
require just polynomial growth, although usually compensated by rather
stringent hypotheses on the noise (see, e.g.,
\cite{cerrai03,DP-K}). Under natural assumptions on the noise,
well-posedness in $L^p$ spaces is proven, with different methods, in
\cite{KvN2}, under the further assumption that $\beta$ is locally
Lipschitz continuous, and in \cite{cm:rd}. A common basis for both
works is the semigroup approach on UMD Banach spaces.  A special
mention deserves the short note \cite{Barbu:Lincei}, where the author
considers problem \eqref{eq:0} with $A=-\Delta$ and $B$ independent of
$X$, and proves existence of a pathwise solution\footnote{To avoid
  misunderstandings, we should clarify once and for all that with this
  expression we do \emph{not} refer to a solution in the sense of
  rough paths, but simply ``with $\omega$ fixed''.} assuming that the
solution $Z$ to the equation with $\beta \equiv 0$ (i.e., the
stochastic convolution) is jointly continuous in space and
time. Furthermore, assuming that
\[
\E\int_0^T\!\!\int_D j(Z) < \infty,
\]
where $j$ is a primitive of $\beta$, he obtains that the pathwise
solution may admit a version that can be considered as a generalized
mild solution to \eqref{eq:0}. This is the only result we are aware of
about existence of solutions to stochastic semilinear parabolic
equations \emph{without} growth assumptions on the drift in any
dimension. 
\luca{%
It is well known that a well-posedness theory for stochastic evolution
equations on a Hilbert space $H$ of the type
\begin{equation*}
  du + Au\,dt \ni B(u)\,dW, \qquad u(0)=u_0,
\end{equation*}
with $A$ an arbitrary (nonlinear) maximal monotone operator, is, in
full generality, not yet available, even if $B$ does not depend on $u$
and is a fixed non-random operator. However, a satisfactory treatment
in the finite-dimensional case has been given by Pardoux and
R\u{a}\c{s}canu in \cite[{\S}4.2]{pardoux-rascanu}, where the authors
consider stochastic differential equations in $\erre^n$ of the type
\[ 
dX_t + A(X_t)\,dt + F(t,X_t)\,dt \ni G(t, X_t)\,dB_t,
\]
where $A$ is a (multivalued) maximal monotone operator whose domain
has non-emtpy interior, $B$ is a $k$-dimensional Wiener process, $G$
satisfies standard Lipschitz continuity assumptions, and $F(t,\cdot)$
is continuous and monotone (not necessarily Lipschitz
continuous). While the assumptions on $A$ are not restrictive in
finite dimensions, unbounded linear operators generating contraction
semigroups in infinite-dimensional spaces, as in our case, have dense
domain, whose interior is hence empty.}

On the other hand, in the deterministic setting complete results have
long been known for equations of the type
\[
\frac{du}{dt} + Au \ni f, \qquad u(0)=u_0,
\]
even in the much more general setting where $A$ is a (multivalued)
$m$-accretive operator on a Banach space $E$ and $f \in L^1(0,T;E)$
(see, e.g., \cite{barbu, Bmax}).
Although a solution to the general stochastic problem does not
currently seem within reach, significant results have been obtained in
special cases: apart of the above-mentioned works on semilinear
equations, well-posedness for the stochastic porous media equation
under fairly general assumptions is known (see \cite{BDPR-porous},
where the same hypotheses on $\beta$ imposed here are used and the
noise is assumed to satisfy suitable boundedness conditions, and
\cite{cm:IDAQP09} for an extension to jump noise). Moreover, the
variational theory by Pardoux, Krylov and Rozovski\u{\i} is
essentially as complete as the corresponding deterministic theory. As
mentioned above, however, large classes of maximal monotone operators
on $H=L^2(D)$ cannot be cast in the variational framework.

The main contribution of this work is a well-posedness result for
\eqref{eq:0} under the most general conditions known so far, to the
best of our knowledge. These conditions are quite sharp for $A$, but
not for $\beta$. In particular, the conditions on $A$ are close to
those needed to show that $A+\beta(\cdot)$ is maximal monotone on
$L^2(D)$, but the hypothesis that $\beta$ is finite on the whole real
line is not needed in the deterministic theory. Finally, the
conditions on $B$ are the natural ones to have function-valued noise,
and are in this sense as general as possible. Equations with white
noise in space and time, that have received much attention lately, are
not within the scope of our approach (nor of others, most likely,
under such general conditions on $\beta$).

In forthcoming work we shall extend our well-posedness results to
equations where $A$ is a nonlinear operator satisfying suitable
Leray-Lions conditions (thus including the $p$-Laplacian, for instance),
as well as to equations driven by discontinuous noise.

Let us now briefly outline the structure of the paper and the main
ideas of the proof. Section \ref{sec:results} contains the statement
of the main well-posedness result, and in Section \ref{sec:ex} we
discuss the hypotheses on the drift and diffusion coefficients,
providing corresponding examples. After collecting useful
preliminaries in Section \ref{sec:prelim}, we consider in Section
\ref{sec:reg} a version of equation \eqref{eq:0} with additive noise
satisfying a strong boundedness assumption. Using the Yosida
regularization of $\beta$, we obtain a family of approximating
equations with Lipschitz coefficients, which can be treated by the
standard variational theory. The solutions to such equations are shown
to satisfy suitable uniform estimates, both pathwise and in
expectation. Such estimates allow us to obtain key regularity and
integrability properties for the solution to the equation with
additive bounded noise. A crucial role is played by Simon's
compactness criterion, which is applied pathwise, and by compactness
criteria in $L^1$ spaces, applied both pathwise and in expectation. It
is, in essence, precisely this interplay between pathwise and
``averaged'' arguments that permits to avoid many restrictive
hypotheses of the existing literature. An abstract version of Jensen's
inequality for positive operators, combined with the lower
semicontinuity of convex integrals, is also an essential tool. In
Section \ref{sec:add} we prove well-posedness for equations with
additive noise removing the boundedness assumption of the previous
section. This is accomplished by a further regularization scheme, this
time on the diffusion operator $B$, and by a priori estimates for
solutions to the regularized equations. A key role is played again by
a combination of estimates and passages to the limit both pathwise and
in expectation. We also prove continuity of the solution map with
respect to the initial datum and the diffusion coefficient, by means
of It\^o's formula and regularizations, for which smoothing properties
of the resolvent of $A$ are essential.  Finally, in Section
\ref{sec:pf} we obtain well-posedness in the general case by a
fixed-point argument, using the Lipschitz continuity of $B$
only. Introducing weighted spaces of stochastic processes, we obtain
directly global well-posedness, thus avoiding a tedious construction
by ``patching'' local solutions.

Some tools and reasonings used in this work are obviously not new:
weak compactness arguments in $L^1$, for instance, are extensively
used in the literature on partial differential equations (see, e.g.,
\cite{BoCro, Bre-mm} and references therein), as well as, to a lesser
extent, in the stochastic setting (cf.~\cite{Barbu:Lincei,
  BDPR-porous, cm:SIMA12}).  However, even where similarities are
present, our arguments are considerably streamlined and more
general. The pathwise application of Simon's compactness criterion,
made possible by a construction based on the variational framework,
seems to be new, at least in the context of stochastic evolution
equations. It is in fact somewhat surprising that the
variational setting, which notoriously fails when dealing with
semilinear equations, is at a basis of an approach that leads to
well-posedness of those same equations, even with singular and rapidly
increasing drift.

\medskip

\noindent
\textbf{Acknowledgments.} The authors are partially supported by a
grant of The Royal Society. The first-named author is very grateful to
Prof.~S.~Albeverio for the warm hospitality and the excellent working
conditions at the Interdisziplin\"ares Zentrum f\"ur Komplexe Systeme,
University of Bonn, where parts of this work were written. Two
anonymous referees provided useful comments and suggestions that led
to a better presentation of our results.


\ifbozza\newpage\else\fi
\section{Main result}
\label{sec:results}
In this section, after fixing notation and conventions used throughout
the paper, we state our main result.

\subsection{Notation}
All functional spaces will be defined on a smooth bounded domain
$D \subset \erre^n$. We shall denote $L^2(D)$ by $H$ and its inner
product by $\ip{\cdot}{\cdot}$. The domain and the range of a generic
map $G$ will be denoted by $\dom(G)$ and $\mathsf{R}(G)$,
respectively. If $E$ and $F$ are subsets of a topological space, we
shall write $E \embed F$ to mean that $E$ is continuously embedded in
$F$, i.e. that $E$ is a subset of $F$ and that the injection
$i:E \to F$ is continuous. Let $E$, $F$ be Banach spaces.  \luca{The
  space of linear continuous operators from $E$ to $F$ is denoted by
  $\cL(E,F)$ if endowed with the operator norm, and by $\cL_s(E,F)$ if
  endowed with the strong operator topology, that is, $T_n \to T$ in
  $\cL_s(E,F)$ if $T_nu \to Tu$ in $F$ for all $u \in E$.  If
  $F=\erre$, $\cL(E,\erre)$ is the dual space $E^*$.}  If $E$ and $F$
are Hilbert spaces, we shall denote the space of Hilbert-Schmidt
operators from $E$ to $F$ by $\cL^2(E,F)$.

We shall occasionally use the symbols $\wto$ and
$\xrightharpoonup{*}$ to denote convergence in the weak and weak*
topology of Banach spaces, respectively, while the symbol $\to$ is
reserved for convergence in the norm topology.

All random quantities will be defined on a fixed probability space
$(\Omega,\cF,\P)$ endowed with a right-continuous and saturated
filtration $\mathbb{F}:=(\cF_t)_{t\in[0,T]}$, where $T$ is a positive
number. All expressions involving random quantities are meant to hold
$\P$-almost surely, unless otherwise stated. With $W$ we shall denote
a cylindrical Wiener process on a separable Hilbert space $U$, that
may coincide with $H$, but does not have to. We shall use the standard
notation of stochastic calculus, such as $K \cdot W$ to mean the
stochastic integral of $K$ with respect to $W$, and, for a process $X$
taking values in a normed space $E$,
$X^*_t := \operatorname{ess\,sup}_{s\in[0,t]} \norm{X(s)}_E$.

Let $E$ be a separable Banach space. Given a measure space
$(Y,\mathscr{A},\mu)$ and $p \in [1,\infty]$, we shall denote the
space of strongly measurable functions from $\phi:Y \to E$ such that
$\norm{\phi}_E \in L^p(Y)$ by $L^p(Y;E)$. Moreover, we shall write
$L^2(\Omega;L^\infty(0,T;E))$ to denote the space of $\mathscr{F}
\otimes \mathscr{B}([0,T])$-measurable processes $\phi:\Omega \times
[0,T] \to E$ such that
\[
\norm[\big]{\phi}_{L^2(\Omega;L^\infty(0,T;E))} :=
\Bigl( \E \operatorname*{ess\,sup}_{t\in[0,T]} \norm{\phi(t)}^2_E
\Bigr)^{1/2} < \infty.
\]
Given an interval $I \subseteq \erre$, the space of continuous and of
weakly continuous functions from $I$ to $E$ will be denoted by
$C(I;E)$ and $C_w(I;E)$, respectively.

We shall write $a \lesssim b$ to mean that there exists a constant $N$
such that $a \leq Nb$. If such a constant depends on certain
parameters of interest, we shall put these in parentheses or write
them as subscripts.

\subsection{Assumptions}
The following assumptions on the data of the problem are assumed to be
in force throughout and will not always be recalled explicitly.
\smallskip\par\noindent
\textbf{Assumption A.} Let $V$ be Hilbert space that is densely,
continuously, and compactly embedded in $H$. The linear operator $A$
belongs to $\cL(V,V^*)$ and satisfies the following properties:
\begin{itemize}
\item[(i)] there exists $C>0$ such that
  \[
  \ip{Av}{v} \geq C \norm{v}_V^2 \qquad \forall v \in V;
  \]
\item[(ii)] the part of $A$ in $H$ admits a unique $m$-accretive
  extension $A_1$ in $L^1(D)$;
\item[(iii)] the resolvent
  $\bigl((I+\lambda A_1)^{-1}\bigr)_{\lambda>0}$ is sub-Markovian;
\item[(iv)] there exists $m \in \mathbb{N}$ such that
  \[
  \norm[\big]{(I+A_1)^{-m}}_{\cL(L^1(D),L^\infty(D))} < \infty.
  \]
\end{itemize}
\smallskip\par\noindent
Here we have used $\ip{\cdot}{\cdot}$ also to denote the duality
pairing of $V$ and $V^*$, which is compatible with the scalar product
in $H$. In fact, identifying $H$ with its dual, one has the so-called
Gel{\textquotesingle}fand triple
\[
V \embed H \embed V^*,
\]
where both embeddings are dense (see, e.g.,
\cite[{\S}2.9]{LiMa1}). 
\luca{Moreover, we recall that the part of $A$ in
$H$ is the operator
$A_2$ on $H$ defined as
$\dom(A_2):=\{x\in V: Au\in H\}$ and
$A_2x:= Ax$ for all $x\in \dom(A_2)$.
If one identifies the operators with their graphs,
this is equivalent to setting $A_2:=A\cap(V\times H)$.}
We shall often refer
to condition (i) as the coercivity of $A$. The sub-Markovianity
condition (iii) amounts to saying that, for all functions
$f \in L^1(D)$ such that $0 \leq f \leq 1$, one has
\[
0 \leq (I+A_1)^{-1}f \leq 1.
\]
In other words, $(I+A_1)^{-1}$ is positivity preserving and contracting
in $L^\infty(D)$.

From Section \ref{sec:reg} onwards, we shall often use the symbol $A$
to denote also $A_1$ and $A_2$.

Let us observe that if $A$ is the negative Laplacian with
Dirichlet boundary conditions, all hypotheses are met. Much wider
classes of operators satisfying hypotheses (i)-(iv) will be given below.

\medskip

\noindent\textbf{Assumption B.} $\beta$ is a maximal monotone graph of
$\erre \times \erre$ such that $\dom(\beta)=\erre$, $0 \in \beta(0)$,
and its potential $j$ is even.
\smallskip\par\noindent
We recall that the potential $j$ of $\beta$ is the convex, proper,
lower semicontinuous function $j: \erre \to \erre_+$, with $j(0)=0$,
such that $\partial j = \beta$, where $\partial$ stands for the
subdifferential in the sense of convex analysis.\footnote{See
  {\S}\ref{ssec:cvx} below for a summary of the notions of convex
  analysis and of the theory of nonlinear monotone operators used
  throughout.}

\medskip

\noindent\textbf{Assumption C.} The diffusion coefficient
\[
B:\Omega \times[0,T] \times H \to \cL^2(U,H)
\]
is Lipschitz continuous and grows linearly in its third argument,
uniformly over $\Omega \times [0,T]$, i.e., there exist constants
$L_B$, $N_B$ such that
\begin{align*}
\norm[\big]{B(\omega,t,x) - B(\omega,t,y)}_{\cL^2(U,H)} 
&\leq L_B \norm{x-y}_H,\\
\norm[\big]{B(\omega,t,x)}_{\cL^2(U,H)}
&\leq N_B \bigl(1 + \norm{x}_H \bigr)
\end{align*}
for all $\omega \in \Omega$, $t \in [0,T]$, and $x$, $y \in H$.
Moreover, $B(\cdot,\cdot,x)$ is progressively measurable for all
$x \in H$, i.e., for all $t \in [0,T]$, the map
$(\omega,s) \mapsto B(\omega,s,x)$ from $\Omega \times [0,t]$, endowed
with the $\sigma$-algebra $\cF_t \otimes \mathscr{B}([0,t])$, to
$\cL^2(U,H)$, endowed with its Borel $\sigma$-algebra, is strongly
measurable. We recall that, since $U$ and $H$ are separable, the space
of Hilbert-Schmidt operators $\cL^2(U,H)$ is itself a separable
Hilbert space, hence strong and weak measurability coincide. Whenever
we deal with maps with values in separable Banach spaces, since strong
and weak measurability coincide, we shall drop the qualifier
``strong''.

\subsection{The well-posedness result}
\begin{defi}
  Let $X_0$ be an $H$-valued $\cF_0$-measurable random variable. A
  \emph{strong solution} to the stochastic equation \eqref{eq:0} is a
  \luca{pair} $(X,\xi)$ satisfying the following properties:
  \begin{itemize}
  \item[\emph{(i)}] $X$ is a measurable adapted $V$-valued
    process such that $AX \in L^1(0,T;V^*)$ and
    $B(\cdot,X) \in L^2(0,T;\cL^2(U,H))$;
  \item[\emph{(ii)}] $\xi$ is a measurable adapted
    $L^1(D)$-valued process such that $\xi \in L^1(0,T;L^1(D))$ and
    $\xi \in \beta(X)$ almost everywhere in $(0,T) \times D$;
  \item[\emph{(iii)}] one has, as an equality in $L^1(D) \cap V^*$,
   \[
   X(t)+\int_0^t{AX(s)\,ds}+\int_0^t{\xi(s)\,ds} = X_0
   +\int_0^t B(s,X(s))\,dW(s)
    \]
   for all $t \in [0,T]$.
  \end{itemize}
\end{defi}
Note that $L^1(D) \cap V^*$ is not empty because $D$ has
finite Lebesgue measure, hence, for instance, $H$ is contained in both
spaces.

\smallskip

Let us denote by $\mathscr{J}$ the set of pairs $(\phi,\zeta)$,
where $\phi$ and $\zeta$ are measurable adapted processes
with values in $H$ and $L^1(D)$, respectively, such that
\begin{align*}
\phi &\in L^2(\Omega;L^\infty(0,T;H)) \cap L^2(\Omega;L^2(0,T;V)),\\
\zeta &\in L^1(\Omega \times [0,T] \times D),\\
j(\phi) + j^*(\zeta) &\in L^1(\Omega \times [0,T] \times D).
\end{align*}
We shall say that \eqref{eq:0} is well posed in $\mathscr{J}$ \luca{if there
exists a unique process in $\mathscr{J}$ which is a strong solution and} such that
the solution map $X_0 \mapsto X$ is continuous from $L^2(\Omega;H)$ to
$L^2(\Omega;L^\infty(0,T;H)) \cap L^2(\Omega;L^2(0,T;V))$.

\smallskip

The central result of this work is the following.
\begin{thm}  \label{th:main}
  Let $X_0 \in L^2(\Omega,\cF_0,\P;H)$. Then \eqref{eq:0} is well-posed
  in $\mathscr{J}$. \luca{Moreover, 
  the solution map $X_0 \mapsto X$ is Lipschitz continuous and the
  paths of $X$ are weakly continuous with values in $H$.
  }
\end{thm}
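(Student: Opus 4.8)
The plan is to build the solution for the general problem \eqref{eq:0} out of the well-posedness theory for the equation with additive noise, which by the structure outlined in the introduction has been established by Section \ref{sec:add}. Concretely, I would fix $X_0 \in L^2(\Omega,\cF_0,\P;H)$ and, for each process $Y$ in the target space $\mathscr{Z} := L^2(\Omega;L^\infty(0,T;H)) \cap L^2(\Omega;L^2(0,T;V))$, consider the equation with \emph{additive} noise obtained by freezing the diffusion coefficient, i.e. with $B(\cdot,\cdot)$ replaced by the (now $X$-independent, but adapted) operator $t \mapsto B(t,Y(t))$. Assumption C guarantees that this frozen coefficient lies in $L^2(\Omega;L^2(0,T;\cL^2(U,H)))$ and is progressively measurable, hence the additive-noise theory applies and produces a unique strong solution $(X,\xi) \in \mathscr{J}$. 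This defines a map $\Gamma : Y \mapsto X$ on $\mathscr{Z}$, and any fixed point of $\Gamma$ is a strong solution of \eqref{eq:0} in $\mathscr{J}$; conversely any strong solution in $\mathscr{J}$ is such a fixed point, so uniqueness for \eqref{eq:0} will follow from uniqueness of the fixed point.

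The heart of the argument is then a contraction estimate for $\Gamma$. Given $Y_1, Y_2 \in \mathscr{Z}$ with corresponding outputs $X_i = \Gamma(Y_i)$ and selections $\xi_i$, I would subtract the two equations and test against $X_1 - X_2$, using It\^o's formula (as already justified in Section \ref{sec:add}, via regularization of $A$ and its resolvent smoothing) for $\norm{X_1(t)-X_2(t)}_H^2$. The coercivity of $A$ (Assumption A(i)) controls the $\int_0^t \norm{X_1-X_2}_V^2$ term from below, the monotonicity of $\beta$ makes the term $\int_0^t \ip{\xi_1-\xi_2}{X_1-X_2}$ nonnegative and hence harmless, and the stochastic term has zero expectation once localized; the Lipschitz bound on $B$ turns the It\^o correction into $L_B^2 \int_0^t \norm{Y_1-Y_2}_H^2$. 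After taking suprema in $t$, using the Burkholder--Davis--Gundy inequality on the martingale part, and absorbing, one arrives at an estimate of the form
\[
  \norm{X_1 - X_2}_{\mathscr{Z}_t}^2 \lesssim \int_0^t \norm{Y_1 - Y_2}_{L^2(\Omega;H)}^2(s)\,ds,
\]
where $\mathscr{Z}_t$ denotes the analogous space on $[0,t]$. Rather than iterating on short intervals and patching, I would follow the route announced in the introduction: introduce on $\mathscr{Z}$ the equivalent family of weighted norms
\[
  \norm{Y}_{\mathscr{Z},\lambda}^2 := \E \operatorname*{ess\,sup}_{t\in[0,T]} e^{-\lambda t}\norm{Y(t)}_H^2
    + \E\int_0^T e^{-\lambda t}\norm{Y(t)}_V^2\,dt,
\]
and check that for $\lambda$ large enough $\Gamma$ becomes a strict contraction in $\norm{\cdot}_{\mathscr{Z},\lambda}$; the Banach fixed point theorem then yields directly a global solution and its uniqueness.

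For the final two assertions, Lipschitz continuity of $X_0 \mapsto X$ follows from essentially the same It\^o computation applied to two solutions with different initial data (now there is no frozen argument: both the $B$-difference and the data difference appear on the right-hand side), again combined with the weighted-norm/Gronwall argument to get a global Lipschitz constant; this also re-proves uniqueness. Weak continuity of the paths of $X$ with values in $H$ is inherited from the additive-noise theory: the fixed point $X$ solves an additive-noise equation with coefficient $B(\cdot,X(\cdot))$, for which the corresponding regularity statement in Section \ref{sec:add} already gives $X \in C_w([0,T];H)$ almost surely. I expect the main obstacle to be purely technical rather than conceptual: justifying the It\^o formula for $\norm{X_1-X_2}_H^2$ at the level of regularity available for strong solutions in $\mathscr{J}$ (where $AX \in L^1(0,T;V^*)$ only, and $\xi \in L^1$), which is why the resolvent-smoothing regularization of $A$ developed earlier is invoked, together with a careful localization to handle the a priori merely integrable stochastic integrals before passing to expectations.
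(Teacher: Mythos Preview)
Your overall strategy matches the paper's exactly: freeze $Y$, apply the additive-noise theory of Section~\ref{sec:add} (in particular Proposition~\ref{prop:contra}, which already contains the resolvent-regularized It\^o argument you describe), and then run a Banach fixed-point argument with an exponentially weighted norm. The Lipschitz dependence on $X_0$ and the weak continuity in $H$ are obtained just as you say.

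There is, however, a genuine issue with the specific weighted norm you choose on $\mathscr{Z}=L^2(\Omega;L^\infty(0,T;H))\cap L^2(\Omega;L^2(0,T;V))$. After the weighted It\^o estimate one gets
\[
\E\sup_{t}e^{-\lambda t}\norm{X_1(t)-X_2(t)}_H^2
+\E\!\int_0^T e^{-\lambda s}\norm{X_1-X_2}_V^2\,ds
\;\lesssim\;
\E\!\int_0^T e^{-\lambda s}\norm{Y_1(s)-Y_2(s)}_H^2\,ds,
\]
but the right-hand side is only bounded by $T\,\E\sup_s e^{-\lambda s}\norm{Y_1-Y_2}_H^2$ (or by the embedding constant times $\E\int e^{-\lambda s}\norm{Y_1-Y_2}_V^2$), and neither factor becomes small as $\lambda\to\infty$. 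So $\Gamma$ is \emph{not} a contraction in your $\norm{\cdot}_{\mathscr{Z},\lambda}$ for large $\lambda$; the smallness has nowhere to come from.

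The paper avoids this by keeping the term $\lambda\E\int_0^T e^{-\lambda s}\norm{X_1-X_2}_H^2\,ds$ produced by the weight and running the fixed point on $L^2(\Omega;L^2_\alpha(0,T;H))$: then the estimate reads
\[
\sqrt{\alpha}\,\norm{X_1-X_2}_{L^2(\Omega;L^2_\alpha(0,T;H))}
\lesssim
\norm{Y_1-Y_2}_{L^2(\Omega;L^2_\alpha(0,T;H))},
\]
which \emph{does} contract for $\alpha$ large. Once the fixed point $X$ is found there, one plugs $Y_1=Y_2=X$ back into the full estimate (your inequality on $\mathscr{Z}$, or the paper's $F_\alpha$ estimate) to recover the $L^2(\Omega;L^\infty(0,T;H))\cap L^2(\Omega;L^2(0,T;V))$ bounds and the Lipschitz dependence on $X_0$. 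Equivalently, you can stay on $\mathscr{Z}$ but add $\lambda\,\E\int_0^T e^{-\lambda s}\norm{\cdot}_H^2\,ds$ to your weighted norm; that modified norm is still equivalent to the original $\mathscr{Z}$-norm and now yields a contraction.
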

\smallskip

\luca{Let us stress the fact that} the more general problem of unconditional well-posedness
(i.e. without the extra condition that strong solutions belong to
$\mathscr{J}$) remains open and is beyond the scope of the techniques
used in this work. In particular, we can only prove uniqueness of
solutions within $\mathscr{J}$.

\section{Examples and remarks}
\label{sec:ex}
Some comments and examples on the assumptions on the data of the
problem are in order. In particular, the hypotheses on $A$ deserve
special attention. The coercivity condition
$\ip{Av}{v} \geq C\norm{v}_V^2$ for all $v \in V$ is equivalent to
$A \in \cL(V,V^*)$ being determined by a bounded
$V$-elliptic\footnote{We prefer this terminology, taken from
  \cite{LiMa1}, over the currently more common ``$V$-coercive'', to
  avoid possible confusion with related terminology used in the theory
  of Dirichlet forms, where coercivity is meant in a somewhat
  different sense (cf.~\cite[Definition~2.4, p.~16]{MR:DF}).}
bilinear form $\cE:V \times V \to \erre$, i.e. such that
\[
\abs{\cE(u,v)} \lesssim \norm{u}_V \norm{v}_V, \qquad
\cE(v,v) \geq C \norm{v}_V^2 \qquad \forall u,v \in V.
\]
This is an immediate consequence of the Lax-Milgram theorem, which
also implies that $A$ is an isomorphism between $V$ and $V^*$ (see,
e.g., \cite[{\S}5.2]{ISEM18} or \cite[Lemma~1.3]{Ouhabaz}).

The bilinear form $\cE$ can also be seen as a closed unbounded form on
$H$ with domain $V$. This defines a (unique) linear $m$-accretive
operator $A_2$ on $H$, that is nothing else than the part of $A$ in
$H$ (see, e.g., \cite[{\S}5.3]{ISEM18} or \cite[p.~34]{Ouhabaz}).
Conversely, given a positive closed bilinear form $\cE$ on $H$ with
dense domain $\dom(\cE)$ satisfying the \emph{strong sector
  condition}\footnote{Throughout this section we shall follow the
  terminology on Dirichlet forms of \cite{MR:DF}.}
\[
\abs{\cE(u,v)} \lesssim \cE(u,u)^{1/2} \cE(v,v)^{1/2} 
\qquad \forall u, v \in \dom(\cE),
\]
and such that $\cE(u,u) > 0$ for all $u \in \dom(\cE)$, $u \neq 0$, 
setting $V:=\dom(\cE)$ with inner product given by the symmetric part
$\cE^s$ of $\cE$, that is
\[
\cE^s(u,v) := \frac12 \bigl( \cE(u,v) + \cE(v,u) \bigr), 
\qquad u, v \in \dom(\cE),
\]
there is a unique linear operator $A \in \cL(V,V^*)$ such that
$\cE(u,v)=\ip{Au}{v}$ for all $u, v \in V$. This amounts to trivial
verifications, since, obviously, $\cE(u,u)=\cE^s(u,u)$ for all
$u \in \dom(\cE)$.
As a particular case, let $A'$ be a linear positive self-adjoint
(unbounded) operator $H$ such that $\ip{A'u}{u}>0$ for all $u \in
\dom(A)$, $u \neq 0$. Then $A'$ admits a square root $\sqrt{A'}$,
which is in turn a linear positive self-adjoint operator on $H$. One
can then define the Hilbert space $V:=\dom(\sqrt{A'})$, endowed with
the inner product
\[
\ip{u}{v}_V := \ip[\big]{\sqrt{A'}u}{\sqrt{A'}v},
\]
and the symmetric bounded bilinear form $\cE:V \times V \to \erre$,
\[
\cE(u,v) := \ip[\big]{\sqrt{A'}u}{\sqrt{A'}v}, \qquad u,v \in V,
\]
which is obviously $V$-elliptic. By a theorem of Kato
(\cite[Theorem~2.23, p.~331]{Kato}), there is in fact a bijective
correspondence between linear positive self-adjoint operators on $H$
and positive densely-defined closed symmetric bilinear forms.
More generally, if $A'$ is a linear (unbounded) $m$-accretive operator
on $H$ such that
\[
\abs[\big]{\ip{A'u}{v}} \lesssim \ip{A'u}{u}^{1/2} \ip{A'v}{v}^{1/2}
\qquad \forall u, v \in \dom(A'),
\]
and $\ip{A'u}{u}>0$ for all $u \in \dom(A')$, $u \neq 0$, then there
exists a (unique) closed $V$-elliptic bilinear form $\cE$ that
determines an operator $A \in \cL(V,V^*)$, with $V:=\dom(\cE)$ and
$\ip{\cdot}{\cdot}_V:=\cE^s$, such that $A'$ is the part on $H$ of
$A$. This follows, for instance, by \cite[p.~27]{MR:DF}.

Note, however, that in the previous examples $V$ may \emph{not} be
continuously embedded in $H$, unless $\cE$ satisfies a Poincar\'e
inequality, i.e. $\norm{u}_H^2 \lesssim \cE(u,u)$ for all
$u \in \dom(\cE)$ (as is the case, for instance, for the Dirichlet
Laplacian). This limitation is resolved by the following important
observation: all our well-posedness result continues to hold if we
assume, in place of hypothesis (i), the following weaker one:
\begin{itemize}
\item[(i')] there exist constants $C_1>0$, $C_2 \in \erre$ such that
\[
\ip{Av}{v} \geq C_1 \norm{v}_V^2 - C_2 \norm{v}_H^2
\qquad \forall v \in V,
\]
\end{itemize}
which is clearly equivalent to assuming that $\tilde{A}:=A+C_2I$ is
$V$-elliptic. Under this assumption, equation \eqref{eq:0} can
equivalently be written as
\[
dX(t) + \tilde{A}X(t)\,dt + \beta(X(t))\,dt = C_2X(t)\,dt +
B(t,X(t))\,dW(t).
\]
The only added complication in the proofs to follow would be the
appearance of functional spaces with an exponential weight in time,
very much as in the proof of Proposition \ref{prop:contra} below. An
analogous argument, in a slightly different context, is developed in
detail in \cite{cm:rd}. This seemingly trivial observation allows to
considerably extend the class of operators $A$ that can be
treated. For instance, one has the following criterion.
\begin{lemma}
  A coercive closed form $\cE$ on $H$ uniquely determines an
  operator $A$ satisfying \emph{(i')}.
\end{lemma}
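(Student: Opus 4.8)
The plan is to unwind the definition of a coercive closed form in the sense of \cite{MR:DF} and to realise $\cE$ as a bounded bilinear form on a suitable Hilbert space, from which the operator $A$ is produced by the Riesz representation theorem. Recall that, $(\cE,\dom(\cE))$ being a coercive closed form on $H$, its symmetric part $\cE^s(u,v) := \frac12\bigl(\cE(u,v)+\cE(v,u)\bigr)$ is a positive, densely defined, closed symmetric form on $H$, and $\cE$ satisfies the weak sector condition: there exists $K>0$ such that
\[
\abs{\cE_1(u,v)} \le K\, \cE_1(u,u)^{1/2}\,\cE_1(v,v)^{1/2}
\qquad \forall u,v\in\dom(\cE),
\]
where $\cE_1(u,v) := \cE(u,v) + \ip{u}{v}$.

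First I would fix the Hilbert space canonically associated to $\cE$: set $V := \dom(\cE)$, endowed with the inner product $\cE_1^s(u,v) := \cE^s(u,v)+\ip{u}{v}$, so that $\norm{u}_V^2 = \cE^s(u,u)+\norm{u}_H^2$. The closedness of $\cE^s$ is precisely the statement that $(V,\cE_1^s)$ is complete, hence a Hilbert space; density of $\dom(\cE)$ in $H$, together with the trivial bound $\norm{u}_H\le\norm{u}_V$, shows that $V$ is densely and continuously embedded in $H$, so that $V\embed H\embed V^*$ is a Gel{\textquotesingle}fand triple and condition (i') is meaningful.

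Next I would show that $\cE$ extends to a bounded bilinear form on $V\times V$. Since $\cE_1(u,u)=\cE^s(u,u)+\norm{u}_H^2=\norm{u}_V^2$, the weak sector condition becomes $\abs{\cE_1(u,v)}\le K\norm{u}_V\norm{v}_V$; combined with $\abs{\ip{u}{v}}\le\norm{u}_H\norm{v}_H\le\norm{u}_V\norm{v}_V$, this gives $\abs{\cE(u,v)}\le (K+1)\norm{u}_V\norm{v}_V$ for all $u,v\in V$. Hence for each $u\in V$ the functional $v\mapsto\cE(u,v)$ belongs to $V^*$, and the map $A\colon u\mapsto\cE(u,\cdot)$ is a well-defined element of $\cL(V,V^*)$ with $\ip{Au}{v}=\cE(u,v)$; it is the unique operator in $\cL(V,V^*)$ with this property, since the duality pairing between $V$ and $V^*$ is non-degenerate. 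Finally, using $\cE(v,v)=\cE^s(v,v)=\norm{v}_V^2-\norm{v}_H^2$, one gets $\ip{Av}{v}=\norm{v}_V^2-\norm{v}_H^2$ for all $v\in V$, which is exactly (i') with $C_1=C_2=1$.

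There is no real obstacle in this argument; it is essentially a dictionary between the language of coercive closed forms and that of Gel{\textquotesingle}fand triples. The only point requiring a little attention is the choice of the $V$-norm: one must take $\cE_1^s$ (rather than $\cE^s$, which need not be strictly positive), after which the weak sector condition yields the boundedness of $\cE$ automatically and the coercivity estimate (i') follows at once from the identity $\cE_1(v,v)=\norm{v}_V^2$.
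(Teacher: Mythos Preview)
Your argument is correct and follows essentially the same approach as the paper: define $V=\dom(\cE)$ with inner product $\ip{\cdot}{\cdot}_H+\cE^s$, use the weak sector condition on $\cE_1$ to obtain boundedness of $\cE$ on $V\times V$, and read off (i') with $C_1=C_2=1$ from $\cE(v,v)=\cE^s(v,v)=\norm{v}_V^2-\norm{v}_H^2$. The paper's proof is terser, simply referring back to the analogous construction under the strong sector condition, but the content is the same.
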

\begin{proof}
  The hypothesis of the Lemma means that $\cE$ is a densely defined
  bilinear form such that its symmetric part $\cE^s$ is closed and
  $\cE$ satisfies the \emph{weak sector condition}
  \[
  \abs[\big]{\cE_1(u,v)} \lesssim \cE_1(u,u)^{1/2} \cE_1(v,v)^{1/2}
  \qquad \forall u,v \in \dom(\cE),
  \]
  where $\cE_1:=\cE + I$. In other words, $\cE$ satisfies the weak
  sector condition if the shifted form $\cE + I$ satisfies the strong
  sector condition. Therefore, adapting in the obvious way an argument
  used above, it is enough to take $V:=\dom(\cE)$ with inner product
  $\ip{\cdot}{\cdot}_V:=\ip{\cdot}{\cdot}_H + \cE^s$ to obtain that
  the generator $A_2$ of $\cE$ can be (uniquely) extended to an
  operator $A \in \cL(V,V^*)$ satisfying (i') with $C_1=C_2=1$.
\end{proof}
Note that in all the above constructions one has $V \embed H$ densely
and continuously (under appropriate assumptions), but the embedding is
\emph{not} necessarily compact. The latter condition has to be proved
depending on the situation at hand. For a general compactness
criterion in terms of ultracontractivity properties, see Proposition
\ref{prop:cpt} below.

\smallskip

As regards condition (ii), the simplest sufficient condition ensuring
that $A_2$ admits an $m$-accretive extension $A_1$ in $L^1(D)$ is that
$-A_2$ is the generator of a symmetric Markovian semigroup of
contractions $S_2$ on $H$, or, equivalently, that $A_2$ is positive
self-adjoint with a Markovian resolvent. In fact, this implies that,
for any $p \in \mathopen[1,\infty\mathclose[$, there exists a (unique)
symmetric Markovian semigroup of contractions $S_p$ on $L^p(D)$ such
that all $S_p$, $1 \leq p < \infty$, are consistent, hence the
corresponding negative generators $A_p$ coincide on the intersections
of their domains (see, e.g., \cite[Theorem~1.4.1]{Dav-HK}). In the
general case, i.e. if $A_2$ is not self-adjoint, the same conclusion
remains true if the semigroup $S_2$ and its adjoint $S_2^*$ are both
sub-Markovian, or, equivalently, if $S_2$ is sub-Markovian and
$L^1$-contracting (cf.~ \cite[Lemma~10.13 and Theorem~10.15]{ISEM18}
or \cite[Corollary~2.16]{Ouhabaz}). In particular, if $A_2$ is the
generator of a Dirichlet form on $H$, these conclusions
hold. Moreover, since the resolvent of $A_1$ is sub-Markovian if and
only if the resolvent of $A_2$ is sub-Markovian, we obtain the
following complement to the previous Lemma.
\begin{lemma}
 A Dirichlet form $\cE$ on $H$ uniquely determines an operator $A$ satisfying
 \emph{(i')}, \emph{(ii)}, and \emph{(iii)}.
\end{lemma}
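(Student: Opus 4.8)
The plan is to verify conditions \emph{(i')}, \emph{(ii)}, \emph{(iii)} one at a time, exploiting the constructions and references already assembled above. Since a Dirichlet form $\cE$ on $H$ is in particular a coercive closed form, the previous lemma will immediately supply a unique operator $A \in \cL(V,V^*)$, with $V = \dom(\cE)$ and $\ip{\cdot}{\cdot}_V := \ip{\cdot}{\cdot}_H + \cE^s$, that satisfies \emph{(i')} (with $C_1 = C_2 = 1$) and whose part in $H$ is the generator $A_2$ of $\cE$. Thus only \emph{(ii)} and \emph{(iii)} will require work.

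For \emph{(ii)} I would first observe that, $\cE$ being a (not necessarily symmetric) Dirichlet form, both the semigroup $S_2$ generated by $-A_2$ and its adjoint $S_2^*$ are sub-Markovian --- the latter because the dual form $\widehat{\cE}(u,v) := \cE(v,u)$ is again a Dirichlet form on $H$ --- or, equivalently, $S_2$ is sub-Markovian and $L^1$-contracting. The extrapolation theory for sub-Markovian semigroups, in the precise form recalled just before the statement (see \cite[Corollary~2.16]{Ouhabaz} or \cite[Lemma~10.13 and Theorem~10.15]{ISEM18}), then produces a consistent strongly continuous contraction semigroup $S_1$ on $L^1(D)$ whose negative generator $A_1$ is $m$-accretive and extends $A_2$. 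For uniqueness I would argue directly: if $B$ is any $m$-accretive extension of $A_2$ in $L^1(D)$, then for every $f \in H$ the element $v := (I+A_2)^{-1}f$ lies in $\dom(A_2) \subseteq \dom(B)$ and satisfies $(I+B)v = v + A_2 v = f$, so injectivity of $I+B$ gives $(I+B)^{-1}f = (I+A_2)^{-1}f$; since $(I+B)^{-1}$ is continuous on $L^1(D)$ and $H = L^2(D)$ is dense in $L^1(D)$, this pins down $(I+B)^{-1}$, hence $B$.

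For \emph{(iii)} I would use that sub-Markovianity of $S_2$ is equivalent to sub-Markovianity of the resolvent family $\bigl((I+\lambda A_2)^{-1}\bigr)_{\lambda>0}$ of $A_2$ on $H$ (note that $(I+\lambda A_2)^{-1} = \alpha(\alpha+A_2)^{-1}$ with $\alpha = 1/\lambda$), together with the fact, already recalled above, that the resolvent of $A_1$ is sub-Markovian if and only if that of $A_2$ is; equivalently, $(I+\lambda A_1)^{-1}$ is the $L^1(D)$-continuous extension of $(I+\lambda A_2)^{-1}$, which is positivity preserving and $L^\infty$-contracting on the dense subspace $L^1(D) \cap L^\infty(D)$, hence so is its extension. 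The only genuinely nontrivial point is the extrapolation step underlying \emph{(ii)}--\emph{(iii)}: passing from the $L^2$-theory of the (possibly non-symmetric) Dirichlet form to a consistent contraction semigroup on $L^1(D)$. This is exactly where one needs both $S_2$ and $S_2^*$ to be sub-Markovian, which is automatic for Dirichlet forms, and it is covered by the cited references; the reduction to the previous lemma, the uniqueness argument, and the sub-Markovianity of the $L^1$-resolvent are all routine.
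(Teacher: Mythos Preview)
Your proposal is correct and follows essentially the same route as the paper, which does not give a separate proof but derives the lemma from the discussion immediately preceding it: \emph{(i')} via the previous lemma on coercive closed forms, \emph{(ii)} via the extrapolation of the sub-Markovian semigroup $S_2$ (and its adjoint) to $L^1(D)$, and \emph{(iii)} via the equivalence of sub-Markovianity of the $L^1$- and $L^2$-resolvents. Your explicit uniqueness argument for the $m$-accretive extension $A_1$ is a welcome addition that the paper leaves implicit.
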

Without assuming that $S_2^*$ is sub-Markovian (which is the case, for
instance, if $A$ is determined by a semi-Dirichlet form on $H$, so
that (i') and (iii) only are satisfied), we note that $D(A_2)$ is dense in
$L^1(D)$, and the image of $I+A_2$ is dense in $L^1(D)$: the former
assertion follows by $D(A_2) \subset L^2(D)$ densely and
$L^2(D) \subset L^1(D)$ densely and continuously. Moreover, since
$A_2$ generates a contraction semigroup in $L^2(D)$, the
Lumer-Phillips theorem (see, e.g., \cite[p.~83]{EnNa}) implies that
$\mathsf{R}(I+A_2)=L^2(D)$, hence $\mathsf{R}(I+A_2)$ is dense in
$L^1(D)$. The Lumer-Phillips theorem again guarantees that the closure
of $A_2$ in $L^1(D)$ is $m$-accretive if $A_2$ is accretive in
$L^1(D)$. The latter property is often not difficult to verify in
concrete examples.

\smallskip

The most delicate condition is (iv), i.e. the ultracontractivity of
suitable powers of the resolvent of $A_1$. If $A_2$ is self-adjoint, a
simple duality arguments shows that, for any $t \geq 0$,
\[
\norm[\big]{S_2(t)}_{\cL(L^1,L^\infty)} \leq 
\norm[\big]{S_2(t/2)}^2_{\cL(L^2,L^\infty)}.
\]
Sufficient conditions for $S_2(t)$ to be bounded from $L^2(D)$ to
$L^\infty(D)$ are known in terms, for instance, of logarithmic Sobolev
inequalities, Sobolev inequalities, and Nash inequalities (see, e.g.,
\cite[Chapter ~2]{Dav-HK} and \cite[Chapter~6]{Ouhabaz}). The
non-symmetric case is more difficult, but ultracontractivity estimates
are known in many special cases, such as in the examples that we are
going to discuss next. Ultracontractivity estimates for powers of the
resolvent can then be obtained from estimates for the semigroup, as
explained below. The following result (probably known, but for which
we could not find a reference) shows that hypothesis (iv) guarantees
that the embedding $\dom(\cE) \embed H$ is compact, thus answering a
question left open above.
\begin{prop}  \label{prop:cpt}
  Let $A_2$ be the generator of a closed coercive form $\cE$ in
  $H$. If there exists $m \in \mathbb{N}$ such that the $m$-th power
  of the resolvent of $A_2$ is bounded from $L^2(D)$ to $L^\infty(D)$,
  then $\dom(\cE)$ is compactly embedded in $H$.
\end{prop}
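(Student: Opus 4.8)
The plan is to reduce the statement, in two steps, to the compactness of the resolvent of $A_2$ on $H$, and then to deduce the embedding by a standard approximation argument. The only structural fact I shall use beyond the stated hypotheses is that, since $\cE$ is a closed coercive (hence sectorial, up to a shift) form, $A_2$ generates an analytic semigroup $(S(t))_{t\ge0}$ on $H$. I write $G_\nu:=(\nu I+A_2)^{-1}$ for $\nu$ in the resolvent set, and let $\nu_0$ be such that $G_{\nu_0}^{\,m}=(\nu_0 I+A_2)^{-m}\in\cL(L^2(D),L^\infty(D))$, as provided by the hypothesis.

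First I would observe that $G_{\nu_0}^{\,m}$ is compact on $H$. This uses only $\abs{D}<\infty$: a bounded operator $T\colon L^2(D)\to L^\infty(D)$ is an integral operator whose kernel $K$ satisfies $\operatorname*{ess\,sup}_{x\in D}\norm{K(x,\cdot)}_{L^2(D)}\le\norm{T}_{\cL(L^2,L^\infty)}$ (the classical identification $\cL(L^2(D),L^\infty(D))\cong L^\infty(D;L^2(D))$), whence $K\in L^2(D\times D)$ because $D$ has finite measure, so $T$ is Hilbert--Schmidt, hence compact, on $L^2(D)=H$; apply this with $T=G_{\nu_0}^{\,m}$. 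Next, by analyticity, for every $t>0$ one has $S(t)H\subseteq\dom(A_2^m)$ with $(\nu_0 I+A_2)^mS(t)\in\cL(H)$ (the usual smoothing bounds $\norm{A_2^kS(t)}_{\cL(H)}\lesssim_k t^{-k}$ on compact subintervals of $(0,\infty)$), so that $S(t)=G_{\nu_0}^{\,m}\,\bigl[(\nu_0 I+A_2)^mS(t)\bigr]$ is the composition of a compact and a bounded operator; hence $S(t)$ is compact for every $t>0$. Consequently, for $\operatorname{Re}\lambda$ large, $G_\lambda=\int_0^\infty e^{-\lambda t}S(t)\,dt$ is an operator-norm-convergent Bochner integral and a norm-limit of Riemann sums over $[\varepsilon,R]\subset(0,\infty)$ of compact operators (the contributions of $(0,\varepsilon)$ and $(R,\infty)$ being uniformly small), hence compact; by the resolvent identity $G_\nu$ is then compact for all admissible $\nu$.

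To conclude, I would replace $A_2$ by $A_2+\kappa I$ with $\kappa$ large — which changes neither $\dom(\cE)$ nor, by the previous step and the resolvent identity, the compactness of the resolvents, and makes $\cE$ $V$-elliptic — and put $J_\lambda:=(I+\lambda A_2)^{-1}\in\cL(H)$, compact for every $\lambda>0$. For $v\in V$ one has $v-J_\lambda v=\lambda A_2J_\lambda v$, so $\ip{v-J_\lambda v}{w}=\lambda\cE(J_\lambda v,w)$ for all $w\in V$, and therefore
\[
\norm{v-J_\lambda v}_H^2=\ip{v-J_\lambda v}{v}-\ip{v-J_\lambda v}{J_\lambda v}=\lambda\cE(J_\lambda v,v)-\lambda\cE(J_\lambda v,J_\lambda v)\le\lambda\,\cE(J_\lambda v,v)\lesssim\lambda\norm{v}_V^2,
\]
where I used $\cE(J_\lambda v,J_\lambda v)\ge0$, the boundedness of $\cE$ on $V$, and the bound $\norm{J_\lambda v}_V\lesssim\norm{v}_V$ (which follows from $\cE(J_\lambda v,J_\lambda v)\le\cE(J_\lambda v,v)$ — itself a consequence of $\cE(J_\lambda v,v-J_\lambda v)=\tfrac1\lambda\norm{v-J_\lambda v}_H^2\ge0$ — together with coercivity and boundedness). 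Now, given a bounded sequence $(u_n)\subset V$ with $\norm{u_n}_V\le M$, pass to a subsequence with $u_n\wto u$ in the reflexive space $V$, hence also in $H$, and subtract $u\in V$ so that $u_n\wto0$. Then $\norm{u_n-J_\lambda u_n}_H\lesssim\lambda^{1/2}M$ uniformly in $n$, while $\norm{J_\lambda u_n}_H\to0$ as $n\to\infty$ for each fixed $\lambda$ by compactness of $J_\lambda$ and $u_n\wto0$; from $\norm{u_n}_H\le\norm{u_n-J_\lambda u_n}_H+\norm{J_\lambda u_n}_H$ we get $\limsup_n\norm{u_n}_H\lesssim\lambda^{1/2}M$, and letting $\lambda\downarrow0$ yields $u_n\to0$ in $H$. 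Thus $\dom(\cE)\embed H$ is compact.

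The one place where I expect to need something more than soft arguments is the analyticity input: compactness of a power of the resolvent does \emph{not} in general force the resolvent itself to be compact, and the implication is rescued precisely by the sectoriality built into the coerciveness of $\cE$ — equivalently, by the analyticity of $S$ — which lets one factor $S(t)$ through the compact operator $G_{\nu_0}^{\,m}$. The kernel-representation fact in the first step is classical but deserves to be quoted with care; everything else is routine.
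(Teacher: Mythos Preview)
Your proof is correct and reaches the same conclusion, but the route to compactness of the resolvent is genuinely different from the paper's. The paper invokes a result of Arendt and Bukhvalov (Theorem~4.16(b) in their paper) as a black box: from $\dom(A_2^m)\subset L^\infty(D)$ one concludes directly that $J_\lambda=(I+\lambda A_2)^{-1}$ is compact on $H$. You instead unpack this implication by hand: first the kernel/Hilbert--Schmidt argument gives compactness of $G_{\nu_0}^{\,m}$, then analyticity (which you correctly note is available since coercive forms yield sectorial generators) lets you factor $S(t)$ through $G_{\nu_0}^{\,m}$, and finally the Laplace representation propagates compactness to every resolvent. Your approach is more self-contained and makes transparent exactly where the sectoriality of $\cE$ is used; the paper's is shorter but requires the reader to trust the cited theorem. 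The final approximation step --- controlling $\norm{u_n-J_\lambda u_n}_H$ in terms of $\lambda$ and the form, then using compactness of $J_\lambda$ --- is essentially the same in both proofs; the paper quotes an inequality from the Dirichlet-form literature (\cite[Lemma~2.11(iii)]{MR:DF}) for the bound $\cE_1^{(\lambda)}\lesssim\cE_1$, whereas you derive the equivalent estimate $\norm{v-J_\lambda v}_H^2\lesssim\lambda\norm{v}_V^2$ directly from the form identities, which is arguably cleaner.
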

\begin{proof}
  Let $(u_k)_k$ be a bounded sequence in $\dom(\cE)$, i.e., there exists a
  constant $N$ such that
  \[
  \norm{u_k}^2_H + \cE^s(u_k,u_k) < N \qquad \forall k \in \mathbb{N}.
  \]
  In particular, there exists a subsequence of $k$, denoted by the
  same symbol, such that $u_k$ converges weakly to $u$ in $H$ as
  $k \to \infty$. The goal is to show that the convergence is in fact
  strong. Since $\dom(A_2^m) \subset L^\infty(D)$ by assumption, it
  follows by a result of Arendt and Bukhvalov, see
  \cite[Theorem~4.16(b)]{AreBukh}, that the resolvent
  $J_\lambda:=(I+\lambda A_2)^{-1}$ is a compact operator on $H$ for
  all $\lambda>0$. The triangle inequality yields
  \[
  \norm{u_k - u} \leq \norm{u_k - J_\lambda u_k} 
  + \norm{J_\lambda u_k - J_\lambda u}
  + \norm{J_\lambda u - u},
  \]
  where the second term on the right-hand side converges to zero as
  $k \to \infty$ by compactness of $J_\lambda$. Moreover, since
  $J_\lambda \to I$ in $\cL_s(H,H)$ as $\lambda \to 0$, the third term
  on the right-hand side can be made arbitrarily small. Therefore we
  only have to bound the first term on the right-hand side: note that
  $I - J_\lambda = \lambda A_\lambda$, where $A_\lambda$, $\lambda>0$,
  stands for the Yosida approximation of $A_2$, hence
  $\norm{u_k-J_\lambda u_k} = \lambda \norm{A_\lambda u_k}$, and
  \begin{align*}
  \ip{A_\lambda u_k}{u_k} &= \ip{A_\lambda u_k}{u_k-J_\lambda u_k+J_\lambda u_k}
  = \lambda \norm{A_\lambda u_k}^2 + \ip{A_\lambda u_k}{J_\lambda u_k}\\
  &\geq \lambda \norm{A_\lambda u_k}^2,
  \end{align*}
  where we have used, in the last step, the identity
  $A_\lambda=A_2J_\lambda$ and the monotonicity of $A_2$.
  Since, by \cite[Lemma~2.11(iii), p.~20]{MR:DF}, one has
  \[
  \abs[\big]{\cE_1^{(\lambda)}(u,v)} \lesssim \cE_1(u,u)^{1/2}
  \cE_1^{(\lambda)}(v,v)^{1/2} \qquad \forall u \in \dom(\cE), \, v \in H,
  \]
  where $\cE^{(\lambda)}(u,v):=\ip{A_\lambda u}{v}$, $u,v \in H$, and
  the implicit constant depends only on $\cE$, it follows that
  \[
  \cE_1^{(\lambda)}(u,u) \lesssim \cE_1(u,u) 
  \qquad \forall u \in \dom(\cE),
  \]
  hence
  \[
  \norm{u_k - J_\lambda u_k}^2 = \lambda^2 \norm{A_\lambda u_k}^2 
  \leq \lambda \ip{A_\lambda u_k}{u_k} = \lambda \cE_1^{(\lambda)}(u_k,u_k)
  \lesssim \lambda \cE_1(u_k,u_k).
  \]
  By the assumptions on the sequence $(u_k)$,
  \[
  \cE_1(u_k,u_k) = \norm{u_k}^2 + \cE(u_k,u_k) = \norm{u_k}^2 + \cE^s(u_k,u_k)
  \]
  is bounded uniformely over $k$, hence $\norm{u_k - J_\lambda u_k}^2$
  can be made arbitrarily small as well, thus proving the claim.
\end{proof}

\medskip

Let us now consider some concrete examples: we first consider the case
of $A$ being a suitable ``realization'' of a second-order
differential operator, and then of a nonlocal operator. 
\begin{ex}[Symmetric divergence-form operators]
  Consider the bilinear form $\cE$ on $V:=H^1_0(D)$ defined by
  \[
  \cE(u,v) := \ip[\big]{a\nabla u}{\nabla v} 
  = \luca{\sum_{j,k=1}^n} a_{jk} \partial_j u \partial_k v,
  \]
  where \luca{$a=(a_{jk})$ with $a_{jk} \in L^\infty(D)$} for all $j,k$, and
  $a_{jk}=a_{kj}$. The (formal) differential operator associated to
  $\cE$ is
  \[
  A_0u := -\div\bigl(a \nabla u\bigr), \qquad u \in C^\infty_c(D),
  \]
  where $C^\infty_c(D)$ stands for the set of infinitely
  differentiable functions with compact support contained in $D$.  The
  form $\cE$ is $V$-elliptic if there exists $C>0$ such that
  $\ip{a\xi}{\xi} \geq C\abs{\xi}^2$ for all \luca{$\xi \in \erre^n$.}
  Moreover, if there exists a positive function $\mu \in C(D)$ such
  that \luca{$\ip{a\xi}{\xi} \leq \mu(\xi) |\xi|^2$ for all
    $\xi\in D$,} then $A_2$ has sub-Markovian resolvent (details can
  be found, e.g., in \cite[Chapter~1]{Dav-HK} and, in much more
  generality, in \cite[Chapter~II]{MR:DF}). Ultracontractivity
  estimates follow as a special case of the corresponding estimates
  for non-symmetric forms treated next.
\end{ex}

\begin{ex}[Non-symmetric divergence-form operators with lower-order terms]
  Consider the differential operator on smooth functions
  \begin{align*}
  A_0u &:= -\div(a\nabla u) + b \cdot \nabla u - \div(cu) + a_0u\\
       &= - \sum_{j,k=1}^n \partial_j(a_{jk} \partial_k u)
          + \sum_{j=1}^n \bigl( b_j \partial_j u - \partial_j(c_ju)\bigr)
          + a_0u,
  \end{align*}
  where $a_{jk}$, $b_j$, $c_j$, $a_0 \in L^\infty(D)$, and the
  associated (non-symmetric) bilinear form $\cE$ on $V:=H^1_0(D)$
  is defined as
  \begin{align*}
  \cE(u,v) &= \ip{a\nabla u}{\nabla v} + \ip{b \cdot \nabla u}{v}
  + \ip{u}{c \cdot \nabla v} + \ip{a_0u}{v}\\
  &= \int_D \Bigl( \sum_{jk} a_{jk} \partial_j u \, \partial_k v
  + \sum_j \bigl( b_j \partial_ju \,v + c_ju \partial_jv\bigr)
  + a_0uv \Bigr).
  \end{align*}
  The bilinear form $\cE$ is continuous, as it easily follows from the
  boundedness of its coefficients. If there exists a constant $C>0$
  such that $\ip{a\xi}{\xi} \geq C\abs{\xi}^2$, then $\cE$ is not
  $V$-elliptic, but satisfies the weaker estimate
  \[
  \cE(u,u) \geq C_1\norm{u}_V^2 - C_2\norm{u}_H^2
  \qquad \forall u \in V,
  \]
  where $C_1>0$ and $C_2 \in \erre$ (see, e.g.,
  \cite[{\S}11.2]{ISEM18} or \cite[p.~100]{Ouhabaz}), i.e. the
  corresponding operator $A$ satisfies (i'), but not (i).  Using the
  Poincar\'e inequality, it is not difficult to show that $\cE$ is
  $V$-elliptic if the diameter of $D$ is small enough (see
  \cite[pp.~385--387]{DauLio:2}).  If we furthermore assume that
  $a_0 - \div c \geq0$ (in the sense of distributions), then the
  semigroup $S_2$ is sub-Markovian, and so is also the resolvent of
  $A_2$. Similarly, if $a_0 - \div b \geq 0$,\footnote{These two
    conditions involving $a_0$ and the divergence of $b$, $c$, are not
    restrictive, as they are close to necessary to ensure that the
    bilinear form $\cE$ is positive. This can be seen by a simple
    computation based on integration by parts,
    cf.~\cite[p.~48]{MR:DF}.}  then the semigroup $S_2$ is
  $L^1$-contracting (these results can be found, for instance, in
  \cite[Proposition~11.14]{ISEM18}, or deduced from
  \cite[{\S}4.3]{Ouhabaz}). As already mentioned above, this implies
  that $S_2$ can be extended to a consistent family of semigroups
  $S_p$ for all $p \in \mathopen[1,\infty\mathclose[$. Finally, let us
  discuss ultracontractivity: if $\cE$ is $V$-elliptic, and $S_2$ as
  well as $S_2^*$ are sub-Markovian, then a reasoning based on the
  Nash inequality
  \[
  \norm[\big]{u}_{L^2}^{2+4/n} \leq 
  N \norm[\big]{u}_{H^1_0}^{2} \norm[\big]{u}_{L^1}^{4/n}
  \qquad \forall u \in H^1_0,
  \]
  implies the estimate
  \[
  \norm[\big]{S_2(t)}_{\cL(L^1,L^\infty)} \leq N_1 t^{-n/2},
  \]
  where $N_1:=\bigl( Nn/(2\alpha) \bigr)^{n/2}$. For a proof, see,
  e.g., \cite[Theorem~12.3.2]{ISEM:HK} or \cite[p.~159]{Ouhabaz}.  The
  Laplace transform representation of the resolvent yields
  \[
  (I+\lambda A_1)^{-m} = 
  \frac{\lambda^m}{(m-1)!} \int_0^\infty t^{m-1} e^{-\lambda t} S(t)\,dt
  \]
  (see, e.g., \cite[p.~17]{ISEM18} or \cite[p.~21]{Pazy}), hence
  \[
  \norm[\big]{(I+\lambda A_1)^{-m}}_{\cL(L^1,L^\infty)} \lesssim
  \frac{\lambda^m}{(m-1)!} \int_0^\infty t^{m-1-n/2} e^{-\lambda t}\,dt.
  \]
  Thus it suffices to choose $m$ large enough to infer the
  ultracontractivity of the $m$-th power of the resolvent.
\end{ex}

\begin{ex}[Fractional Laplacian]
  Let $\Delta$ be the Dirichlet Laplacian on $H$. Since it is a
  positive self-adjoint operator, it follows that, for any
  $\alpha \in \mathopen]0,1\mathclose[$, $(-\Delta)^\alpha$ is itself
  a positive self-adjoint (densely defined) operator on
  $H$. Furthermore, the bilinear form
  \[
  \cE(u,v) := \ip[\big]{(-\Delta)^\alpha u}{v}
  = \ip[\big]{(-\Delta)^{\alpha/2}u}{(-\Delta)^{\alpha/2}v}, 
  \qquad u,v \in \dom\bigl((-\Delta)^{\alpha/2}\bigr),
  \]
  is a symmetric Dirichlet form on $H$, which, as already seen,
  uniquely determines an operator $A$ satisfying conditions (i'),
  (ii), and (iii): in particular,
  $V=\dom\bigl((-\Delta)^{\alpha/2}\bigr)$, equipped with the scalar
  product $\ip{\cdot}{\cdot}_V:=\ip{\cdot}{\cdot} + \cE$, and $A$ is
  just the extension of $(-\Delta)^\alpha$, generator of $\cE$, to
  $V$. In order to prove (iv), we are going to use again an argument
  based on the Nash inequality, which is however more involved as
  before. In particular, since $-\Delta$ satisfies the Nash inequality
  \[
  \norm[\big]{u}_{L^2}^{2+4/n} \lesssim 
  \ip[\big]{-\Delta u}{u} \norm[\big]{u}_{L^1}^{4/n}
  \qquad \forall u \in H^1_0,
  \]
  a result by Bendikov and Maheux, see \cite[Theorem~1.3]{Bend:Nash},
  implies that the fractional power $(-\Delta)^\alpha$ satisfies the
  Nash inequality
  \[
  \norm[\big]{u}_{L^2}^{2+4\alpha/n} \lesssim 
  \ip[\big]{(-\Delta)^\alpha u}{u} \norm[\big]{u}_{L^1}^{4\alpha/n}
  \qquad \forall u \in \dom(\cE).
  \]
  It follows by a general criterion of Varopoulos, Saloff-Coste and
  Coulhon (attributed to Ph.~B\'enilan), see
  \cite[Theorem~II.5.2]{VSC}, that the semigroup $S_\alpha$ on $H$
  generated by $(-\Delta)^\alpha$ satisfies the ultracontractivity
  estimate
  \[
  \norm[\big]{S_\alpha(t)}_{\cL(L^1,L^\infty)} \lesssim t^{-n/2\alpha},
  \]
  from which corresponding estimates for suitable powers of the
  resolvent can be deduced, as in the previous example.
\end{ex}
Related results on ultracontractivity and smoothing properties of
semigroups generated by non-local operators, arising as generators of
Markov processes, can be found, e.g., in \cite{Gentil:Levy,cm:AIHP14}.

\medskip

We proceed with a brief discussion about the relation between our
hypotheses on $A$ and those needed in the deterministic setting, where
it is enough to prove that $A+\beta$ is maximal monotone in $H$ to get
well-posedness of the nonlinear equation, for any right-hand side
belonging to $L^1(0,T;H)$. Probably the most widely used criterion for
the maximal monotonicity of the sum of two maximal monotone operators
on $H$, at least with applications to PDE in mind, is the following:
let $F$ be a maximal monotone operator on $H$ and $\varphi$ a lower
semi-continuous proper convex function on $H$. If
\begin{equation}  \label{eq:smm}
\varphi\bigl((I+\lambda F)^{-1}u\bigr) \leq \varphi(u) + C\lambda
\qquad \forall \lambda>0, \; \forall u \in \dom(\varphi),
\end{equation}
then $F+\partial\varphi$ is maximal monotone (see \cite[Theorem~9,
p.~108]{Bre-mm}). In the case of semilinear perturbations of the
Laplacian of the type $-\Delta + \beta$, this result is used as
follows: let $\varphi$ be such that $-\Delta=\partial\varphi$, and
\[
\psi: u \mapsto
\begin{cases}
\displaystyle \int_D j(u)\,dx,  & \text{ if } j(u) \in L^1(D),\\[8pt]
+\infty, & \text{ if } j(u) \not\in L^1(D).  
\end{cases}
\]
Then $\psi:H \to \erre \cup \{+\infty\}$ is proper convex lower
semicontinuous, and $F:=\partial\psi$ is maximal monotone, with
$F(u)=\beta(u)$ a.e. for all $u \in H$ such that $j(u) \in L^1(D)$.
Then one has, recalling that $(I+\lambda\beta)^{-1}$ is a contraction
on $\erre$,
\begin{align*}
\varphi\bigl((I+\lambda F)^{-1}u\bigr) 
&= \int_D \abs[\big]{\nabla (I+\lambda\beta)^{-1}u}^2\,dx\\
&\leq \int_D \abs{\nabla u}^2\,dx = \varphi(u),
\end{align*}
so that \eqref{eq:smm} is satisfied, and $-\Delta + \beta$ is maximal
monotone. If one replaces $-\Delta$ with a general positive
self-adjoint operator $A$ on $H$, it is not clear how to adapt such
reasoning. However, if we assume that $A$ is the generator of a
symmetric Dirichlet form $\cE$ on $H$, then \eqref{eq:smm} is
satisfied, with $C=0$ and $\varphi=\cE$. This follows from the fact
that $(I+\lambda \beta)^{-1}$ is a normal contraction on $\erre$ and
that, for any normal contraction $T$ on $\erre$, $u \in \dom(\cE)$
implies $Tu \in \dom(\cE)$ and $\cE(Tu,Tu) \leq \cE(u,u)$, a proof of
which can be found, e.g., in \cite[Theorem~4.12, p.~36]{MR:DF}.

On the other hand, if $A$ is maximal monotone but not self-adjoint, we
cannot express it as the subdifferential of a convex function on
$H$. Hence we are led to ``dualize'' the previous argument, i.e. we
can try to show that
\[
\psi\bigl((I+\lambda A)^{-1}u\bigr) \leq \psi(u) + C\lambda
\qquad \forall \lambda>0, \; \forall u \in \dom(\varphi).
\]
Knowing only that the resolvent is a contraction does not seem enough
to proceed. However, if we assume that the resolvent is sub-Markovian,
we can apply Jensen's inequality (see Lemma~\ref{lm:J} below), so that
\[
j\bigl((I+\lambda A)^{-1}u\bigr) \leq (I+\lambda A)^{-1} j(u),
\]
hence, integrating,
\[
\psi\bigl((I+\lambda A)^{-1}u\bigr) =
\int_D j\bigl((I+\lambda A)^{-1}u\bigr)\,dx 
\leq \int_D (I+\lambda A)^{-1} j(u)\,dx.
\]
Assuming also that the resolvent is contracting in $L^1$, we obtain
$\psi\bigl((I+\lambda A)^{-1}u\bigr) \leq \psi(u)$, hence that
$A+\beta$ is maximal monotone in $H$. Recall that $A$ is contracting
in $L^1$ if it is the generator of a (nonsymmetric) Dirichlet form.
It results from this discussion that our conditions (ii) and (iii) on
$A$ are not restrictive and are probably close to optimal, while the
ultracontractivity condition (iv) is completely superfluous in the
deterministic setting. Moreover, while condition (i') is always
satisfied if $A$ is self-adjoint, it is equally superfluous in the
deterministic case if $A$ is non-symmetric.

\medskip

Let us now comment on the Lipschitz continuity assumption on $B$. It
is natural to ask whether a well-posedness result analogous to
Theorem~\ref{th:main} holds under the weaker assumption that $B$ is
progressively measurable, linearly growing, and just locally Lipschitz
continuous, i.e. assuming that there exists a sequence $(L_B^n)_n$ of
positive real numbers such that
\[
\norm[\big]{B(\omega,t,x) - B(\omega,t,y)}_{\cL^2(U,H)} 
\leq L_B^n \norm{x-y}_H
\]
for every $(\omega,t) \in \Omega \times [0,T]$ and $x,y\in H$ with
$\norm{x}_H, \norm{y}_H \leq n$, for every $n\in\enne$. In this case,
introducing the globally Lipschitz continuous truncated operators
\[
  B_n:\Omega\times[0,T]\times H\to\cL^2(U,H), \qquad B_n(\omega,t,x)
  := B(\omega,t, nPx),
\]
for all $n \in \enne$, where $P:H\to H$ is the projection on the
closed unit ball in $H$, the stochastic evolution equation
\[
  dX_n + AX_n\,dt + \beta(X_n)\,dt \ni B_n(t,X_n)\,dW, \qquad
  X_n(0)=X_0,
\]
is well-posed in $\mathscr{J}$ for all $n \in \enne$. One would now
expect to be able to construct a global solution by suitably
``gluing'' the solutions $(X_n,\xi_n)$. In fact, this technique has
been successfully applied in several situations (cf.,
e.g.,~\cite{beam,KvN2,vNVW}): the key argument is to introduce the
sequence of stopping times $(\tau_n)_n$ defined as
\[
  \tau_n := \inf \bigl\{ t\in[0,T]: \norm{X_n(t)} \geq n \bigr\}
  \wedge T,
\]
and to show that, for any $m > n$, one has $X_m = X_n$ on
\[
  [\![0,\tau_n]\!]:=\bigl\{ (\omega,t) \in \Omega \times [0,T] :\, 0
  \leq t \leq \tau_n(\omega) \bigr\}.
\]
For this construction to work, it seems essential to assume that $X_n$
has continuous trajectories for all $n \in \enne$ (as is the case in
\textsl{op.~cit.}). However, in our case, we only know that the
trajectories of $X_n$ are weakly continuous in $H$, hence the above
construction does not seem to work. On the other hand, we conjecture
that strong solutions in $\mathscr{J}$ to \eqref{eq:0} are indeed
pathwise continuous under suitable polynomial boundedness assumption
on $\beta$, and that, in this case, equations with locally Lipschitz
diffusion coefficient can be shown to be well-posed. This will be
treated in forthcoming work.  We conclude remarking that such a
well-posedness result for semilinear equations with polynomially
growing drift does not follow from the classical variational approach
(see, e.g., \cite[Example~5.1.8]{LiuRo}).

\ifbozza\newpage\else\fi
\section{Preliminaries}
\label{sec:prelim}
We collect, for the reader's convenience, several notions and results
that we are going to use in the following sections.

\subsection{Convex analysis and monotone operators}
\label{ssec:cvx}
We recall basic concepts of convex analysis and their connections with
the theory of maximal monotone operators. We limit ourselves to the
case of functions (and operators) defined on the real line, as we will
not need the general setting of Banach spaces. For a comprehensive
treatment we refer, e.g., to \cite{barbu,Bmax,lema}.

A graph $\gamma$ in $\erre \times \erre$ is called monotone if
\[
(x_1-x_2)(y_1-y_2) \geq 0
\]
for all $(x_1,y_1)$, $(x_2,y_2) \in \gamma$. If $\gamma$ is maximal in
the family of monotone subsets of $\erre \times \erre$, endowed with
the partial order relation of set inclusion, then it is said to be
maximal monotone. In other words, $\gamma$ is maximal monotone if it
does not admit any proper monotone extension. This maximality property
is equivalent to the range condition
\[
\mathsf{R}(I + \lambda \gamma) = \erre \qquad \forall \lambda>0,
\]
where $I$ stands for the identity function. Monotonicity implies that
the inverse $(I+\lambda\gamma)^{-1}$, called the resolvent of
$\gamma$, is single-valued (hence a function, not just a graph) and
contracting. Moreover, $(I+\lambda\gamma)^{-1}$ converges pointwise to
the projection on the closed convex set $\overline{\dom(\gamma)}$ as
$\lambda \to 0$. An essential tool is the Yosida regularization
$\gamma_\lambda:\erre \to \erre$, defined as
\[
\gamma_\lambda := \frac{1}{\lambda}\bigl( I - (I+\lambda\gamma)^{-1} \bigr),
\qquad \lambda>0.
\]
The following properties will be used extensively:
\begin{itemize}
\item[(a)] $\gamma_\lambda$ is monotone and Lipschitz continuous, with
  Lipschitz constant bounded by $1/\lambda$;
\item[(b)] $\gamma_\lambda \in \gamma \circ (I+\lambda\gamma)^{-1}$.
\end{itemize}

\medskip

Let $\varphi: \erre \to \erre \cup \{+\infty\}$ be a function not
identically equal to $+\infty$ (i.e., proper), convex and
lower-semicontinuous.
Denoting the set of subsets of $\erre$ by $\mathfrak{P}(\erre)$, the map
\begin{align*}
  \partial\varphi: \erre &\longrightarrow \mathfrak{P}(\erre)\\
  x &\longmapsto \bigl\{z \in \erre: \, \varphi(y) - \varphi(x) \geq z(y-x)
\quad \forall y \in \erre \bigr\}
\end{align*}
is called the subdifferential of $\varphi$. The multivalued map
$\gamma:=\partial\varphi$, that can equivalently be considered as a graph in
$\erre \times \erre$, is maximal monotone. Conversely, every maximal
monotone graph of $\erre \times \erre$ is the subdifferential of a
convex proper function, which is, roughly speaking, its indefinite
integral.

The Moreau-Yosida regularization of $\varphi$ is the convex
differentiable function $\varphi_\lambda:\erre \to \erre$ defined by
\[
  \varphi_\lambda(x) := \inf_{y \in \erre} \Bigl( \varphi(y) +
  \frac{\abs{x-y}^2}{2\lambda} \Bigr), \qquad \lambda>0.
\]
It enjoys the following fundamental properties:
\begin{itemize}
\item[(c)] $\varphi'_\lambda = \gamma_\lambda$, where $\gamma_\lambda$
denotes the Yosida regularization of $\gamma=\partial\varphi$;
\item[(d)] $\varphi_\lambda$ converges pointwise to $\varphi$ from
  below as $\lambda \to 0$;
\end{itemize}
The (Fenchel-Legendre) conjugate of $\varphi$ is
the proper convex lower-semicontinuous function
$\varphi^*: \erre \to \erre \cup \{+\infty\}$ defined as
\[
\varphi^*: x \mapsto \sup_{y\in\erre} \bigl( xy - \varphi(y) \bigr).
\]
The Young inequality
\[
xy \leq \varphi(y) + \varphi^*(x) \qquad \forall x,y \in \erre
\]
follows immediately from the definition. The following properties will be particularly useful:
\begin{itemize}
\item[(e)] equality holds in the Young inequality if and only if
  $x \in \partial \varphi(y)$;
\item[(f)] if $\dom(\gamma)=\erre$, then $\varphi^*$ is superlinear
  at infinity, i.e.
  \[
  \lim_{\abs{r} \to \infty} \frac{\varphi^*(r)}{\abs{r}} = +\infty.
  \]
\end{itemize}

We shall also need a result about passing to the limit ``within''
maximal monotone graphs due to Br\'ezis, see \cite[Theorem~18,
p.~126]{Bre-mm}.
\begin{lemma}  \label{lm:Brezis}
  Let $\gamma$ be a maximal monotone graph in $\erre \times \erre$ with
  $\dom(\gamma)=\erre$ and $0 \in \gamma(0)$. Assume that the sequences
  $(y_n)_{n\in\mathbb{N}}$, $(g_n)_{n\in\mathbb{N}}$ of real-valued
  measurable functions on a finite measure space $(Y,\mathscr{A},\mu)$
  are such that $y_n \to y$ $\mu$-a.e. as $n\to\infty$,
  $g_n \in \gamma(y_n)$ $\mu$-a.e. for all $n\in\mathbb{N}$, and
  $(g_ny_n)$ is a bounded subset of $L^1(Y,\mathscr{A},\mu)$. Then
  there exists $g \in L^1(Y,\mathscr{A},\mu)$ and a subsequence $n'$
  such that $g_{n'} \to g$ weakly in $L^1(Y,\mathscr{A},\mu)$ as
  $n' \to \infty$ and $g \in \gamma(y)$ $\mu$-almost everywhere.
\end{lemma}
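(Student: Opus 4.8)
The plan is to combine a uniform-integrability argument, which yields weak $L^1$-compactness of $(g_n)$, with a localized monotonicity (Minty-type) argument to identify the weak limit as an element of the graph.

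\emph{Step 1: extraction of a weakly convergent subsequence.} Let $j$ be the potential of $\gamma$, so that $\partial j=\gamma$, $j(0)=0$, and, since $0\in\gamma(0)$, the point $0$ minimizes $j$, hence $j\geq 0$; dually $0\in\partial j^*(0)$ and $j^*(0)=-\inf j=0$, so $j^*\geq 0$ as well. Because $\dom(\gamma)=\erre$, property~(f) of {\S}\ref{ssec:cvx} gives that $j^*$ is superlinear at infinity. From $g_n\in\gamma(y_n)=\partial j(y_n)$ and the equality case~(e) of Young's inequality, $y_ng_n=j(y_n)+j^*(g_n)$ $\mu$-a.e., whence $j^*(g_n)\leq y_ng_n$ and therefore $\int_Y j^*(g_n)\,d\mu\leq\sup_k\int_Y y_kg_k\,d\mu<\infty$. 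The superlinearity of $j^*$ then yields, by the de la Vall\'ee-Poussin criterion, that $(g_n)$ is uniformly integrable; as $\mu$ is finite, $(g_n)$ is also bounded in $L^1(Y)$, so by the Dunford--Pettis theorem there is a subsequence $(g_{n'})$ converging weakly in $L^1(Y)$ to some $g\in L^1(Y)$.

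\emph{Step 2: passage to the limit inside the graph.} Fix $(a,b)\in\gamma$. Monotonicity gives $(y_{n'}-a)(g_{n'}-b)\geq 0$ $\mu$-a.e. The obstacle is that the product $y_{n'}g_{n'}$ cannot be handled directly, since only $y_{n'}\to y$ a.e.\ and $g_{n'}\rightharpoonup g$ weakly are available; I would circumvent this by localizing to sets on which $y_{n'}$ converges uniformly. For $k,m\in\enne$, applying Egorov's theorem to $y_n\to y$ on the finite-measure set $\{\abs{y}\leq k\}$ produces $F_{k,m}\subseteq\{\abs{y}\leq k\}$ with $\mu(\{\abs{y}\leq k\}\setminus F_{k,m})<1/m$ and $y_{n'}\to y$ uniformly on $F_{k,m}$; in particular $\sup_{F_{k,m}}\abs{y_{n'}}$ is bounded in $n'$ and $y\mathbf 1_{F_{k,m}}\in L^\infty(Y)$. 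Integrating the monotonicity inequality over any measurable $F\subseteq F_{k,m}$ and letting $n'\to\infty$ — using $\int_F y\,(g_{n'}-g)\to 0$ by weak convergence against $y\mathbf 1_F\in L^\infty$, $\int_F(y_{n'}-y)g_{n'}\to 0$ by the uniform smallness of $y_{n'}-y$ on $F$ times the $L^1$-bound on $(g_{n'})$, and $\int_F g_{n'}\to\int_F g$, $\int_F y_{n'}\to\int_F y$ — one obtains $\int_F(y-a)(g-b)\,d\mu\geq 0$. Choosing $F:=F_{k,m}\cap\{(y-a)(g-b)<0\}$ forces $\mu(F)=0$; since $\bigcup_{k,m}F_{k,m}$ is $\mu$-conull (as $y$ is finite $\mu$-a.e.), this proves $(y-a)(g-b)\geq 0$ $\mu$-a.e.

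\emph{Step 3: conclusion via maximality.} Being a closed subset of the separable metric space $\erre\times\erre$, the graph $\gamma$ admits a countable dense subset $\{(a_i,b_i)\}_{i\in\enne}$. Discarding a single $\mu$-null set, we have $(y(\omega)-a_i)(g(\omega)-b_i)\geq 0$ for all $i$ and $\mu$-a.e.\ $\omega$; by continuity in $(a,b)$ the inequality $(y(\omega)-a)(g(\omega)-b)\geq 0$ then holds for every $(a,b)\in\gamma$, and the maximal monotonicity of $\gamma$ forces $(y(\omega),g(\omega))\in\gamma$, i.e.\ $g\in\gamma(y)$ $\mu$-a.e. The main difficulty is the weak--strong passage to the limit in Step 2, which is precisely why the Egorov-type localization, together with the $L^1$-boundedness of $(g_n)$ obtained in Step 1, is indispensable.
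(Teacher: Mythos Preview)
The paper does not give its own proof of this lemma: it is stated as a result of Br\'ezis and simply cited (Theorem~18, p.~126 in \cite{Bre-mm}). Your argument is correct and is essentially the standard one: uniform integrability of $(g_n)$ via the superlinearity of $j^*$ and de~la~Vall\'ee-Poussin/Dunford--Pettis, followed by a Minty-type identification using Egorov localization to pass to the limit in the product $y_{n'}g_{n'}$, and finally maximality against a countable dense subset of the graph. There is nothing to compare on the paper's side beyond the citation.
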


Finally, we recall a simplified version of an ``abstract'' Jensen's
inequality, due to Haase (see \cite[Theorem~3.4]{Haa07}), that will be
used to prove a priori estimates for convex functionals of stochastic
processes.
\begin{lemma}
  \label{lm:J}
  Let $(Y,\mathscr{A},\mu)$, $(Z,\mathscr{B},\nu)$ be measure spaces,
  $E \subset L^0(Y,\mathscr{A},\mu)$ a Banach function space, and
  \[
  T: E \longrightarrow L^0(Z,\mathscr{B},\nu)
  \]
  a linear continuous sub-Markovian operator. Moreover, let
  $\varphi:\erre \to \mathopen[0,\infty\mathclose[$ be a convex lower
  semicontinuous function with $\varphi(0)=0$. Then
  \[
  \varphi(Tf) \leq T\varphi(f)
  \]
  for all $f \in  E$ such that $\varphi(f) \in E$.
\end{lemma}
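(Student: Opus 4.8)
The plan is to reduce the inequality to the case of affine functions, for which it is an elementary consequence of linearity and sub-Markovianity, and then recover the general statement by taking a countable supremum, using only the positivity of $T$ to pass to the limit.

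First I would represent $\varphi$ as a countable supremum of affine minorants: since $\varphi$ is finite, convex, and lower semicontinuous on $\erre$, its epigraph is a closed convex subset of $\erre^2$, hence the intersection of countably many closed half-planes, so that $\varphi = \sup_{n\in\enne}\ell_n$ for affine functions $\ell_n(x) = a_nx + b_n \le \varphi(x)$. Evaluating at $x=0$ and using $\varphi(0)=0$ forces $b_n \le 0$; since moreover $\varphi \ge 0$, one has $\ell_n \le \ell_n^+ \le \varphi$ with $\ell_n^+ := (a_n\cdot + b_n)^+$, whence also $\varphi = \sup_{n\in\enne}\ell_n^+$.

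Next, fixing $f \in E$ with $\varphi(f) \in E$, I would establish the inequality $T\ell_n^+(f) \ge \ell_n^+(Tf)$ in $L^0(Z,\mathscr{B},\nu)$ for each $n$. The key is the algebraic identity $\ell_n^+(x) = a_nx - \min(a_nx, -b_n)$, valid because $-b_n \ge 0$, which I rewrite as
\[
\ell_n^+(f) = a_n f - g_n + h_n, \qquad g_n := \min\bigl((a_nf)^+, -b_n\bigr), \quad h_n := (a_nf)^-.
\]
Here $g_n$ and $h_n$ are nonnegative, $g_n \le -b_n$ pointwise, and all of $a_nf$, $g_n$, $h_n$ lie in $E$ (each is dominated by $\abs{a_n}\abs{f}\in E$, by solidity of the Banach function space, just as $\ell_n^+(f)\le\varphi(f)\in E$). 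Applying $T$ and using linearity, positivity ($Th_n \ge 0$), and the sub-Markovian property (which gives $0 \le g_n \le -b_n$ pointwise $\Rightarrow 0 \le Tg_n \le -b_n$ $\nu$-a.e., via $f\in E$, $0 \le f \le 1$ $\mu$-a.e.\ $\Rightarrow 0 \le Tf \le 1$ $\nu$-a.e., and so never requires constants to belong to $E$) yields $T\ell_n^+(f) \ge a_n Tf + b_n$ $\nu$-a.e.; combined with $T\ell_n^+(f) \ge 0$ (positivity, since $\ell_n^+(f) \ge 0$), this gives $T\ell_n^+(f) \ge (a_n Tf + b_n)^+ = \ell_n^+(Tf)$ $\nu$-a.e.

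Finally, since $\ell_n^+(f) \le \varphi(f)$, positivity of $T$ gives $T\ell_n^+(f) \le T\varphi(f)$, so $T\varphi(f) \ge \ell_n^+(Tf)$ $\nu$-a.e.\ for every $n$; discarding the countable union of exceptional null sets and taking the supremum over $n \in \enne$ gives $T\varphi(f) \ge \sup_n \ell_n^+(Tf) = \varphi(Tf)$ $\nu$-a.e., as desired. I expect the only genuine subtlety to be the bookkeeping in the third step: one has to choose the decomposition of $\ell_n^+$ precisely so that the additive constant $b_n$ enters only through a term pointwise squeezed between $0$ and $-b_n$, since mere positivity of $T$ cannot carry a nonpositive constant through — it is exactly here that the \emph{sub-Markovian} property, rather than just positivity, is indispensable.
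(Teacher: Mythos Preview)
The paper does not prove this lemma; it merely records it as a simplified version of a result of Haase and cites \cite[Theorem~3.4]{Haa07} without further argument. Your proof is correct and self-contained. The strategy---writing $\varphi$ as a countable supremum of truncated affine minorants $\ell_n^+$ and pushing each through $T$---is the standard one for abstract Jensen inequalities of this type, and is close in spirit to Haase's original argument. Your decomposition $\ell_n^+(f)=a_nf-g_n+h_n$ with $0\le g_n\le -b_n$ is precisely the device needed to avoid ever applying $T$ to a constant function (which may fail to lie in $E$), and your closing remark that this is exactly where sub-Markovianity, rather than mere positivity, becomes essential is correct.
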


\subsection{Hilbert-Schmidt operators}
Let us recall now some standard facts about linear maps. We recall
that the space of continuous linear operators from a Banach space $E$
to another one $F$, equipped with the strong operator topology, is
denoted by $\cL_s(E,F)$. If $E$ and $F$ are Hilbert spaces, the space
of Hilbert-Schmidt operators $\cL^2(E,F)$ is an operator ideal,
\luca{in particular} it is stable with respect to pre-composition as
well as post-composition with continuous linear operators: if $E'$ and
$F'$ are also Hilbert spaces, and
\[
E' \xrightarrow{\;R\;} E \xrightarrow{\;T\;} F \xrightarrow{\;L\;} F',
\]
with $R$ and $L$ continuous linear operators, then
$LTR \in \cL^2(E',F')$,\footnote{One may say, in a shorter but perhaps
  cryptic way, that $\cL^2$ is functorial, more precisely that
  $\cL^2(E,\cdot)$ and $\cL^2(\cdot,F)$ are a covariant and a
  contravariant functor, respectively.} with
\[
\norm[\big]{LTR}_{\cL^2(E',F')} \leq 
\norm[\big]{L}_{\cL(F,F')} \norm[\big]{T}_{\cL^2(E,F)} \norm[\big]{R}_{\cL(E',E)} 
\]
(see, e.g., \cite[p.~V.52]{Bbk:EVT}). It follows from these properties
that, for any $T \in \cL^2(E,F)$, the mapping
\begin{align*}
\Phi_T: \cL_s(F,F') &\longrightarrow \cL^2(E,F')\\
L \longmapsto LT
\end{align*}
is continuous: $L_n \to L$ in $\cL_s(F,F')$ implies that $L_nT \to LT$
in $\cL^2(E,F')$. If $E$ and $F$ are separable, then $\cL^2(E,F)$ is
itself a separable Hilbert space.

\begin{lemma}  \label{lm:DY}
  If $G$ is a progressively measurable $\cL^2(U,H)$-valued process such that
  \[
    \E\int_0^T\norm[\big]{G(s)}^2_{\cL^2(U,H)}\,ds<\infty
  \]
  and $F$ is a progressively measurable $H$-valued process such that
  $\E (F^*_T)^2<\infty$, 
  then, for any $\varepsilon>0$,
  \[
  \E \bigl((FG)\cdot W\bigr)_T^* \leq \varepsilon
  \E \bigl(F_T^*\bigr)^2 
  + N(\varepsilon) \E\int_0^T \norm[\big]{G(s)}^2_{\cL^2(U,H)}\,ds.
  \]
\end{lemma}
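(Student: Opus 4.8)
The plan is to recognise $(FG)\cdot W$ as a real-valued continuous local martingale and to combine the Burkholder--Davis--Gundy inequality, in the form with exponent one (Davis' inequality), with Young's inequality. First I would make the integrand precise: identifying $F(s)$ with the functional $\ip{F(s)}{\,\cdot\,} \in \cL(H,\erre)$, the product $F(s)G(s)$ is the composition $\ip{F(s)}{G(s)\,\cdot\,}$, which by the operator ideal property of Hilbert--Schmidt operators recalled above belongs to $\cL^2(U,\erre)$ and satisfies
\[
\norm[\big]{F(s)G(s)}_{\cL^2(U,\erre)} \le \norm[\big]{F(s)}_H\,\norm[\big]{G(s)}_{\cL^2(U,H)}.
\]
Since $F^*_T < \infty$ and $\int_0^T \norm{G(s)}^2_{\cL^2(U,H)}\,ds < \infty$ $\P$-a.s.\ (the latter by the integrability hypothesis on $G$), the process $FG$ is $\P$-a.s.\ square-integrable on $[0,T]$, so that $M:=(FG)\cdot W$ is a well-defined real-valued continuous local martingale.

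Its quadratic variation is bounded by
\[
[M]_T = \int_0^T \norm[\big]{F(s)G(s)}^2_{\cL^2(U,\erre)}\,ds
\le \bigl(F^*_T\bigr)^2 \int_0^T \norm[\big]{G(s)}^2_{\cL^2(U,H)}\,ds ,
\]
hence, by the Burkholder--Davis--Gundy inequality for continuous local martingales (valid with both sides possibly infinite), there is an absolute constant $C$ with
\[
\E\,M^*_T \le C\,\E\,[M]_T^{1/2}
\le C\,\E\Bigl( F^*_T \Bigl( \int_0^T \norm[\big]{G(s)}^2_{\cL^2(U,H)}\,ds \Bigr)^{1/2} \Bigr).
\]
Young's inequality $ab \le \delta a^2 + (4\delta)^{-1} b^2$, applied with $a:=F^*_T$, $b:=\bigl( \int_0^T \norm{G(s)}^2_{\cL^2(U,H)}\,ds \bigr)^{1/2}$ and $\delta:=\varepsilon/C$, then yields
\[
\E\,M^*_T \le \varepsilon\,\E\bigl(F^*_T\bigr)^2 + \frac{C^2}{4\varepsilon}\,\E\int_0^T \norm[\big]{G(s)}^2_{\cL^2(U,H)}\,ds ,
\]
which is the assertion with $N(\varepsilon):=C^2/(4\varepsilon)$; in particular the right-hand side is finite by hypothesis, so $M^*_T \in L^1(\Omega)$ as a byproduct.

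There is no substantial difficulty in this argument; the only point deserving a word of care is that one should not take for granted that $\E\,[M]_T^{1/2}$ is finite, since it is not clear that $\E\bigl( (F^*_T)^2 \int_0^T \norm{G}^2_{\cL^2(U,H)}\,ds \bigr) < \infty$ (the maximal function of $F$ and the energy of $G$ need not be independent). Accordingly one must invoke the version of the Burkholder--Davis--Gundy inequality that holds for arbitrary continuous local martingales, the finiteness of all the quantities involved being an output of the final estimate rather than an input.
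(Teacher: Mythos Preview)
Your proof is correct and follows essentially the same route as the paper's: bound $\norm{F(s)G(s)}_{\cL^2(U,\erre)}$ via the ideal property of Hilbert--Schmidt operators, apply Davis' inequality to the resulting real local martingale, and finish with Young's inequality. Your additional remark on invoking the BDG inequality in the form valid for arbitrary continuous local martingales (so that finiteness is an output rather than an input) is a nice touch that the paper leaves implicit.
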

\begin{proof}
  By the ideal property of Hilbert-Schmidt operators, one has
  \begin{align*}
  \norm[\big]{F(s)G(s)}_{\cL^2(U,\erre)} &\leq
  \norm[\big]{F(s)}_{H} \norm[\big]{G(s)}_{\cL^2(U,H)}\\
  &\leq (F^*_T) \norm[\big]{G(s)}_{\cL^2(U,H)}
  \end{align*}
  for all $s \in [0,T]$, hence
  \[
  \int_0^T \norm[\big]{F(s)G(s)}^2_{\cL^2(U,\erre)}\,ds \leq
  (F^*_T)^2
  \int_0^T \norm[\big]{G(s)}^2_{\cL^2(U,H)}\,ds,
  \]
  where the right-hand side is finite $\P$-a.s. thanks to the
  assumptions on $F$ and $G$. Then $(FG) \cdot W$ is a
  local martingale, for which Davis' inequality yields
  \begin{align*}
  \E \bigl((FG)\cdot W\bigr)_T^* &\lesssim 
  \E\bigl[(FG)\cdot W,(FG)\cdot W\bigr]_T^{1/2}\\
  &= \E\biggl( \int_0^T 
  \norm[\big]{F(s)G(s)}^2_{\cL^2(U,\erre)}\,ds \biggr)^{1/2}\\
  &\leq \E (F^*_T)
  \biggl( \int_0^T \norm[\big]{G(s)}^2_{\cL^2(U,H)}\,ds \biggr)^{1/2}.
  \end{align*}
  The proof is finished invoking the elementary inequality
  \[
  ab \leq \frac12 \bigl( \varepsilon a^2 + \frac{1}{\varepsilon} b^2\bigr)
  \qquad \forall a, b \in \erre.
  \qedhere
  \]
\end{proof}

\subsection{Continuity and compactness in spaces of vector-valued
  functions}
The following result by Strauss, see \cite[Theorem~2.1]{Strauss},
provides sufficient conditions for a vector-valued function to be
weakly continuous. It will be used to establish the pathwise weak
continuity of solutions to several stochastic equations. We recall
that, given a Banach space $E$ and an interval $I \subseteq \erre$, the
space of weakly continuous functions from $I$ to $E$ is denoted
by $C_w(I;E)$.
\begin{lemma}
  \label{lm:Strauss}
  Let $E$ and $F$ be Banach spaces such that $E$ is dense in $F$,
  $E \embed F$, and $E$ is reflexive. Then
  \[
  L^\infty(0,T;E) \cap C_w([0,T];F) = C_w([0,T];E).
  \]
\end{lemma}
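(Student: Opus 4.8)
The plan is to prove the two inclusions separately; only $L^\infty(0,T;E) \cap C_w([0,T];F) \subseteq C_w([0,T];E)$ carries content. For the reverse inclusion I would argue that if $u \in C_w([0,T];E)$, then for each $\phi \in E^*$ the scalar function $\phi \circ u$ is continuous on the compact interval $[0,T]$, hence bounded; so $u([0,T])$ is weakly bounded, hence norm bounded by the uniform boundedness principle, i.e. $u \in L^\infty(0,T;E)$. Since the embedding $i:E\embed F$ is a bounded linear operator, it is weak-to-weak continuous, so $u = i\circ u$ is also weakly continuous with values in $F$.

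For the main inclusion I would fix $u \in L^\infty(0,T;E) \cap C_w([0,T];F)$ and set $M := \norm{u}_{L^\infty(0,T;E)}$. The first step is to upgrade ``$u(t)\in E$ for a.e.\ $t$'' to ``$u(t)\in E$ for every $t$'', with $\norm{u(t)}_E \le M$ throughout. The set $G:=\{t: u(t)\in E,\ \norm{u(t)}_E\le M\}$ has full measure, hence is dense; given $t\in[0,T]$, choose $t_n\in G$ with $t_n\to t$. By reflexivity of $E$ the ball $M\,\overline{B}_E$ is weakly sequentially compact, so along a subsequence $u(t_{n_k})\wto \xi$ in $E$ with $\norm{\xi}_E\le M$; applying $i$, also $u(t_{n_k})\wto \xi$ in $F$. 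On the other hand $u(t_{n_k})\to u(t)$ weakly in $F$ by weak continuity into $F$, and weak limits in $F$ are unique, so $\xi=u(t)$. Hence $u(t)\in E$ with $\norm{u(t)}_E\le M$ for all $t$.

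The second step is weak continuity into $E$. I would argue by contradiction: if it fails at some $t$, there are $\phi\in E^*$, $\varepsilon>0$ and $s_n\to t$ with $\abs{\ip{\phi}{u(s_n)-u(t)}}\ge\varepsilon$ for all $n$. Since $\norm{u(s_n)}_E\le M$, reflexivity yields a subsequence with $u(s_{n_k})\wto\eta$ in $E$, and exactly the two-line argument of the first step (push forward by $i$, use weak continuity into $F$, use uniqueness of weak limits in $F$) forces $\eta=u(t)$; then $\ip{\phi}{u(s_{n_k})}\to\ip{\phi}{u(t)}$, contradicting the choice of the sequence. Hence $u\in C_w([0,T];E)$. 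The one delicate point is the passage from ``a.e.'' to ``everywhere'' in the first step: this is precisely where reflexivity of $E$ is indispensable, since without it a bounded sequence in $E$ need not have a weakly convergent subsequence and the value $u(t)$ at points outside $G$ could not be located inside $E$. Density of $E$ in $F$ plays no essential role in this scheme beyond making $C_w([0,T];F)$ the natural ambient space, so I would not rely on it.
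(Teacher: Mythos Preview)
The paper does not prove this lemma; it is stated without proof and attributed to Strauss \cite[Theorem~2.1]{Strauss}. Your argument is correct and is essentially the standard one: extract weakly convergent subsequences in $E$ using reflexivity, and identify their limits via the weak continuity into $F$ together with uniqueness of weak limits. Your observation that the density of $E$ in $F$ is not actually used in this scheme is also correct; only the injectivity and continuity of the embedding $E\embed F$ enter.
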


The next result is a classical integration-by-parts formula, whose
proof can be found, for instance, in \cite[{\S}1.3]{barbu}. Let
$\mathcal V$ and $\mathcal H$ be Hilbert spaces such that
$\mathcal V \embed\mathcal H \embed\mathcal V^*$, and denote by
$W(a,b;\mathcal V)$ the set of functions $u \in L^2(a,b;\mathcal V)$
such that $u' \in L^2(a,b;\mathcal V^*)$, where the derivative $u'$ is
meant in the sense of $\mathcal V^*$-valued distributions. The duality
of $\mathcal V$ and $\mathcal V^*$ as well as the scalar product of
$\mathcal H$ will be denoted by $\ip{\cdot}{\cdot}$.
\begin{lemma}   \label{lm:AC} 
  Let $u \in W(a,b;\mathcal V)$. Then there exists
  $\tilde{u} \in C([a,b];\mathcal H)$ such that $u(t)=\tilde{u}(t)$
  for almost all $t \in [a,b]$. Moreover, for any
  $v \in W(a,b;\mathcal V)$, $\ip{u}{v}$ is absolutely continuous on
  $[a,b]$ and
  \[
    \frac{d}{dt} \ip[\big]{u(t)}{v(t)} = \ip[\big]{u'(t)}{v(t)} +
    \ip[\big]{u(t)}{v'(t)}.
  \]
\end{lemma}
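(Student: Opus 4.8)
The plan is to prove the classical integration-by-parts formula for functions in $W(a,b;\mathcal V)$ by a standard density-and-regularization argument, since the statement asserts two things: the existence of a continuous $\mathcal H$-valued representative, and the product rule for the pairing of two such functions. I would begin by establishing the scalar identity for smooth functions and then transfer it to the general case by mollification in time. The key point throughout is the Gel'fand-triple structure $\mathcal V \embed \mathcal H \embed \mathcal V^*$, which makes the duality pairing $\ip{\cdot}{\cdot}$ compatible with the $\mathcal H$-inner product on elements of $\mathcal H$.

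First I would treat the case $u=v$ and derive the identity $\frac{d}{dt}\norm{u(t)}_{\mathcal H}^2 = 2\ip{u'(t)}{u(t)}$ for smooth $\mathcal V$-valued functions, where the right-hand side is integrable because $u \in L^2(a,b;\mathcal V)$ and $u' \in L^2(a,b;\mathcal V^*)$, so that $t \mapsto \ip{u'(t)}{u(t)}$ lies in $L^1(a,b)$ by the Cauchy--Schwarz inequality in the duality pairing. Next I would regularize a general $u \in W(a,b;\mathcal V)$ by convolution with a smooth mollifier (after a suitable extension past the endpoints), obtaining $u_\varepsilon \to u$ in $L^2(a,b;\mathcal V)$ with $u_\varepsilon' \to u'$ in $L^2(a,b;\mathcal V^*)$. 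Applying the smooth identity to $u_\varepsilon - u_\delta$ and passing to the limit shows that $(u_\varepsilon)$ is Cauchy in $C([a,b];\mathcal H)$, whence the continuous representative $\tilde u$ exists and the function $t \mapsto \norm{\tilde u(t)}_{\mathcal H}^2$ is absolutely continuous with the asserted derivative.

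Having secured the diagonal case, the general bilinear product rule follows immediately by polarization: writing $\ip{u(t)}{v(t)} = \tfrac14\bigl(\norm{u(t)+v(t)}_{\mathcal H}^2 - \norm{u(t)-v(t)}_{\mathcal H}^2\bigr)$ and noting that $u \pm v \in W(a,b;\mathcal V)$, the absolute continuity of $t \mapsto \ip{u(t)}{v(t)}$ and the product-rule formula both transfer from the two diagonal identities. The differentiation yields $\ip{u'(t)}{v(t)} + \ip{u(t)}{v'(t)}$ after expanding the squares, using again the compatibility of the pairing with the $\mathcal H$-scalar product on the continuous representatives.

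The main obstacle is the mollification step at the endpoints of the interval: convolution is naturally defined on the whole line, so one must first extend $u$ to a slightly larger interval in a way that preserves membership in the relevant Bochner spaces and the relation between $u$ and $u'$ as a distributional derivative. I would handle this by a reflection or a cutoff-and-extension argument, taking care that the extended function still has its $\mathcal V^*$-valued weak derivative in $L^2$. Since this is a well-known result, I would in practice cite the detailed construction in \cite[{\S}1.3]{barbu} for the endpoint technicalities and present only the structure above; the one genuinely nontrivial ingredient is the Cauchy estimate in $C([a,b];\mathcal H)$, which relies on the fact that the cross term $\ip{u'(t)}{u(t)}$ is controlled purely through the duality pairing without ever needing $u'$ to take values in $\mathcal H$.
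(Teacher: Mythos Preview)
The paper does not prove this lemma at all: it is stated as a classical result and simply referred to \cite[{\S}1.3]{barbu} for the proof. Your outlined argument (mollification in time, Cauchy estimate in $C([a,b];\mathcal H)$ via the diagonal identity, then polarization) is precisely the standard proof one finds in that reference, and you yourself note you would cite it for the endpoint technicalities; so your proposal is correct and in fact more detailed than what the paper provides.
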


The following compactness criterion is due to Simon, see \cite[Corollary~4,
p.~85]{Simon}.
\begin{lemma}
  \label{lm:Simon}
  Let $E_1$, $E_2$, $E_3$ be Banach spaces such that $E_1 \embed E_2$
  and $E_2 \embed E_3$ compactly. \luca{Assume that
  $F$ is a bounded subset of $L^p(0,T;E_1)\cap W^{1,1}(0,T; E_3)$
  for some $p \geq 1$.} Then
  $F$ is relatively compact in $L^p(0,T;E_2)$.
\end{lemma}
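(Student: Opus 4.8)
The plan is to show that $F$ is totally bounded in the complete space $L^p(0,T;E_2)$ (we take $p<\infty$, the only case the statement covers). By a classical vector-valued Kolmogorov--Riesz compactness criterion in Bochner spaces (see, e.g., \cite[Theorem~1]{Simon}), and noting that $F$ is bounded in $L^p(0,T;E_2)$ because $E_1\embed E_2$, this reduces to verifying: \emph{(a)} for every $0<t_1<t_2<T$ the set $\bigl\{\int_{t_1}^{t_2}f(s)\,ds:f\in F\bigr\}$ is relatively compact in $E_2$; and \emph{(b)} $\sup_{f\in F}\norm{f(\cdot+h)-f(\cdot)}_{L^p(0,T-h;E_2)}\to 0$ as $h\to 0$. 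Let $M$ bound $\norm{f}_{L^p(0,T;E_1)}$ and $M'$ bound $\norm{f'}_{L^1(0,T;E_3)}$ uniformly over $F$, where $f'$ denotes the $E_3$-valued distributional derivative; recall that such $f$ has an absolutely continuous representative with $f(t+h)-f(t)=\int_t^{t+h}f'(s)\,ds$ in $E_3$.

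First I would record the Ehrling--Lions interpolation inequality: using the compact embedding $E_1\embed E_2$ (together with $E_2\embed E_3$), for every $\eta>0$ there is $C_\eta$ with $\norm{u}_{E_2}\leq\eta\norm{u}_{E_1}+C_\eta\norm{u}_{E_3}$ for all $u\in E_1$; this is the standard argument by contradiction (a sequence violating it, renormalised to unit $E_2$-norm, is bounded in $E_1$ and tends to $0$ in $E_3$, hence converges in $E_2$ along a subsequence to a unit vector that must vanish in $E_3$, and therefore in $E_2$---absurd). Then \emph{(a)} is immediate: by H\"older and the Bochner estimate $\norm{\int_{t_1}^{t_2}f}_{E_1}\leq(t_2-t_1)^{1-1/p}M$, so these integrals form a bounded subset of $E_1$, which is relatively compact in $E_2$ by the compact embedding. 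For \emph{(b)} I would fix $\eta>0$, apply the Ehrling inequality pointwise in $t$ to $f(t+h)-f(t)$, and take $L^p$ norms in time to obtain
\[
\norm{f(\cdot+h)-f(\cdot)}_{L^p(0,T-h;E_2)}\leq 2\eta M+C_\eta\norm{f(\cdot+h)-f(\cdot)}_{L^p(0,T-h;E_3)}.
\]
From $f(t+h)-f(t)=\int_t^{t+h}f'$ one gets both the pointwise bound $\norm{f(t+h)-f(t)}_{E_3}\leq M'$ for almost every $t$ and, by a Fubini argument, $\norm{f(\cdot+h)-f(\cdot)}_{L^1(0,T-h;E_3)}\leq hM'$; hence, by the elementary inequality $\norm{g}_{L^p}\leq\norm{g}_{L^\infty}^{1-1/p}\norm{g}_{L^1}^{1/p}$, the last term above is at most $C_\eta M' h^{1/p}$, uniformly over $F$. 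Choosing first $\eta$ so that $2\eta M<\varepsilon/2$ and then $h$ small enough yields \emph{(b)}.

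The step I expect to be the main obstacle is \emph{(b)}: the conclusion must hold in the intermediate norm $E_2$, whereas the hypotheses give control only in $E_1$ (too strong to behave well under time translations) and on $f'$ in $E_3$ (too weak, by itself, for compactness), so it is precisely the Ehrling inequality that bridges the two; within \emph{(b)} the only mildly technical substep is upgrading the $L^1$-in-time translate bound to an $L^p$-in-time one, which is what the $L^p$--$L^1$--$L^\infty$ interpolation above accomplishes. I do not anticipate further difficulties; one should only note that the case $p=\infty$, excluded from the statement, would instead require proving relative compactness in $C([0,T];E_2)$ via Ascoli's theorem, using the uniform $C([0,T];E_3)$-equicontinuity that follows from the derivative bound together with the $E_1$-bound and, again, the Ehrling inequality.
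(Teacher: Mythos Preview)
The paper does not prove this lemma; it is stated in the preliminaries with a direct citation to Simon \cite[Corollary~4, p.~85]{Simon} and used as a black box. Your proof is correct and is essentially the standard route to Simon's result: the Kolmogorov--Riesz criterion for Bochner spaces reduces the problem to (a) relative compactness of time averages and (b) uniform translation continuity, and the Ehrling interpolation inequality is exactly the device that converts $E_1$-boundedness plus $E_3$-derivative control into $E_2$-equicontinuity. The interpolation $\norm{g}_{L^p}\leq\norm{g}_{L^\infty}^{1-1/p}\norm{g}_{L^1}^{1/p}$ to upgrade the $L^1$ translate estimate to $L^p$ is clean and correct. There is nothing to compare against in the paper itself; you have supplied a self-contained argument where the authors chose to cite.
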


\ifbozza\newpage\else\fi
\section{Well-posedness for a regularized equation}
\label{sec:reg}
Let $V_0$ be a separable Hilbert space such that $V_0$ is a dense
subset of $V$, $V_0 \embed V$, and $V_0 \embed L^\infty(D)$. The goal
of this section is to establish existence and uniqueness of solutions
to the stochastic evolution equation
\begin{equation}
\label{eq:V0}
dX(t) + AX(t)\,dt + \beta(X(t))\,dt \ni B(t)\,dW(t),
\qquad X(0)=X_0,
\end{equation}
where $B$ is an $\cL^2(U,V_0)$-valued process. In particular, this
stochastic equation can be interpreted as a version of \eqref{eq:0}
with additive and more regular noise.

\begin{prop}\label{prop:V0}
  Assume that $X_0\in L^2(\Omega,\cF_0,\P; H)$ and that
  \[
  B \in L^2(\Omega;L^2(0,T;\cL^2(U,V_0)))
  \]
  is measurable and adapted.  Then equation \eqref{eq:V0} admits a
  unique strong solution $(X,\xi)$ such that
  \begin{gather*}
    X \in L^2(\Omega;L^\infty(0,T;H)) \cap L^2(\Omega;L^2(0,T;V)),\\
    j(X)+j^*(\xi)\in L^1((0,T)\times D) \qquad \text{$\P$-almost surely}.
  \end{gather*}
  Moreover, $X(\omega,\cdot) \in C_w([0,T];H)$ for $\P$-almost all
  $\omega \in \Omega$.
\end{prop}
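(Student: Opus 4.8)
The plan is to regularize $\beta$ by its Yosida approximation $\beta_\lambda$, which is monotone and Lipschitz continuous, so that the approximate equation
\[
dX_\lambda + AX_\lambda\,dt + \beta_\lambda(X_\lambda)\,dt = B\,dW, \qquad X_\lambda(0)=X_0,
\]
falls squarely within the classical variational theory of Pardoux--Krylov--Rozovski\u{\i}: here $V \embed H \embed V^*$ is the Gel{\textquotesingle}fand triple, $A+\beta_\lambda(\cdot) \in \cL(V,V^*)$ is coercive by Assumption~A(i) (the zeroth-order term $\beta_\lambda$ only helps, being monotone with $\beta_\lambda(0)=0$), hemicontinuous, and of linear growth, and $B$ is an additive square-integrable diffusion. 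Thus for each $\lambda>0$ we obtain a unique solution
\[
X_\lambda \in L^2(\Omega;C([0,T];H)) \cap L^2(\Omega;L^2(0,T;V)), \qquad X_\lambda' \in L^2(\Omega;L^2(0,T;V^*)),
\]
satisfying It\^o's formula for $\norm{X_\lambda}_H^2$.

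Next I would derive $\lambda$-uniform a priori estimates. Applying It\^o's formula to $\tfrac12\norm{X_\lambda(t)}_H^2$, using coercivity $\ip{AX_\lambda}{X_\lambda} \geq C\norm{X_\lambda}_V^2$, the monotonicity inequality $\ip{\beta_\lambda(X_\lambda)}{X_\lambda} \geq 0$ (in fact $\ip{\beta_\lambda(X_\lambda)}{X_\lambda} = \int_D \beta_\lambda(X_\lambda)X_\lambda \geq \int_D j_\lambda(X_\lambda) \geq 0$, where $j_\lambda$ is the Moreau--Yosida regularization of $j$), the Burkholder--Davis--Gundy inequality on the martingale term, and Gronwall, one gets
\[
\E\operatorname*{ess\,sup}_{t\in[0,T]}\norm{X_\lambda(t)}_H^2 + \E\int_0^T\norm{X_\lambda(s)}_V^2\,ds \lesssim \E\norm{X_0}_H^2 + \E\int_0^T\norm{B(s)}_{\cL^2(U,H)}^2\,ds,
\]
with a constant independent of $\lambda$. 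A further crucial estimate controls $j_\lambda(X_\lambda)$ and $j^*(\beta_\lambda(X_\lambda))$ in $L^1(\Omega\times(0,T)\times D)$: from the It\^o identity one also extracts $\E\int_0^T\!\!\int_D \beta_\lambda(X_\lambda)X_\lambda \lesssim \text{(data)}$, and by the Young equality (property (e)), $\beta_\lambda(X_\lambda)X_\lambda = j_\lambda(X_\lambda) + j^*(\beta_\lambda(X_\lambda))$ pointwise (since $\beta_\lambda(X_\lambda) \in \beta((I+\lambda\beta)^{-1}X_\lambda)$ by property (b), one needs a small argument to pass to $j^*$ of the Yosida term — this is where the superlinearity (f) and lower semicontinuity of convex integrals enter). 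The additive, more regular noise $B \in \cL^2(U,V_0)$ with $V_0 \embed L^\infty(D)$ is what makes the stochastic-convolution part well behaved; in particular $Z := B\cdot W$ lies in $L^2(\Omega;C([0,T];V_0))$, hence is bounded in $L^\infty((0,T)\times D)$ pathwise, so $j(Z)$ and $j^*(\beta(Z))$ are pathwise integrable.

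Then I would pass to the limit $\lambda \to 0$. The uniform estimates give, along a subsequence, $X_\lambda \wto X$ weakly-$*$ in $L^2(\Omega;L^\infty(0,T;H))$ and weakly in $L^2(\Omega;L^2(0,T;V))$, and $\beta_\lambda(X_\lambda) \wto \xi$ weakly in $L^1(\Omega\times(0,T)\times D)$ by the Dunford--Pettis criterion (de la Vall\'ee-Poussin, using the superlinear bound coming from $\E\int j^*(\beta_\lambda(X_\lambda)) < \infty$). The main obstacle — as usual in monotone SPDE with genuinely nonlinear, non-Lipschitz $\beta$ — is identifying $\xi \in \beta(X)$: weak convergence alone does not close the nonlinearity. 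I expect this to be handled by a pathwise compactness argument: write $Y_\lambda := X_\lambda - Z$, which solves the random PDE $Y_\lambda' + AY_\lambda + \beta_\lambda(Y_\lambda+Z) = -AZ$ with $Y_\lambda' \in L^1(0,T;V^*+L^1(D))$ bounded pathwise (using the $L^1$-bound on $\beta_\lambda(X_\lambda)$), so Simon's criterion (Lemma~\ref{lm:Simon}) with a triple $V \embed H \embed (V^* + L^1(D))$ — or rather $V \embed E_2 \embed V^*$ for a suitable intermediate $E_2$ — gives, $\P$-a.s., strong convergence $X_\lambda \to X$ in $L^2(0,T;H)$, hence $X_\lambda \to X$ a.e.\ on $(0,T)\times D$ along a further subsequence. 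With a.e.\ convergence of $X_\lambda$ and the $L^1$-bound on $\beta_\lambda(X_\lambda)X_\lambda$, Br\'ezis' lemma (Lemma~\ref{lm:Brezis}) applies on the finite measure space $\Omega\times(0,T)\times D$ (after checking the $L^1$-boundedness of the product, which follows from the a priori estimates) to yield $\xi \in \beta(X)$ a.e., together with $\beta_\lambda(X_\lambda)\wto\xi$ in $L^1$; lower semicontinuity of the convex integrals $\int j(\cdot)$ and $\int j^*(\cdot)$ then transfers the integrability $j(X)+j^*(\xi) \in L^1((0,T)\times D)$ $\P$-a.s. Passing to the limit in the (linear) remaining terms of the equation is routine, giving that $(X,\xi)$ is a strong solution. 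Uniqueness follows from It\^o's formula applied to the difference of two solutions together with the monotonicity of both $A$ and $\beta$ (the cross term $\ip{\xi_1-\xi_2}{X_1-X_2} \geq 0$) and Gronwall. Finally, weak continuity of paths: $X \in L^\infty(0,T;H)$ pathwise, and from the equation $X = X_0 - \int_0^\cdot(AX+\xi) + Z$ is (a.s.) continuous with values in $V^* + L^1(D)$, hence weakly continuous into that space; since $H$ is reflexive and densely embedded, Strauss' lemma (Lemma~\ref{lm:Strauss}) upgrades this to $X \in C_w([0,T];H)$ $\P$-a.s.
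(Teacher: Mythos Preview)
Your overall architecture is right and matches the paper's: Yosida-regularize, get uniform estimates, use pathwise Simon compactness to upgrade weak to strong convergence of $X_\lambda$, then identify $\xi\in\beta(X)$ via Br\'ezis' lemma, and finish with Strauss for weak continuity. Two steps, however, are genuine gaps as written.

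\textbf{Subsequences and measurability.} Simon's criterion, applied pathwise, gives only \emph{relative compactness}: for each $\omega$ you get a subsequence $\lambda'(\omega)$ along which $X_{\lambda'}(\omega)\to X(\omega)$ in $L^2(0,T;H)$. You then propose to apply Br\'ezis' lemma on the product space $\Omega\times(0,T)\times D$, but that requires a.e.\ convergence along a \emph{fixed} subsequence, which you have not established. The paper closes this by first proving \emph{pathwise uniqueness} of the limit $(X(\omega),\xi(\omega))$ (its Step~2); uniqueness then forces full-sequence convergence $X_\lambda(\omega)\to X(\omega)$ for a.e.\ $\omega$, from which measurability and predictability of $X$ follow. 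For $\xi$ even this is not enough, since $\beta_\lambda(X_\lambda(\omega))$ converges only weakly in $L^1((0,T)\times D)$; the paper shows separately that $\beta_\lambda(X_\lambda)\wto\xi$ weakly in $L^1(\Omega\times(0,T)\times D)$ and uses Mazur's lemma to obtain predictability. Br\'ezis' lemma is applied \emph{pathwise} on $(0,T)\times D$, not on the product.

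\textbf{Uniqueness.} Your ``It\^o's formula on the difference plus monotonicity'' does not go through as stated: the difference satisfies a deterministic equation, but its time derivative contains $\xi_1-\xi_2$, which is only in $L^1((0,T)\times D)$, not in $L^2(0,T;V^*)$, so neither the variational It\^o formula nor Lemma~\ref{lm:AC} applies, and the pairing $\ip{\xi_1-\xi_2}{X_1-X_2}$ is not a priori defined. The paper's remedy is to apply $(I+\delta A)^{-m}$ (Assumption~A(iv)) to push everything into $H$, compute there, and pass to the limit $\delta\to0$; the key step is controlling $\int X^\delta\zeta^\delta$ uniformly in $\delta$, which uses the abstract Jensen inequality (Lemma~\ref{lm:J}) for the sub-Markovian resolvent together with $j(X_i)+j^*(\xi_i)\in L^1$ and Vitali's theorem. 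This regularization is precisely why hypotheses A(iii) and A(iv) are needed, and it is not a routine Gronwall argument.
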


The rest of this section is devoted to the proof of Proposition
\ref{prop:V0}, which is structured as a follows: we consider a
regularized version of \eqref{eq:V0}, where the nonlinear term $\beta$
is replaced by its Yosida approximation, and obtain suitable a priori
estimates, both pathwise and in expectation. Taking limits in
appropriate topologies of the solutions to these regularized
equations, we construct solutions to \eqref{eq:V0}, that are finally
shown to be unique.

\medskip
Let
\[
\beta_\lambda := \frac{1}{\lambda}\bigl( I - (I+\lambda\beta)^{-1} \bigr),
\qquad \lambda>0,
\]
be the Yosida approximation of $\beta$, and consider the regularized
equation
\[
dX_\lambda(t) + AX_\lambda(t)\,dt + \beta_\lambda(X_\lambda(t))\,dt =
B(t)\,dW(t), \qquad X_\lambda(0)=X_0.
\]
Since $\beta_\lambda$ is monotone and Lipschitz continuous, it is easy
to check that the operator $A+\beta_\lambda$ satisfies, for any
$\lambda>0$, the classical conditions of Pardoux, Krylov and
Rozovski\u{\i} \cite{KR-spde,Pard}. For completeness, a proof is given
next.  \luca{
\begin{lemma}
  Let $\lambda>0$. The operator
  $A_\lambda:=A+\beta_\lambda:V \to V^*$ satisfies the
  following conditions:
  \begin{itemize}
  \item[\emph{(i)}] $A_\lambda$ is hemicontinuous, i.e. the map $\erre
    \ni \eta \mapsto \ip{A_\lambda(u+\eta v)}{x}$ is continuous for
    all $u$, $v$, $x \in V$;
  \item[\emph{(ii)}] $A_\lambda$ is monotone, i.e.  $\ip{A_\lambda u -
      A_\lambda v}{u-v} \geq 0$ for all $u$, $v \in V$;
  \item[\emph{(iii)}] $A_\lambda$ is coercive, i.e. there exists a
    constant $C_1>0$ such that $\ip{A_\lambda v}{v} \geq C_1
    \norm{v}_{V}^2$ for all $v \in V$;
  \item[\emph{(iv)}] $A_\lambda$ is bounded, i.e. there exists a
    constant $C_2>0$ such that $\norm{A_\lambda v}_{V^*} \leq
    C_2\norm{v}_{V}$ for all $v \in V$.
  \end{itemize}
\end{lemma}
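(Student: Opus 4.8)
The plan is to reduce all four properties to the elementary observation that, since $\beta_\lambda:\erre\to\erre$ is monotone, globally Lipschitz with constant $1/\lambda$, and vanishes at the origin (indeed $0\in\beta(0)$ gives $(I+\lambda\beta)^{-1}(0)=0$, whence $\beta_\lambda(0)=0$), the associated superposition operator $u\mapsto\beta_\lambda(u(\cdot))$ maps $H=L^2(D)$ into itself, is Lipschitz continuous with constant $1/\lambda$, and satisfies the bound $\norm{\beta_\lambda(u)}_H\leq\lambda^{-1}\norm{u}_H$. Composing with the continuous, dense embeddings $V\embed H\embed V^*$, one obtains that $\beta_\lambda$, seen as a map $V\to V^*$, is well defined and (Lipschitz) continuous. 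After this, $A_\lambda=A+\beta_\lambda$ is the sum of a bounded linear operator and a Lipschitz one, and the verifications are routine.

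For (iv), I would use the triangle inequality in $V^*$ together with the above growth bound: $\norm{A_\lambda v}_{V^*}\leq\norm{A}_{\cL(V,V^*)}\norm{v}_V+N\lambda^{-1}\norm{v}_H$, where $N$ is the norm of the embedding $H\embed V^*$, and then bound $\norm{v}_H\lesssim\norm{v}_V$. For (iii), I split $\ip{A_\lambda v}{v}=\ip{Av}{v}+\ip{\beta_\lambda(v)}{v}$, observe that the $H$-pairing $\ip{\beta_\lambda(v)}{v}=\int_D\beta_\lambda(v)\,v$ is nonnegative because $r\mapsto r\beta_\lambda(r)\geq0$ (monotonicity of $\beta_\lambda$ together with $\beta_\lambda(0)=0$), and conclude with the coercivity constant $C$ of $A$; thus $C_1=C$ works.

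For (ii), I split again: $\ip{A_\lambda u-A_\lambda v}{u-v}=\ip{A(u-v)}{u-v}+\ip{\beta_\lambda(u)-\beta_\lambda(v)}{u-v}$, the first summand being $\geq C\norm{u-v}_V^2\geq0$ by coercivity and the second being $\int_D\bigl(\beta_\lambda(u)-\beta_\lambda(v)\bigr)(u-v)\geq0$ pointwise by monotonicity of $\beta_\lambda$ on the line. For (i), the linear term contributes the affine function $\eta\mapsto\ip{Au}{x}+\eta\ip{Av}{x}$, which is continuous; for the nonlinear term, given $\eta\to\eta_0$ one has $\beta_\lambda(u+\eta v)\to\beta_\lambda(u+\eta_0 v)$ pointwise a.e. by continuity of $\beta_\lambda$, with the uniform domination $\abs{\beta_\lambda(u+\eta v)}\leq\lambda^{-1}\bigl(\abs{u}+(\abs{\eta_0}+1)\abs{v}\bigr)\in L^2(D)$ for $\eta$ near $\eta_0$, so dominated convergence gives convergence in $H$, hence in $V^*$, and the pairing with $x$ converges.

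There is no real difficulty here: the one point to record with care is that $\beta_\lambda$ induces a Lipschitz map on $H$, which hinges on $\beta_\lambda(0)=0$; once this and the continuity of the Gel'fand-triple embeddings are in hand, the argument is entirely standard and uses neither the compactness of $V\embed H$ nor any property of $\beta$ beyond maximal monotonicity and $0\in\beta(0)$.
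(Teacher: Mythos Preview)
Your proof is correct and follows essentially the same route as the paper's: split $A_\lambda=A+\beta_\lambda$, use linearity/coercivity of $A$ for the first summand, and use that $\beta_\lambda$ is Lipschitz with $\beta_\lambda(0)=0$ (hence the superposition operator is Lipschitz on $H$) for the second. The only cosmetic differences are that for (i) you spell out the dominated-convergence argument where the paper simply invokes Lipschitz continuity, and for (iv) you bound $\norm{A_\lambda v}_{V^*}$ directly via the triangle inequality rather than by testing against $u\in V$; the constants obtained are the same.
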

\begin{proof}
  (i) For any $u,v,x\in V$, one has
  \[
  \ip{A_\lambda(u+\eta v)}{x} =
  \ip{Au}{x} + \eta \ip{Av}{x} + \int_D \beta_\lambda(u+\eta v)x.
  \]
  It clearly suffices to check that the last term depends continuously
  on $\eta$, which follows immediately by the Lipschitz continuity of
  $\beta_\lambda$. (ii) Since both $A$ and $\beta_\lambda$ are
  monotone, one has
  \[
  \ip{A_\lambda u-A_\lambda v}{u-v} =
  \ip{Au - Av}{u-v} 
  + \int_D (\beta_\lambda(u)-\beta_\lambda(v) (u-v) \geq 0.
  \]
  (iii) Similarly, since $0 \in \beta(0)$ implies
  $\beta_\lambda(0)=0$, coercivity of $A$ and monotonicity of
  $\beta_\lambda$ imply
  \[
  \ip{A_\lambda v}{v} = \ip{Av}{v} 
  + \int_D \beta_\lambda(v)v \geq \ip{Av}{v} \geq C\norm{v}_V^2
  \]
  (in particular, $C_1$ can be chosen equal to $C$, the coercivity
  constant of $A$ itself).  (iv) Using again the fact that
  $\beta_\lambda(0)=0$, and recalling that $\beta_\lambda$ is
  Lipschitz continuous with Lipschitz constant bounded by $1/\lambda$,
  one has
  \begin{align*}
  \ip{A_\lambda v}{u}&=
  \ip{Av}{u} + \int_D \beta_\lambda(v)u 
  \leq \norm{Av}_{V^*} \norm{u}_V +\frac1\lambda \norm{v}_{H}\norm{u}_{H}\\
  &\leq \bigl( \norm{A}_{\cL(V,V^*)} + k/\lambda \bigr) \norm{v}_{V}\norm{u}_{V},
  \end{align*}
  where $k$ is the norm of the continuous embedding $\iota: V \to H$.
\end{proof}
}
Hence \eqref{eq:reg} admits a
unique variational solution, that is, there exists a unique adapted
process
\[
X_\lambda \in L^2(\Omega;C([0,T];H)) \cap L^2(\Omega;L^2(0,T;V))
\]
such that, in $V^*$,
\begin{equation}\label{eq:reg}
  X_\lambda(t) + \int_0^t AX_\lambda(s)\,ds + \int_0^t \beta_\lambda(X_\lambda(s))\,ds
  = X_0 + \int_0^t B(s)\,dW(s)  
\end{equation}
for all $t \in [0,T]$.

In the next lemmata we establish a priori estimates for $X_\lambda$
and $\beta_\lambda(X_\lambda)$. We begin with a pathwise estimate.
\begin{lemma}\label{lm:stp}
  There exists $\Omega' \subseteq \Omega$ with $\P(\Omega')=1$ and
  $M:\Omega' \to \erre$ such that
  \[
  \norm[\big]{X_\lambda(\omega)}^2_{C([0,T];H) \cap L^2(0,T;V)} + 
  \norm[\big]{j_\lambda(X_\lambda(\omega))}_{L^1(0,T;L^1(D))} < M(\omega)
  \]
  for all $\omega \in \Omega'$.
\end{lemma}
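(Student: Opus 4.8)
The plan is to apply the It\^o formula to the function $x \mapsto \norm{x}_H^2$ along the variational solution $X_\lambda$ of \eqref{eq:reg}, which is legitimate since $X_\lambda \in L^2(0,T;V)$ with $X_\lambda' + B\cdot W$-drift part in $L^2(0,T;V^*)$ (here we use Lemma~\ref{lm:AC}, i.e.\ the integration-by-parts formula, together with the classical It\^o formula for the square of the $H$-norm in the Gelfand-triple setting). This yields, for all $t \in [0,T]$,
\begin{equation*}
  \norm{X_\lambda(t)}_H^2 + 2\int_0^t \ip{AX_\lambda(s)}{X_\lambda(s)}\,ds
  + 2\int_0^t \!\!\int_D \beta_\lambda(X_\lambda(s))X_\lambda(s)\,dx\,ds
  = \norm{X_0}_H^2 + 2\int_0^t \ip{X_\lambda(s)}{B(s)\,dW(s)}
  + \int_0^t \norm{B(s)}_{\cL^2(U,H)}^2\,ds.
\end{equation*}
The two terms on the left-hand side involving $A$ and $\beta_\lambda$ are nonnegative: the first by coercivity, $\ip{AX_\lambda}{X_\lambda} \geq C\norm{X_\lambda}_V^2 \geq 0$, and the second because $\beta_\lambda$ is monotone with $\beta_\lambda(0)=0$, so $\beta_\lambda(r)r \geq 0$ pointwise. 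Hence, dropping the $\beta_\lambda$ term (but keeping the $V$-term, which we shall need), one obtains a pathwise inequality controlling $\norm{X_\lambda(t)}_H^2 + 2C\int_0^t\norm{X_\lambda(s)}_V^2\,ds$ by $\norm{X_0}_H^2$, the stochastic integral, and $\int_0^T\norm{B(s)}_{\cL^2(U,H)}^2\,ds$.

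Next I would take the essential supremum over $t \in [0,T]$. The only problematic term is the stochastic integral $\int_0^t \ip{X_\lambda(s)}{B(s)\,dW(s)}$. Here I would not invoke a Burkholder-type bound (that is for the expectation estimate, treated separately in the paper); instead, working pathwise, for $\P$-a.e.\ $\omega$ the process $s \mapsto \int_0^s \ip{X_\lambda(s)}{B(s)\,dW(s)}$ has $\P$-a.s.\ continuous trajectories, hence is $\P$-a.s.\ bounded on $[0,T]$ as a real-valued continuous function. Therefore there is a full-measure set $\Omega' \subseteq \Omega$ and a finite random variable bounding the supremum of this stochastic integral pathwise. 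Combined with $X_0 \in L^2(\Omega;H)$ (so $\norm{X_0(\omega)}_H < \infty$ a.s.) and $B \in L^2(\Omega;L^2(0,T;\cL^2(U,H)))$ (so $\int_0^T\norm{B(s,\omega)}_{\cL^2(U,H)}^2\,ds < \infty$ a.s., recalling $V_0 \embed H$), we conclude that $\norm{X_\lambda(\omega)}_{C([0,T];H)}^2 + \norm{X_\lambda(\omega)}_{L^2(0,T;V)}^2$ is bounded by some finite $M(\omega)$ for $\omega \in \Omega'$.

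Finally, to include $\norm{j_\lambda(X_\lambda(\omega))}_{L^1(0,T;L^1(D))}$, I would put the $\beta_\lambda$ term back: it is controlled by the same right-hand side, so $\int_0^T\!\int_D \beta_\lambda(X_\lambda)X_\lambda\,dx\,ds \leq M(\omega)$ after enlarging $M(\omega)$. It then suffices to compare $j_\lambda$ with $\beta_\lambda(\cdot)\,(\cdot)$. Recalling that $j_\lambda$ is the Moreau-Yosida regularization of the potential $j$ of $\beta$, with $j_\lambda'=\beta_\lambda$ and $j_\lambda(0)=0$, convexity gives $j_\lambda(r) = j_\lambda(r) - j_\lambda(0) \leq j_\lambda'(r)\,r = \beta_\lambda(r)\,r$ for all $r \in \erre$ (the subgradient inequality for the convex function $j_\lambda$ at the point $r$, tested against $0$). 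Integrating over $(0,T)\times D$ yields $\norm{j_\lambda(X_\lambda(\omega))}_{L^1} \leq M(\omega)$. The one point requiring a little care is the justification of the It\^o formula in the Gelfand-triple setting and the pathwise (rather than in-expectation) handling of the martingale term; I expect this to be the main, though standard, obstacle, and it is fully covered by the variational framework of Krylov-Rozovski\u{\i} and Lemma~\ref{lm:AC}.
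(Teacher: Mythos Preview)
Your argument has a genuine gap: the bound $M(\omega)$ you produce depends on $\lambda$, whereas the whole point of the lemma is that $M$ be \emph{uniform} in $\lambda$ (this is what allows the compactness arguments of Proposition~\ref{prop:cvg} to go through). The culprit is the stochastic integral $\int_0^t \ip{X_\lambda(s)}{B(s)\,dW(s)}$: it is indeed a.s.\ bounded on $[0,T]$ for each fixed $\lambda$, but the pathwise supremum depends on the integrand $X_\lambda$, hence on $\lambda$. Taking a countable family $\lambda_n \to 0$ you would need $M(\omega) = \sup_n M_n(\omega)$, and there is no reason for this to be finite.

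The paper circumvents this by \emph{removing the stochastic integral altogether}: it sets $Y_\lambda := X_\lambda - B\cdot W$ and works with the deterministic (random-coefficient) equation $Y_\lambda' + AX_\lambda + \beta_\lambda(X_\lambda) = 0$. Applying the integration-by-parts formula (or It\^o's formula, which here degenerates since there is no martingale part) to $\norm{Y_\lambda}_H^2$ yields an identity whose right-hand side is just $\norm{X_0}_H^2$. The price to pay is that the cross terms $\ip{AX_\lambda}{B\cdot W}$ and $\ip{\beta_\lambda(X_\lambda)}{B\cdot W}$ appear on the left; the first is handled by Young's inequality and the bound $\norm{B\cdot W}_{L^2(0,T;V)}$, while the second uses the subgradient inequality $\beta_\lambda(x)(x-y) \geq j_\lambda(x) - j(y)$, producing a term $\int j(B\cdot W)$. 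Crucially, all quantities on the right---$\norm{X_0}_H$, $\norm{B\cdot W}_{C([0,T];H)}$, $\norm{B\cdot W}_{L^2(0,T;V)}$, $\norm{j(B\cdot W)}_{L^1}$---are $\lambda$-free, and this is precisely where the hypothesis $V_0 \embed L^\infty(D)$ is used (to make $j(B\cdot W)$ pathwise integrable). Your direct It\^o approach is the right one for the \emph{in-expectation} estimate of Lemma~\ref{lm:aspetta}, where Davis' inequality disposes of the martingale term uniformly in $\lambda$; but pathwise, the substitution trick is essential.
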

\begin{proof}
  Setting $Y_\lambda := X_\lambda - B \cdot W$, It\^o's
  formula\footnote{Whenever we refer to It\^o's formula, we shall
    always mean the version in \cite{KR-spde}.} yields
  \[
  \norm[\big]{Y_\lambda(t)}^2_H 
  + 2\int_0^t \ip[\big]{AX_\lambda(s)}{Y_\lambda(s)}\,ds
  + 2\int_0^t \ip[\big]{\beta_\lambda(X_\lambda)}{Y_\lambda(s)}\,ds
  = \norm[\big]{X_0}^2_H,
  \]
  where
  $\norm{X_\lambda}_H \leq \norm{Y_\lambda}_H + \norm{B\cdot W}_H$ by
  the triangle inequality, hence
  \[
  \norm{Y_\lambda(t)}^2_H \geq \frac12 \norm{X_\lambda(t)}_H^2 -
  \norm{B \cdot W(t)}_H^2.
  \]
  Moreover, writing
  $ \ip{AX_\lambda}{Y_\lambda} = \ip{AX_\lambda}{X_\lambda} -
  \ip{AX_\lambda}{B \cdot W}$, one has
  \[
  \ip{AX_\lambda}{X_\lambda} \geq C \norm{X_\lambda}_V^2
  \]
  by the coercivity of $A$, and
  \begin{align*}
  \ip{AX_\lambda}{B \cdot W} &\leq \norm{A}_{\cL(V,V^*)} \norm{X_\lambda}_V
  \norm{B \cdot W}_V\\
  &\leq \frac12 C \norm{X_\lambda}_V^2 + \frac{1}{2\varepsilon}
  \norm{B \cdot W}_V^2,
  \end{align*}
  where we have used the elementary inequality
  $ab \leq \frac12(\varepsilon a^2+b^2/\varepsilon)$ for all $a,b \in \erre$,
  with $\varepsilon:=C\norm{A}_{\cL(V,V^*)}^{-2}$. Then
  \[
  \ip{AX_\lambda}{Y_\lambda} \geq 
  \frac12 C \norm{X_\lambda}_V^2 
  - \frac{1}{2\varepsilon} \norm{B \cdot W}_V^2,
  \]
  so that
  \[
  2\int_0^t \ip[\big]{AX_\lambda(s)}{Y_\lambda(s)}\,ds \geq
  C \int_0^t \norm{X_\lambda(s)}_V^2\,ds 
  - \frac{1}{\varepsilon} \int_0^t \norm{B\cdot W(s)}_V^2\,ds
  \]
  and
  \begin{equation}
  \label{eq:pippo}
  \begin{split}
  &\frac12 \norm{X_\lambda(t)}_H^2 + C \int_0^t  \norm{X_\lambda(s)}_V^2\,ds
  + 2 \int_0^t \ip[\big]{\beta_\lambda(X_\lambda(s))}{Y_\lambda(s)}\,ds\\
  &\hspace{3em} \leq \norm{X_0}_H^2 + \norm{B\cdot W(t)}_H^2
  + \frac1\varepsilon \int_0^t \norm{B\cdot W(s)}_V^2\,ds.
  \end{split}
  \end{equation}
  Let $j_\lambda$ be the Moreau-Yosida regularization of $j$, that is
  \[
  j_\lambda(x) := \inf_{y \in \erre} \Bigl( 
    j(y) + \frac{\abs{x-y}^2}{2\lambda} \Bigr), \qquad \lambda>0.
  \]
  We recall that $j_\lambda$ is a convex, proper differentiable
  function, with $j'_\lambda = \beta_\lambda$, that converges
  pointwise to $j$ from below. In particular,
  \[
  \beta_\lambda(x)(x-y) \geq j_\lambda(x)-j_\lambda(y)
  \geq j_\lambda(x)-j(y) \qquad \forall x,y \in \erre.
  \]
  This implies
  \begin{align*}
  \int_0^t \ip[\big]{\beta_\lambda(X_\lambda(s))}{Y_\lambda(s)}\,ds
  &= \int_0^t\!\!\int_D 
     \beta_\lambda(X_\lambda(s,x))(X_\lambda(s,x) - B \cdot W(s,x))\,dx\,ds\\
  &\geq \int_0^t\!\!\int_D j_\lambda(X_\lambda(s,x))\,dx\,ds
  - \int_0^t\!\!\int_D j(B\cdot W(s,x))\,dx\,ds,
  \end{align*}
  hence also
  \begin{align*}
  &\frac12 \norm{X_\lambda(t)}_H^2 + C \int_0^t  \norm{X_\lambda(s)}_V^2\,ds
  + 2 \int_0^t\!\!\int_D j_\lambda(X_\lambda(s,x))\,dx\,ds\\
  &\hspace{3em} \leq \norm{X_0}_H^2 + \norm{B\cdot W(t)}_H^2
  + \frac1\varepsilon \int_0^t \norm{B\cdot W(s)}_V^2\,ds\\
  &\hspace{3em}\quad + 2\int_0^t\!\!\int_D j(B\cdot W(s,x))\,dx\,ds.
  \end{align*}
  Taking the supremum with respect to $t$ yields
  \luca{
  \begin{align*}
  &\norm[\big]{X_\lambda}_{C([0,T];H)}^2
  + \norm[\big]{X_\lambda}_{L^2(0,T;V)}^2
  + \norm[\big]{j_\lambda(X_\lambda)}_{L^1(0,T;L^1(D))}\\
  &\hspace{3em} \lesssim \norm[\big]{X_0}_H^2
  + \norm[\big]{B\cdot W}_{C([0,T];H)}^2
  + \norm[\big]{B\cdot W}_{L^2(0,T;V)}^2
  + \norm[\big]{j(B\cdot W)}_{L^1(0,T;L^1(D))},
  \end{align*}}
  where the implicit constant depends only on the operator norm of
  $A$.  It follows by It\^o's isometry and Doob's inequality that
  \[
  \norm[\big]{B \cdot W}_{L^2(\Omega;C([0,T];V_0))} \lesssim
  \norm[\big]{B}_{\luca{L^2(\Omega;L^2(0,T;\cL^2(U,V_0)))}},
  \]
  where the right-hand side is finite by assumption, hence, recalling
  that $V_0$ is continuously embedded in $V$,
  \[
  \norm[\big]{B\cdot W}_{C([0,T];H)}
  + \norm[\big]{B\cdot W}_{L^2(0,T;V)} \lesssim_T
  \norm[\big]{B\cdot W}_{C([0,T];V_0)}.
  \]
  Analogously, denoting the norm of the continuous embedding
  $\iota: V_0 \to L^\infty(D)$ by $k$, one has, recalling that $j$ is
  symmetric and increasing on $\erre_+$,
  \[
  \norm[\big]{j(B\cdot W(t)}_{L^1(D)} \lesssim_{\abs{D}}
  j\bigl(\norm{B\cdot W(t)}_{L^\infty(D)}\bigr) \leq
  j\bigl(k\norm{B\cdot W(t)}_{V_0}\bigr),
  \]
  for all $t \in [0,T]$, hence
  \[
  \norm[\big]{j(B\cdot W)}_{L^1(0,T;L^1(D))}
  \lesssim_{\abs{D},T} j\bigl(k\norm{B\cdot W}_{C([0,T];V_0)}\bigr).
  \]
  The proof is complete choosing $\Omega' \subset \Omega$ such that
  $\norm{X_0(\omega)}_H$ and $\norm{B\cdot
    W(\omega)}_{C([0,T];V_0)}$ are finite for all $\omega \in
  \Omega'$, \luca{and defining $M:\Omega'\rightarrow\erre$ as
  \[
  M:=
  \norm[\big]{X_0}_H^2
  + \norm[\big]{B\cdot W}_{C([0,T];H)}^2
  + \norm[\big]{B\cdot W}_{L^2(0,T;V)}^2
  + \norm[\big]{j(B\cdot W)}_{L^1(0,T;L^1(D))}\,.
  \qedhere
  \]}
\end{proof}

\begin{rmk}\label{rmk:rtr}
  The above estimates can be obtained by purely deterministic
  arguments, without invoking It\^o's formula. In fact, note that
  equation \eqref{eq:reg} can equivalently be written as
  \[
  Y_\lambda(t) 
  + \int_0^t \bigl( AX_\lambda(s) + \beta_\lambda(X_\lambda(s))\bigr)\,ds = 0.
  \]
  One has $Y_\lambda \in L^2(0,T;V)$, which follows at
  once by the properties of $X_\lambda$ and by
  $B \cdot W \in L^2(\Omega;C([0,T];V_0))$. Similarly, since
  $AX_\lambda$ and $\beta_\lambda(X_\lambda)$ belong to
  $L^2(\Omega;L^2(0,T;V^*))$, one also has, by the previous identity,
  $Y'_\lambda \in L^2(0,T;V^*)$. In particular, there
  exists $\Omega' \subset \Omega$, with $\P(\Omega')=1$, such that
  \[
  Y_\lambda(\omega) \in L^2(0,T;V), \qquad Y'_\lambda(\omega) \in
  L^2(0,T;V^*) \qquad \forall \omega \in \Omega'.
  \]
  Lemma \ref{lm:AC} then yields
  \[
  \frac{1}{2} \norm[\big]{Y_\lambda(t)}^2_H 
  + \int_0^t \ip[\big]{AX_\lambda(s)}{Y_\lambda(s)}\,ds
  + \int_0^t \ip[\big]{\beta_\lambda(X_\lambda)}{Y_\lambda(s)}\,ds
  = \frac{1}{2} \norm[\big]{X_0}^2_H.
  \]
\end{rmk}

\begin{lemma}  \label{lm:aspetta}
  There exists a constant $N>0$ such that
  \begin{align*}
  &\norm[\big]{X_\lambda}_{L^2(\Omega;C([0,T];H))}^2 + 
  \norm[\big]{X_\lambda}_{L^2(\Omega;L^2(0,T;V)}^2 +
  \norm[\big]{\beta_\lambda(X_\lambda)X_\lambda}_{L^1(\Omega;L^1(0,T;L^1(D)))}\\
  &\hspace{3em} < N\Bigl(
  \norm[\big]{X_0}^2_{L^2(\Omega;H)} 
  + \norm[\big]{B}^2_{L^2(\Omega;L^2(0,T;\cL^2(U,H)))} \Bigr).
  \end{align*}
\end{lemma}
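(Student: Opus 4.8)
The plan is to apply It\^o's formula to $t \mapsto \norm{X_\lambda(t)}_H^2$, this time directly to $X_\lambda$ rather than to the shifted process $Y_\lambda$ used in the proof of Lemma~\ref{lm:stp}, so as to exploit that the stochastic integral thus produced has zero mean. From \eqref{eq:reg}, It\^o's formula yields, for every $t \in [0,T]$,
\begin{align*}
  &\norm{X_\lambda(t)}_H^2 + 2\int_0^t \ip{AX_\lambda(s)}{X_\lambda(s)}\,ds
  + 2\int_0^t\!\!\int_D \beta_\lambda(X_\lambda(s,x))X_\lambda(s,x)\,dx\,ds\\
  &\qquad = \norm{X_0}_H^2 + \int_0^t \norm{B(s)}_{\cL^2(U,H)}^2\,ds + 2M_t,
\end{align*}
where $M_t := \int_0^t \ip{X_\lambda(s)}{B(s)\,dW(s)}$, i.e. $M = (X_\lambda B)\cdot W$ in the notation of Lemma~\ref{lm:DY}. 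By the coercivity of $A$ one has $\ip{AX_\lambda}{X_\lambda} \geq C\norm{X_\lambda}_V^2 \geq 0$; moreover, since $\beta_\lambda$ is monotone with $\beta_\lambda(0)=0$, one has $\beta_\lambda(X_\lambda)X_\lambda \geq 0$ almost everywhere, so that $\int_0^t\!\!\int_D \beta_\lambda(X_\lambda)X_\lambda = \norm{\beta_\lambda(X_\lambda)X_\lambda}_{L^1(0,t;L^1(D))}$. Both terms on the left-hand side are therefore nonnegative and nondecreasing in $t$.

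I would then take the supremum over $t \in [0,T]$ (bounding $2M_t$ by $2\sup_{r\leq T}\abs{M_r}$ and retaining the two nonnegative left-hand side terms) and pass to expectation, obtaining
\begin{align*}
  &\E\sup_{t\leq T}\norm{X_\lambda(t)}_H^2 + C\,\E\int_0^T \norm{X_\lambda(s)}_V^2\,ds
  + \E\norm[\big]{\beta_\lambda(X_\lambda)X_\lambda}_{L^1(0,T;L^1(D))}\\
  &\qquad \lesssim \E\norm{X_0}_H^2 + \E\int_0^T \norm{B(s)}_{\cL^2(U,H)}^2\,ds
  + \E\sup_{r\leq T}\abs{M_r}.
\end{align*}
Since $X_\lambda \in L^2(\Omega;C([0,T];H))$ and $B \in L^2(\Omega;L^2(0,T;\cL^2(U,H)))$, the hypotheses of Lemma~\ref{lm:DY} are satisfied with $F = X_\lambda$ and $G = B$ (in particular $M$ is a genuine martingale), so that for every $\varepsilon>0$
\begin{align*}
  \E\sup_{r\leq T}\abs{M_r} &= \E\bigl((X_\lambda B)\cdot W\bigr)_T^*\\
  &\leq \varepsilon\,\E\bigl((X_\lambda)_T^*\bigr)^2 + N(\varepsilon)\,\E\int_0^T \norm{B(s)}_{\cL^2(U,H)}^2\,ds.
\end{align*}
As $\bigl((X_\lambda)_T^*\bigr)^2 = \sup_{t\leq T}\norm{X_\lambda(t)}_H^2$ by continuity of the paths of $X_\lambda$, choosing $\varepsilon$ small enough lets one absorb the term $\varepsilon\,\E\sup_{t}\norm{X_\lambda(t)}_H^2$ into the left-hand side, which gives the assertion with a constant $N$ depending only on the coercivity constant $C$ of $A$.

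The only point requiring care is this final absorption: it is legitimate precisely because $\E\sup_{t}\norm{X_\lambda(t)}_H^2$ is finite a priori, thanks to the variational regularity $X_\lambda \in L^2(\Omega;C([0,T];H))$. A reader who prefers not to rely on this can instead localize along the stopping times $\tau_N := \inf\{t\in[0,T]: \norm{X_\lambda(t)}_H > N\}\wedge T$: running the argument with $T$ replaced by $\tau_N$ makes every quantity bounded and the absorption trivial, and one then lets $N\to\infty$, using $\tau_N\uparrow T$ and monotone convergence. Beyond this, there is no real obstacle: the whole estimate reduces to It\^o's formula for $\norm{X_\lambda}_H^2$ combined with the martingale inequality of Lemma~\ref{lm:DY}, the point being that, unlike in Lemma~\ref{lm:stp}, in expectation the stochastic term disappears (or is controlled with a small constant), so the coercivity of $A$ and the sign of $\beta_\lambda(X_\lambda)X_\lambda$ directly produce the three bounds in one stroke.
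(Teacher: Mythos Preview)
Your proof is correct and follows essentially the same route as the paper: apply It\^o's formula to $\norm{X_\lambda}_H^2$, use coercivity of $A$ and the sign of $\beta_\lambda(X_\lambda)X_\lambda$, take supremum and expectation, and control the stochastic term via Lemma~\ref{lm:DY} with $\varepsilon$ small. Your extra remarks on why the absorption is legitimate (finiteness of $\E\sup_t\norm{X_\lambda(t)}_H^2$, or alternatively localization by stopping times) are a welcome clarification but not a departure from the paper's argument.
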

\begin{proof}
  It\^o's formula yields
  \begin{align*}
  &\norm[\big]{X_\lambda(t)}_H^2 
  + 2\int_0^t \ip[\big]{AX_\lambda(s)}{X_\lambda(s)}\,ds
  + 2 \int_0^t \ip[\big]{\beta_\lambda(X_\lambda(s))}{X_\lambda(s)}\,ds\\
  &\hspace{3em} = \norm[\big]{X_0}_H^2
  + 2\int_0^t X_\lambda(s) B(s)\,dW(s) 
  + \frac12 \int_0^t \norm[\big]{B(s)}^2_{\cL^2(U,H)}\,ds,
  \end{align*}
  where $X_\lambda$ in the stochastic integral on the right-hand side
  has to be interpreted as taking values in $H^* \simeq H$. The
  coercivity of $A$ and the monotonicity of $\beta_\lambda$ readily
  imply, after taking supremum in time and expectation,
  \begin{align*}
  &\E\norm[\big]{X_\lambda}^2_{C([0,T];H)} 
  + 2C \E\norm[\big]{X_\lambda}^2_{L^2(0,T;V)}
  + \E\int_0^T \ip[\big]{\beta_\lambda(X_\lambda(s))}{X_\lambda(s)}\,ds\\
  &\hspace{5em} \lesssim
  \E\norm[\big]{X_0}_H^2 + \E\norm[\big]{B}^2_{L^2(0,T;\cL^2(U,H))}
  + \E\sup_{t\in[0,T]} \abs[\bigg]{\int_0^t X_\lambda(s) B(s)\,dW(s)},
  \end{align*}
  where, by Lemma \ref{lm:DY},
  \[
  \E\sup_{t\in[0,T]} \abs[\bigg]{\int_0^t X_\lambda(s) B(s)\,dW(s)}
  \leq \varepsilon \E \norm[\big]{X_\lambda}^2_{C([0,T];H)}
  + N(\varepsilon) \E\int_0^T \norm[\big]{B(s)}^2_{\cL^2(U,H)}\,ds
  \]
  for any $\varepsilon>0$, whence the result follows choosing
  $\varepsilon$ small enough.
\end{proof}

We now establish weak compactness properties for the sequence
$(\beta_\lambda(X_\lambda))$.
\begin{lemma}
\label{lm:L1}
  The sequence $(\beta_\lambda(X_\lambda))$ is relatively
  weakly compact in $L^1(\Omega \times (0,T) \times D)$. Moreover,
  there exists a set $\Omega'' \subset \Omega$, with $\P(\Omega'')=1$,
  such that $(\beta_\lambda(X_\lambda(\omega,\cdot))$
  is weakly relatively compact in $L^1((0,T) \times D)$ for all
  $\omega \in \Omega''$.
\end{lemma}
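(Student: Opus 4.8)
\emph{Proof strategy.} The plan is to derive both assertions from the Dunford--Pettis theorem, so the real task is to establish the uniform integrability of the family $(\beta_\lambda(X_\lambda))_\lambda$ --- over $\Omega\times(0,T)\times D$, and, for $\P$-a.e.\ $\omega$, over $(0,T)\times D$ --- the underlying measure spaces being finite since $D$ is bounded. The test function for the de la Vall\'ee-Poussin criterion will be the conjugate potential $j^*$: since $\dom(\beta)=\erre$, property~(f) of {\S}\ref{ssec:cvx} makes $j^*$ superlinear at infinity, and, $j$ being even, $j^*$ is even as well, hence (being convex with $j^*(0)=0$) nondecreasing on $\erre_+$. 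The first step is the pointwise inequality
\[
0 \le j^*\bigl(\beta_\lambda(X_\lambda)\bigr) \le X_\lambda\,\beta_\lambda(X_\lambda),
\]
valid a.e.\ in $\Omega\times(0,T)\times D$ for every $\lambda>0$: by property~(b), $\beta_\lambda(X_\lambda)\in\beta(J_\lambda X_\lambda)$ with $J_\lambda:=(I+\lambda\beta)^{-1}$, so equality holds in the Young inequality (property~(e)), i.e.\ $j(J_\lambda X_\lambda)+j^*(\beta_\lambda(X_\lambda))=(J_\lambda X_\lambda)\beta_\lambda(X_\lambda)$; since $j\ge0$ and $J_\lambda X_\lambda=X_\lambda-\lambda\beta_\lambda(X_\lambda)$, the right-hand side equals $X_\lambda\beta_\lambda(X_\lambda)-\lambda\beta_\lambda(X_\lambda)^2\le X_\lambda\beta_\lambda(X_\lambda)$, while $j^*\ge j^*(0)=0$ and $X_\lambda\beta_\lambda(X_\lambda)\ge0$ because $\beta_\lambda$ is monotone with $\beta_\lambda(0)=0$.

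It then suffices to bound $X_\lambda\beta_\lambda(X_\lambda)$ in $L^1$, uniformly in $\lambda$. In expectation this is exactly Lemma~\ref{lm:aspetta}, which yields $\sup_\lambda\E\int_0^T\!\!\int_D X_\lambda\beta_\lambda(X_\lambda)<\infty$. The pathwise bound requires an additional argument: for $\omega$ in a full-measure set $\Omega''$, contained in the set $\Omega'$ of Lemma~\ref{lm:stp}, one has $B\cdot W(\omega)\in C([0,T];V_0)\embed C([0,T];L^\infty(D))$ (as shown in the proof of that lemma), so that $K(\omega):=\norm{B\cdot W(\omega)}_{L^\infty((0,T)\times D)}<\infty$. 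Using the contraction property $|J_\lambda r|\le|r|$ of the resolvent and the boundedness of $\beta$ on $[-2K(\omega),2K(\omega)]$, one checks the pointwise inequality
\[
\beta_\lambda(X_\lambda)\bigl(X_\lambda-B\cdot W\bigr) \ \ge\ \tfrac12\,X_\lambda\beta_\lambda(X_\lambda) - C(\omega),
\]
distinguishing the regions $\{|X_\lambda|>2K(\omega)\}$ (where $|\beta_\lambda(X_\lambda)(B\cdot W)|\le\frac12|X_\lambda\beta_\lambda(X_\lambda)|$) and $\{|X_\lambda|\le2K(\omega)\}$ (where $\beta_\lambda(X_\lambda)$ is bounded uniformly in $\lambda$), with $C(\omega)<\infty$ depending only on $K(\omega)$. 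Integrating over $(0,T)\times D$ and inserting the upper bound on $\int_0^T\!\!\int_D\beta_\lambda(X_\lambda)(X_\lambda-B\cdot W)$ supplied by \eqref{eq:pippo} (whose right-hand side is finite on $\Omega''$) gives $\sup_\lambda\int_0^T\!\!\int_D X_\lambda\beta_\lambda(X_\lambda)\,dx\,dt<\infty$ for every $\omega\in\Omega''$; note that the integrand $\beta_\lambda(X_\lambda)(X_\lambda-B\cdot W)$ is bounded below, uniformly in $\lambda$, by $-j(B\cdot W)\ge -j(K(\omega))$, so all these integrals are genuine real numbers and the rearrangements are legitimate.

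Combining these bounds with the first step gives $\sup_\lambda\E\int_0^T\!\!\int_D j^*(\beta_\lambda(X_\lambda))<\infty$ and, for every $\omega\in\Omega''$, $\sup_\lambda\int_0^T\!\!\int_D j^*(\beta_\lambda(X_\lambda(\omega)))<\infty$. Since $j^*$ is superlinear at infinity and the relevant measure spaces are finite, the de la Vall\'ee-Poussin criterion gives the uniform integrability (in particular the $L^1$-boundedness) of $(\beta_\lambda(X_\lambda))_\lambda$ and, for $\omega\in\Omega''$, of $(\beta_\lambda(X_\lambda(\omega)))_\lambda$; the Dunford--Pettis theorem then yields the two asserted relative weak compactness properties. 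I expect the pathwise $L^1$-bound on $X_\lambda\beta_\lambda(X_\lambda)$ to be the only non-routine point of the argument: in contrast with its averaged counterpart it is not delivered directly by the a priori estimates, and producing it forces one to exploit the $L^\infty$-regularity in space of the stochastic convolution $B\cdot W$ --- itself a consequence of the standing assumption that $B$ takes values in $\cL^2(U,V_0)$ with $V_0\embed L^\infty(D)$ --- through the elementary case-splitting above.
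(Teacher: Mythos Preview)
Your argument is correct and follows the same overall scheme as the paper: both use the pointwise bound $j^*(\beta_\lambda(X_\lambda))\le X_\lambda\beta_\lambda(X_\lambda)$ (derived from $\beta_\lambda\in\beta\circ J_\lambda$ and property~(e)), invoke Lemma~\ref{lm:aspetta} for the bound in expectation, and conclude via de la Vall\'ee-Poussin and Dunford--Pettis.

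The genuine difference lies in the pathwise step. The paper obtains the $\omega$-wise control on $\int_0^T\!\!\int_D X_\lambda\beta_\lambda(X_\lambda)$ by rewriting \eqref{eq:pippo} as an estimate for $\int\langle\beta_\lambda(X_\lambda),X_\lambda\rangle$ with an error term $\int\langle\beta_\lambda(X_\lambda),B\cdot W\rangle$, which is then handled by a Young inequality $\beta_\lambda(X_\lambda)(B\cdot W)\le\frac12 j^*(\beta_\lambda(X_\lambda))+j(2B\cdot W)$ and absorption of the $j^*$ term into the left-hand side; the residual $\int j(2B\cdot W)$ is controlled exactly as in Lemma~\ref{lm:stp}. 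Your route instead exploits the $L^\infty$ bound on $B\cdot W$ directly via the elementary case-split $\{|X_\lambda|\gtrless 2K(\omega)\}$, using local boundedness of $\beta$ (valid since $\dom(\beta)=\erre$) on the small set. Both rely, at bottom, on $V_0\embed L^\infty(D)$; the paper's absorption trick is slightly slicker and stays within the convex-analytic toolkit, while yours is more hands-on but entirely self-contained. Your explicit verification that $\beta_\lambda(X_\lambda)(X_\lambda-B\cdot W)\ge -j(K(\omega))$, ensuring the pathwise integrals are genuine real numbers before rearranging, is a point of rigor the paper leaves implicit.
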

\begin{proof}
  Recalling that, for any $y$, $r \in \erre$, $j(y)+j^*(r)=ry$ if and
  only if $r \in \partial j(y)=\beta(y)$, one has
  \begin{equation}
  \label{eq:pluto}
  j\bigl((I+\lambda\beta)^{-1}x\bigr) + j^*\bigl(\beta_\lambda(x)\bigr)
  = \beta_\lambda(x) (I+\lambda\beta)^{-1}x \leq \beta_\lambda(x) x
  \qquad \forall x \in \erre.
  \end{equation}
  In fact, since $\beta_\lambda \in \beta \circ
  (I+\lambda\beta)^{-1}$, it follows from $\beta=\partial j$ that
  $\beta_\lambda(x) \in \partial
  j\bigl((I+\lambda\beta)^{-1}x\bigr)$. Moreover,
  $\beta\bigl((I+\lambda\beta)^{-1}x\bigr) (I+\lambda\beta)^{-1}x \geq
  0$ by monotonicity of $\beta$, hence the inequality in
  \eqref{eq:pluto} follows since $(I+\lambda\beta)^{-1}$ is a
  contraction. The previous lemma thus implies, thanks to the symmetry
  of $j^*$, that there exists a constant $N$, independent of
  $\lambda$, such that, setting
  \[
  \bar{N}(X_0,B) := N\Bigl(
  \norm[\big]{X_0}^2_{L^2(\Omega;H)} 
  + \norm[\big]{B}^2_{L^2(\Omega;L^2(0,T;\cL^2(U,H)))} \Bigr),
  \]
  one has
  \[
  \E\int_0^T\!\!\int_D j^*\bigl(\abs{\beta_\lambda(X_\lambda)}\bigr)
  \leq \E\int_0^T\!\!\int_D \beta_\lambda(X_\lambda)X_\lambda < \bar{N}(X_0,B).
  \]
  Since $j^*$ is superlinear at infinity, the sequence
  $(\beta_\lambda(X_\lambda))$ is uniformly integrable on
  $\Omega \times (0,T) \times D$ by the de la Vall\'ee-Poussin
  criterion, hence weakly relatively compact in
  $L^1(\Omega \times (0,T) \times D)$ by a well-known theorem of
  Dunford and Pettis. The first assertion is thus proved.

  By \eqref{eq:pippo}, since $Y_\lambda=X_\lambda - B \cdot W$, it
  follows that
  \begin{align*}
  \int_0^t \ip[\big]{\beta_\lambda(X_\lambda(s))}{X_\lambda(s)}\,ds &\lesssim
  \norm{X_0}_H^2 + \norm{B\cdot W(t)}_H^2
  + \int_0^t \norm{B\cdot W(s)}_V^2\,ds\\
  &\quad + \int_0^t \ip[\big]{\beta_\lambda(X_\lambda(s))}{B\cdot W(s)}\,ds,
  \end{align*}
  where, by Young's inequality and convexity (recalling that $j^*(0)=0$),
  \[
  \int_0^t \ip[\big]{\beta_\lambda(X_\lambda(s))}{B\cdot W(s)}\,ds \leq
  \frac12 \int_0^t\!\!\int_D j^*\bigl( \beta_\lambda(X_\lambda)\bigr)
  + \int_0^t\!\!\int_D j(2B \cdot W).
  \]
  Rearranging terms and proceeding as in the (end of the) proof of
  Lemma \ref{lm:stp}, we infer that there exists a set
  $\Omega''\subset\Omega$, with $\P(\Omega'')=1$, and a function
  $M:\Omega'' \to \erre$ such that
  \begin{equation}  \label{eq:plutone} 
    \int_0^T \ip[\big]{\beta_\lambda(X_\lambda(\omega,s))}%
    {X_\lambda(\omega,s)}\,ds < M(\omega) \qquad \forall \omega \in
    \Omega''.
  \end{equation}
  The symmetry of $j^*$ and \eqref{eq:pluto} yield, as before, that,
  for any $\omega \in \Omega''$,
  $(\beta_\lambda(X_\lambda(\omega,\cdot)))$ is weakly
  relatively compact in $L^1((0,T)\times D)$.
\end{proof}

In order to pass to the limit as $\lambda \to 0$, we are going to use
Simon's compactness criterion, i.e. Lemma \ref{lm:Simon}, and Br\'ezis' Lemma
\ref{lm:Brezis}.

\begin{prop}
\label{prop:cvg}
  There exists $\Omega' \subseteq \Omega$, with $\P(\Omega')=1$, such
  that, for any $\omega \in \Omega'$, there exists a subsequence
  $\lambda'=\lambda'(\omega)$ of $\lambda$ such that, as
  $\lambda' \to 0$,
  \begin{align*}
    &X_{\lambda'}(\omega,\cdot) \xrightharpoonup{\;*\;} X(\omega,\cdot) 
    & &\text{in } L^\infty(0,T;H),\\
    &X_{\lambda'}(\omega,\cdot) \xrightharpoonup{\;\phantom{*}\;} 
    X(\omega,\cdot) & &\text{in } L^2(0,T;V),\\
    &X_{\lambda'}(\omega,\cdot) \xrightarrow{\;\phantom{*}\;} 
    X(\omega,\cdot) & &\text{in } L^2(0,T;H),\\
    &\beta_{\lambda'}(X_{\lambda'}(\omega,\cdot)) \xrightharpoonup{\;\phantom{*}\;} 
    \xi(\omega,\cdot) & &\text{in } L^1((0,T) \times D).
  \end{align*}
\end{prop}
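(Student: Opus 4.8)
The plan is to prove the statement as a pathwise compactness result: fix a single full-measure event on which all the pathwise a priori estimates of the previous lemmata are available, and then, for each $\omega$ in it, produce — by finitely many successive extractions — one subsequence realizing all four convergences. Weak-$*$ compactness in $L^\infty(0,T;H)$ and weak compactness in the reflexive space $L^2(0,T;V)$ yield the first two convergences; the pathwise weak relative $L^1$-compactness of $(\beta_\lambda(X_\lambda(\omega)))_\lambda$ established in Lemma~\ref{lm:L1} yields the fourth; and Simon's criterion (Lemma~\ref{lm:Simon}), applied to $Y_\lambda:=X_\lambda-B\cdot W$ as in Remark~\ref{rmk:rtr}, upgrades the weak $L^2(0,T;V)$ convergence to strong convergence in $L^2(0,T;H)$.

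Concretely, let $\Omega'$ be the intersection of the full-measure sets provided by Lemmata~\ref{lm:stp} and~\ref{lm:L1} with the event $\{\,B\cdot W\in C([0,T];V_0)\,\}$, and fix $\omega\in\Omega'$, writing $M:=M(\omega)$. By Lemma~\ref{lm:stp} the family $(X_\lambda(\omega))_\lambda$ is bounded in $C([0,T];H)\cap L^2(0,T;V)$, hence, along a subsequence, $X_{\lambda'}(\omega)\xrightharpoonup{\;*\;}X(\omega)$ in $L^\infty(0,T;H)$ and $X_{\lambda'}(\omega)\wto X(\omega)$ in $L^2(0,T;V)$, the two limits coinciding because both agree with the weak limit in $L^2(0,T;H)$ (both $L^\infty(0,T;H)$ and $L^2(0,T;V)$ embed continuously in $L^2(0,T;H)$, $D$ having finite measure). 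Refining the subsequence, the weak relative $L^1$-compactness of $(\beta_\lambda(X_\lambda(\omega)))_\lambda$ asserted in Lemma~\ref{lm:L1} gives $\beta_{\lambda'}(X_{\lambda'}(\omega))\wto\xi(\omega)$ in $L^1((0,T)\times D)$.

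For the strong convergence, rewrite \eqref{eq:reg} as in Remark~\ref{rmk:rtr} and read it in $V_0^*$: since $V_0\embed V$ and $V_0\embed L^\infty(D)$, one has $V^*\embed V_0^*$ and $L^1(D)\embed V_0^*$, so $t\mapsto Y_\lambda(\omega,t)$ lies in $W^{1,1}(0,T;V_0^*)$ with
\[
Y_\lambda'(\omega)=-AX_\lambda(\omega)-\beta_\lambda(X_\lambda(\omega)),
\]
and, using $A\in\cL(V,V^*)$ together with $\norm{\beta_\lambda(X_\lambda(\omega,s))}_{V_0^*}\lesssim\norm{\beta_\lambda(X_\lambda(\omega,s))}_{L^1(D)}$,
\[
\norm[\big]{Y_\lambda'(\omega)}_{L^1(0,T;V_0^*)}\lesssim
\norm[\big]{X_\lambda(\omega)}_{L^2(0,T;V)}+\norm[\big]{\beta_\lambda(X_\lambda(\omega))}_{L^1((0,T)\times D)},
\]
which is bounded uniformly in $\lambda$ by Lemma~\ref{lm:stp}, \eqref{eq:plutone}, \eqref{eq:pluto} and the superlinearity of $j^*$, exactly as in the proof of Lemma~\ref{lm:L1}. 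Since $B\cdot W(\omega)\in C([0,T];V_0)\embed L^2(0,T;V)$, the family $(Y_\lambda(\omega))_\lambda$ is therefore bounded in $L^2(0,T;V)\cap W^{1,1}(0,T;V_0^*)$; as $V\embed H$ compactly by Assumption~A and $H\embed V_0^*$ continuously, Lemma~\ref{lm:Simon} gives relative compactness of $(Y_\lambda(\omega))_\lambda$, hence of $(X_\lambda(\omega))_\lambda=(Y_\lambda(\omega)+B\cdot W(\omega))_\lambda$, in $L^2(0,T;H)$. Extracting once more, $X_{\lambda'}(\omega)\to X(\omega)$ in $L^2(0,T;H)$, the limit being identified with the one above because a strong limit is a fortiori a weak one. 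Since $\omega\in\Omega'$ was arbitrary and $\P(\Omega')=1$, this proves all four convergences.

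The one genuinely delicate point is the choice of pivot space in Simon's criterion: $\beta_\lambda(X_\lambda)$ is a priori controlled only in $L^1(D)$, while $AX_\lambda$ lives in $V^*$, so neither $V^*$ nor $L^1(D)$ works in isolation; the dual $V_0^*$ of the auxiliary space $V_0$, into which both inject continuously while preserving the compact embedding $V\embed H$, is what makes the argument go through. The remaining steps — performing all extractions along a single subsequence and matching up the weak, weak-$*$ and strong limits — are routine bookkeeping.
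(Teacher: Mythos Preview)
Your proof is correct and follows essentially the same route as the paper: the first two convergences come from Lemma~\ref{lm:stp} via weak-$*$/weak compactness, the fourth from Lemma~\ref{lm:L1}, and the strong $L^2(0,T;H)$ convergence from Simon's criterion applied to $Y_\lambda=X_\lambda-B\cdot W$ in the triple $V\embed H\embed V_0^*$, with the $W^{1,1}(0,T;V_0^*)$ bound on $Y_\lambda$ obtained exactly as you describe. Your additional remarks on why the limits coincide and on the role of $V_0^*$ as pivot space are welcome clarifications but do not depart from the paper's argument.
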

\begin{proof}
  The first two convergence statements follow by Lemma \ref{lm:stp},
  and the fourth one follows by Lemma \ref{lm:L1}. Let us show that
  the third convergence statement holds. In the following we omit
  the indication of $\omega$, as no confusion can arise. Setting
  $Y_\lambda=X_\lambda - B \cdot W$, \eqref{eq:reg} can equivalently
  be written as the deterministic equation (with random coefficients)
  on $V^*$
  \[
  Y'_\lambda + AX_\lambda + \beta_\lambda(X_\lambda) = 0,
  \]
  where
  \begin{gather*}
    \norm[\big]{AX_\lambda}_{L^1(0,T;V_0^*)} \lesssim
    \norm[\big]{AX_\lambda}_{L^1(0,T;V^*)} \lesssim
    \norm[\big]{X_\lambda}_{L^1(0,T;V)},\\
    \norm[\big]{\beta_\lambda(X_\lambda)}_{L^1(0,T;V_0^*)} \lesssim
    \norm[\big]{\beta_\lambda(X_\lambda)}_{L^1(0,T;V^*)} \lesssim
    \norm[\big]{\beta_\lambda(X_\lambda)}_{L^1(0,T;L^1(D))},
  \end{gather*}
  hence, again by Lemmata \ref{lm:stp} and \ref{lm:L1},
  $\norm{Y'_\lambda}_{L^1(0,T;V_0^*)}$ is bounded uniformly over
  $\lambda$. Moreover, since $B \cdot W \in L^2(\Omega;C([0,T];V_0))$
  and
  \[
  \norm[\big]{Y_\lambda}_{L^2(0,T;V)} \leq 
  \norm[\big]{X_\lambda}_{L^2(0,T;V)} + 
  \norm[\big]{B \cdot W}_{L^2(0,T;V)},
  \]
  we conclude that $(Y_\lambda)$ is bounded in $L^2(0,T;V)$. Simon's
  compactness criterion then implies that $Y_\lambda$, hence also
  $X_\lambda$, is relatively compact in $L^2(0,T;H)$. Since
  $X_{\lambda'} \wto X$ in $L^2(0,T;V)$, it follows that
  \[
  X_{\lambda'}(\omega,\cdot) \xrightarrow{\;\phantom{*}\;} X(\omega,\cdot) 
  \qquad \text{in } L^2(0,T;H),
  \]
  thus completing the proof.
\end{proof}

We are now going to show that the couple $(X,\xi)$ just constructed is
indeed the unique solution to the equation with ``smoothed'' noise
\eqref{eq:V0}.
\begin{proof}[Proof of Proposition \ref{prop:V0}]
  In spite of the above preparations, the argument is quite
  long, so we subdivide it into several steps.
  \smallskip\par\noindent
\textsc{Step 1.} In the notation of Proposition \ref{prop:cvg}, let
  $\omega \in \Omega'$ be arbitrary but fixed. Note that
  $X_{\lambda'} \to X$ in $L^2(0,T;H)$ implies that, passing to a
  further subsequence of $\lambda'$, denoted with the same symbol for
  simplicity, $X_{\lambda'}(t) \to X(t)$ in $H$ for almost all
  $t \in [0,T]$. Moreover, $X_{\lambda'} \wto X$ in $L^2(0,T;V)$
  implies that 
  \[
  \int_0^t AX_\lambda(s)\,ds \xrightharpoonup{\;\;\;} \int_0^t AX(s)\,ds
  \qquad \text{in } V^*
  \]
  for all $t \in [0,T]$. In fact, taking $\phi_0 \in V$ and
  $\phi:=s \mapsto 1_{[0,t]}(s) \phi_0 \in L^2(0,t;V)$, one obviously has
  $A^*\phi \in L^2(0,t;V^*)$ and
  \begin{align*}
    \int_0^t \ip{AX_\lambda(s)}{\phi_0}\,ds &=
    \int_0^T \ip{AX_\lambda(s)}{\phi(s)}\,ds =
    \int_0^T \ip{X_\lambda(s)}{A^*\phi(s)}\,ds\\
    &\quad \longrightarrow \int_0^T \ip{X(s)}{A^*\phi(s)}\,ds
    = \int_0^t \ip{AX(s)}{\phi_0}\,ds.
  \end{align*}
  Similarly, $\beta_{\lambda'}(X_{\lambda'}) \wto \xi$ in
  $L^1((0,T) \times D)$ implies
  \[
  \int_0^t \beta_{\lambda'}(X_{\lambda'}(s))\,ds \xrightharpoonup{\;\;\;}
  \int_0^t \xi(s)\,ds \qquad \text{in } L^1(D)
  \]
  for all $t \in [0,T]$. In particular, passing to the limit as
  $\lambda' \to 0$ in the regularized equation
  \eqref{eq:reg} yields
  \[
  X(t) + \int_0^t AX(s)\,ds + \int_0^t \xi(s)\,ds = X_0 + B \cdot W(t)
  \qquad \text{in $V_0^*$ for a.a. $t\in[0,T]$.}
  \]
  Since $AX \in L^2(0,T;V^*) \embed L^1(0,T;V_0^*)$ and $\xi \in
  L^1(0,T;L^1(D)) \embed L^1(0,T;V_0^*)$, recalling that $B \cdot W
  \in C([0,T];V_0)$, we infer that $X \in C([0,T];V_0^*)$, hence the
  previous identity is true for all $t \in [0,T]$. Moreover, it
  follows from $X \in L^\infty(0,T;H)$ that $X \in C_w([0,T];H)$,
  thanks Lemma~\ref{lm:Strauss}.  Note also that all terms expect the second
  one on the left-hand side take values in $L^1(D)$, and all terms
  except the third one on the left-hand side take values in $V^*$,
  hence the above identity holds true also in $L^1(D) \cap V^*$.

  Let us now show that $\xi \in \beta(X)$ a.e. in $(0,T) \times D$:
  $X_{\lambda'} \to X$ in $L^2(0,T;H)$ implies that, passing to a
  subsequence of $\lambda'$, still denoted by the same symbol,
  $X_{\lambda'} \to X$ a.e. in $(0,T) \times D$, hence also
  $(I+\lambda' \beta)^{-1} X_{\lambda'} \to X$ a.e. in
  $(0,T) \times D$. Since
  $\beta_{\lambda'}(X_{\lambda'}) \in \beta((I+\lambda'\beta)^{-1}
  X_{\lambda'})$ a.e. in $(0,T) \times D$ and
  $\beta_{\lambda'}(X_{\lambda'}) (I+\lambda'\beta)^{-1} X_{\lambda'}$ is
  bounded in $L^1((0,T) \times D)$ by \eqref{eq:plutone}, Br\'ezis'
  Lemma \ref{lm:Brezis} implies the claim. These relations and the
  weak convergence $\beta_{\lambda'}(X_{\lambda'}) \wto \xi$ in
  $L^1((0,T) \times D)$ also imply, by the weak lower semicontinuity
  of convex integrals, that
  \begin{align*}
  \int_0^T\!\!\int_D \bigl( j(X)+j^*(\xi) \bigr) &\leq 
  \liminf_{\lambda' \to 0} \int_0^T\!\!\int_D
  \bigl( j((I+\lambda' A)^{-1} X_{\lambda'})
   + j^*(\beta_{\lambda'}(X_{\lambda'})) \bigr)\\
  &= \liminf_{\lambda' \to 0} \int_0^T\!\!\int_D
  \beta_{\lambda'}(X_{\lambda'}) (I+\lambda' A)^{-1} X_{\lambda'} \leq N,
  \end{align*}
  where $N$ is a constant that depends on $\omega$.
  \smallskip\par\noindent
  \textsc{Step 2.} Still keeping $\omega$ fixed as in the previous
  step, we are going to show that the limits $X$ and $\xi$ constructed
  above are unique. Suppose there
  exist $(X_i,\xi_i)$, $\xi_i \in \beta(X_i)$ a.e. in
  $(0,T) \times D$, $i=1,2$, such that
  \[
  X_i(t) + \int_0^t AX_i(s)\,ds + \int_0^t \xi_i(s)\,ds = X_0 + B
  \cdot W(t)
  \]
  in $L^1(D) \cap V^*$ for all $t\in [0,T]$. Setting $X=X_1-X_2$ and
  $\xi=\xi_1-\xi_2$, it is enough to show that
  \begin{equation}  \label{eq:secca}
  X(t) + \int_0^t AX(s)\,ds + \int_0^t \xi(s)\,ds = 0
  \end{equation}
  in $L^1(D) \cap V^*$ for all $t\in [0,T]$ implies $X=0$ and
  $\xi=0$. By the hypotheses on $A$, there exists $m\in\mathbb{N}$
  such that $(I+\delta A)^{-m}$ maps $L^1(D)$ in
  $L^\infty(D)$. Therefore, setting
  \[
  X^\delta := (I+\delta A)^{-m}X, \qquad
  \xi^\delta := (I+\delta A)^{-m}\xi,
  \]
  one has
  \[
  X^\delta(t) + \int_0^t AX^\delta(s)\,ds + \int_0^t \xi^\delta(s)\,ds = 0
  \]
  for all $t \in [0,T]$, for which It\^o's formula and monotonicity of
  $A$ yield
  \[
  \frac12 \norm[\big]{X^\delta(t)}^2_H 
  + \int_0^t\!\!\int_D \xi^\delta(s,x)X^\delta(s,x)\,dx\,ds \leq 0.
  \]
  We can now take the limit as $\delta \to 0$. Since
  $(I+\delta A)^{-m}$ converges, in the strong operator topology, to
  the identity in $\cL(H)$, one has
  $\norm{X^\delta(t)}_H \to \norm{X(t)}_H$ for all $t \in [0,T]$.
  Passing to a subsequence of $\delta$, still denoted by the same
  symbol, we also have $X^\delta \to X$ and $\xi^\delta \to \xi$
  a.e. in $(0,T) \times D$, hence $X^\delta \xi^\delta \to X\xi$
  a.e. in $(0,T) \times D$. Let us show that $(X^\delta\xi^\delta)$ is
  uniformly integrable: by the symmetry of $j$ and $j^*$, and the
  abstract Jensen inequality of Lemma \ref{lm:J}, we have
  \[
  \abs{X_\delta \xi_\delta} \leq j(X_\delta) + j^*(\xi_\delta)
  \leq 
  (I+\delta A)^{-m} \bigl( j(X)+j^*(\xi) \bigr),
  \]
  where the term on the right-hand side converges to $j(X)+j^*(\xi)$
  in $L^1((0,T) \times D)$ as $\delta \to 0$, hence
  $(X^\delta\xi^\delta)$ is indeed uniformly integrable on
  $(0,T) \times D$. It follows by Vitali's convergence theorem that,
  for any $t\in [0,T]$,
  \[
  \int_0^t\!\!\int_D X^\delta \xi^\delta \to
  \int_0^t\!\!\int_D X\xi,
  \]
  hence also
  \[
  \frac12 \norm[\big]{X(t)}^2_H 
  + \int_0^t\!\!\int_D X(s,x)\xi(s,x)\,dx\,ds \leq 0.
  \]
  The monotonicity of $\beta$ immediately implies that $X(t)=0$ for
  all $t\in [0,T]$. Substituing in \eqref{eq:secca}, we are left with
  $\int_0^t\xi(s)\,ds=0$ in $L^1(D)$ for all $t\in[0,T]$, so that also
  $\xi=0$, and uniqueness is proved.
  \smallskip\par\noindent
  \textsc{Step 3.} The solution $(X,\xi)$ does \emph{not} have, a
  priori, any measurability in $\omega$, because of the way it has
  been constructed. We are going to show that in fact $X$ and $\xi$
  are predictable processes. The reasoning for $X$ is simple: with
  $\omega$ fixed, we have proved that from any subsequence of
  $\lambda$ one can extract a further subsequence $\lambda'$,
  depending on $\omega$, such that the convergences of Proposition
  \ref{prop:cvg} take place, and the limit $(X,\xi)$ is unique. This
  implies, by a well-known criterion of classical analysis, that the
  same convergences hold along the original sequence $\lambda$, which
  does \emph{not} depend on $\omega$. The convergence of
  $X_\lambda(\omega,\cdot)$ to $X(\omega,\cdot)$ in $L^2(0,T;H)$
  implies that \luca{$X:\Omega\to L^2(0,T;H)$ is measurable and}
  $X_\lambda(\omega,t)$ converges to $X(\omega,t)$ in $H$ in $\P
  \otimes dt$-measure, hence $X_{\bar\lambda}(\omega,t) \to
  X(\omega,t)$ in $H$ $\P \otimes dt$-a.e. along a subsequence
  $\bar\lambda$ of $\lambda$.  Since $X_\lambda$ is predictable, being
  adapted with continuous trajectories in $H$, we infer that $X$ is
  predictable.  Unfortunately a similar reasoning does \emph{not} work
  for $\xi$, because
  $\xi_\lambda(\omega):=\beta_\lambda(X_\lambda(\omega))$ converges
  only weakly in $L^1((0,T)\times D)$ for
  $\P$-a.a.~$\omega\in\Omega$.\footnote{%
    One may indeed deduce, using Mazur's lemma, that there exists, for
    each $\omega$ in a set of probability one, a sequence
    $(\tilde{\xi}_{\mu(\omega)}(\omega))_{\mu(\omega)}$ in the convex
    envelope of $(\xi_\lambda(\omega))_\lambda$ that converges to
    $\xi(\omega)$. However, the map $\omega \mapsto
    \tilde{\xi}_{\mu(\omega)}(\omega)$ needs not be measurable, hence
    we cannot infer measurability of its limit $\xi$.} %
  We shall prove instead that a subsequence of
  $\xi_\lambda:=\beta_\lambda(X_\lambda)$ converges weakly to $\xi$ in
  $L^1(\Omega \times (0,T) \times D)$. In fact, let $g \in
  L^\infty((0,T) \times D)$ be arbitrary but fixed. Then, setting
  \[
  F_\lambda(\omega) := \int_0^T\!\!\int_D \xi_\lambda(\omega,s,x)g(s,x)\,dx\,ds,
  \qquad
  F(\omega) := \int_0^T\!\!\int_D \xi(\omega,s,x)g(s,x)\,dx\,ds,
  \]
  we have $F_\lambda \to F$ in probability, and we claim that
  $F_\lambda \to F$ weakly in $L^1(\Omega)$. Let $h \in L^\infty(\Omega)$ be
  arbitrary but fixed, and introduce the even convex function
  \[
  j_0 := j^*(\cdot/M), \qquad M :=
  \frac{1}{\bigl(\norm{g}_{L^\infty((0,T)\times D)} \vee 1 \bigr)%
           \bigl(\norm{h}_{L^\infty(\Omega)} \vee 1 \bigr)}.
  \]
  Then, by Jensen's inequality,
  \begin{align*}
    \E j_0(F_\lambda h) &= \E j_0\biggl( \int_0^T\!\!\int_D \xi_\lambda(\omega,s,x)%
    g(s,x)h(\omega)\,dx\,ds \biggr)\\
    &\lesssim_{T,\abs{D}} \E\int_0^T\!\!\int_D j_0\bigl( \xi_\lambda(\omega,s,x)%
    g(s,x)h(\omega) \bigr)\,dx\,ds\\
    &\leq \E\int_0^T\!\!\int_D j^*\bigl( \xi_\lambda(\omega,s,x) \bigr)\,dx\,ds,
  \end{align*}
  where the last term is bounded by a constant independent of
  $\lambda$, as proved in Lemma \ref{lm:L1}. Since $j_0$ inherits the
  superlinearity at infinity of $j^*$, the criterion of de la Vall\'ee
  Poussin implies that $F_\lambda h$ is uniformly integrable, hence,
  since $F_\lambda h \to Fh$ in probability, that $F_\lambda h \to Fh$
  strongly in $L^1(\Omega)$ by Vitali's theorem. As $h$ was arbitrary,
  this implies that $F_\lambda \to F$ weakly in $L^1(\Omega)$, thus
  also that $\xi_\lambda \to \xi$ weakly in
  $L^1(\Omega \times (0,T) \times D)$ by arbitrariness of $g$. By the
  canonical identification of $L^1(\Omega \times (0,T) \times D)$ with
  $L^1(\Omega \times (0,T);L^1(D))$ and Mazur's lemma (see, e.g.,
  \cite[7), p.~360]{Bbk:EVT}), there exists a sequence
  $(\zeta_n)_{n\in\mathbb{N}}$ of convex combinations of
  $(\xi_\lambda)$ that converges strongly to $\xi$ in $L^1(D)$ in
  $\P \otimes dt$-measure, hence $\P \otimes dt$-a.e.  passing to a
  subsequence of $n$. Since $\xi_\lambda$, hence $\zeta_n$, are
  predictable for all $\lambda$ and $n$, respectively, it follows that
  $\xi$ is a predictable $L^1(D)$-valued process \luca{and $\xi:\Omega\to L^1((0,T)\times D))$
  is measurable.}
  \luca{Moreover, since $X_\lambda(\omega,\cdot)\to X(\omega,\cdot)$ in $L^2(0,T; H)$
  for $\P$-a.a. $\omega$ and $(X_\lambda)_\lambda$ is bounded in $L^2(\Omega; L^2(0,T;V))$,
  it follows that $X_\lambda\wto X$ in $L^2(\Omega; L^2(0,T; V))$. Therefore, an entirely analogous argument
  based on Mazur's lemma yields that $X:\Omega\to L^2(0,T;V)$ is measurable.}
  \smallskip\par\noindent
  \textsc{Step 4.} As last step, we are going to show that $X$ and
  $\xi$ satisfy also estimates in expectation. In particular, the weak
  and weak* lower semicontinuity of the norm ensures that, for $\P$-almost
  all $\omega \in \Omega$,
  \begin{align*}
  \norm[\big]{X(\omega,\cdot)}_{L^2(0,T;V)} &\leq \liminf_{\lambda \to 0}
  \norm[\big]{X_{\lambda}(\omega,\cdot)}_{L^2(0,T;V)},\\
  \norm[\big]{X(\omega,\cdot)}_{L^\infty(0,T;H)} &\leq 
  \liminf_{\lambda \to 0} \norm[\big]{X_{\lambda}(\omega,\cdot)}_{L^\infty(0,T;H)},\\
  \norm[\big]{\xi(\omega,\cdot)}_{L^1(Q)} &\leq \liminf_{\lambda \to 0}
  \norm[\big]{\beta_{\lambda}(X_{\lambda}(\omega,\cdot))}_{L^1(Q)}.
  \end{align*}
  Taking expectations and recalling Lemmata \ref{lm:aspetta} and
  \ref{lm:L1}, it follows by Fatou's lemma that, for a constant $N$,
  \begin{align*}
  \E \norm[\big]{X}_{L^2(0,T;V)}^2 &\leq 
  \E \Bigl( \liminf_{\lambda \to 0} \norm[\big]{X_{\lambda}}^2_{L^2(0,T;V)}
     \Bigr) \leq \liminf_{\lambda \to 0}
  \E \norm[\big]{X_{\lambda}}^2_{L^2(0,T;V)} < N,\\
  \E \norm[\big]{X}_{L^\infty(0,T;H)}^2 &\leq 
  \E \Bigl( \liminf_{\lambda \to 0} \norm[\big]{X_{\lambda}}^2_{L^\infty(0,T;H)}
     \Bigr) \leq \liminf_{\lambda \to 0}
  \E \norm[\big]{X_{\lambda}}^2_{L^\infty(0,T;H)} < N,\\
  \E \norm[\big]{\xi}_{L^1(0,T;L^1(D))} &\leq 
  \E \Bigl( \liminf_{\lambda \to 0} \norm[\big]{\xi_{\lambda}}_{L^1(0,T;L^1(D))}
     \Bigr) \leq \liminf_{\lambda \to 0}
  \E \norm[\big]{\xi_\lambda}_{L^1(0,T;L^1(D))} < N,
  \end{align*}
  i.e.
  \begin{align*}
  X   &\in L^2(\Omega;L^\infty(0,T;H)) \cap L^2(\Omega;L^2(0,T;V)),\\
  \xi &\in L^1(\Omega \times(0,T) \times D).
  \end{align*}
  The proof is thus complete.
\end{proof}

We conclude this section with a corollary that will be used in the
following.
\begin{coroll}   \label{cor:ona}
  There exists a constant $N$ such that
  \[
  \E\int_0^T\!\!\int_D \bigl( j(X) + j^*(\xi) \bigr) < 
  N\Bigl( \norm[\big]{X_0}_{L^2(\Omega;H)}^2
  + \norm[\big]{B}^2_{L^2(\Omega;L^2(0,T;\cL^2(U,H)))} \Bigr).
  \]
\end{coroll}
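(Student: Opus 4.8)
The plan is to revisit the passage to the limit $\lambda\to 0$ carried out in the proof of Proposition~\ref{prop:V0}, this time keeping track of integrability in $\omega$, i.e.\ applying lower semicontinuity of convex integrals on $L^1(\Omega\times(0,T)\times D)$ rather than pathwise. The elementary starting point is the identity \eqref{eq:pluto}: integrating it over $(0,T)\times D$, taking expectation, and invoking Lemma~\ref{lm:aspetta} (recall that $\beta_\lambda(x)x\geq 0$ for all $x\in\erre$, so the $L^1(\Omega\times(0,T)\times D)$-norm of $\beta_\lambda(X_\lambda)X_\lambda$ is exactly its integral), one obtains, uniformly in $\lambda$,
\[
\E\int_0^T\!\!\int_D \Bigl( j\bigl((I+\lambda\beta)^{-1}X_\lambda\bigr) + j^*\bigl(\beta_\lambda(X_\lambda)\bigr)\Bigr)
= \E\int_0^T\!\!\int_D \beta_\lambda(X_\lambda)(I+\lambda\beta)^{-1}X_\lambda
\leq \E\int_0^T\!\!\int_D \beta_\lambda(X_\lambda)X_\lambda
< \bar{N}(X_0,B),
\]
where $\bar{N}(X_0,B):=N\bigl(\norm{X_0}^2_{L^2(\Omega;H)}+\norm{B}^2_{L^2(\Omega;L^2(0,T;\cL^2(U,H)))}\bigr)$.

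Next I would fix a subsequence $(\lambda_n)$, \emph{not depending on $\omega$}, along which --- by the construction carried out in Step~3 of the proof of Proposition~\ref{prop:V0}, possibly after one further diagonal extraction --- one has $X_{\lambda_n}\to X$ almost everywhere on $\Omega\times(0,T)\times D$, hence also $(I+\lambda_n\beta)^{-1}X_{\lambda_n}\to X$ almost everywhere, and $\beta_{\lambda_n}(X_{\lambda_n})\wto\xi$ weakly in $L^1(\Omega\times(0,T)\times D)$. Since $\dom(\beta)=\erre$, the potential $j$ is finite and continuous on $\erre$, so $j\bigl((I+\lambda_n\beta)^{-1}X_{\lambda_n}\bigr)\to j(X)$ a.e.; as $j\geq 0$, Fatou's lemma yields $\E\int_0^T\!\!\int_D j(X)\leq\liminf_n\E\int_0^T\!\!\int_D j\bigl((I+\lambda_n\beta)^{-1}X_{\lambda_n}\bigr)$. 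For the conjugate part, the functional $v\mapsto\E\int_0^T\!\!\int_D j^*(v)$ is convex and, being a nonnegative convex integral functional, lower semicontinuous for strong $L^1$-convergence, hence weakly lower semicontinuous on $L^1(\Omega\times(0,T)\times D)$; therefore $\E\int_0^T\!\!\int_D j^*(\xi)\leq\liminf_n\E\int_0^T\!\!\int_D j^*\bigl(\beta_{\lambda_n}(X_{\lambda_n})\bigr)$. Adding the two inequalities and using superadditivity of $\liminf$ together with the uniform bound above,
\[
\E\int_0^T\!\!\int_D \bigl( j(X)+j^*(\xi)\bigr)
\leq \liminf_n \E\int_0^T\!\!\int_D \Bigl( j\bigl((I+\lambda_n\beta)^{-1}X_{\lambda_n}\bigr) + j^*\bigl(\beta_{\lambda_n}(X_{\lambda_n})\bigr)\Bigr)
\leq \bar{N}(X_0,B),
\]
which is the asserted estimate (allowing for a slightly larger constant should the strict inequality be insisted upon).

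The only genuinely delicate point is the first sentence of the second paragraph: ensuring that the a.e.\ convergence of $X_{\lambda_n}$ and the weak $L^1$-convergence of $\beta_{\lambda_n}(X_{\lambda_n})$ can be realized \emph{simultaneously along one and the same subsequence that does not depend on $\omega$}, so that the semicontinuity arguments genuinely apply on $\Omega\times(0,T)\times D$ and not merely for fixed $\omega$. This is precisely what the measurability analysis of Step~3 in the proof of Proposition~\ref{prop:V0} supplies (weak $L^1(\Omega\times(0,T)\times D)$-convergence of $\beta_\lambda(X_\lambda)$ to $\xi$, and $\P\otimes dt$-a.e.\ convergence of $X_\lambda$ to $X$ in $H$ along a subsequence, which upgrades to a.e.\ convergence on the full product after a further extraction). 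Everything else reduces to \eqref{eq:pluto}, Fatou's lemma, the semicontinuity of convex integral functionals, and Lemma~\ref{lm:aspetta}.
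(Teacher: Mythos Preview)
Your proof is correct and follows essentially the same route as the paper's own proof: both start from \eqref{eq:pluto} combined with Lemma~\ref{lm:aspetta} to get the uniform bound $\bar N(X_0,B)$, both invoke the $\omega$-independent subsequence furnished by Step~3 of the proof of Proposition~\ref{prop:V0}, and both conclude via Fatou's lemma for the $j(X)$ term and weak lower semicontinuity of the convex integral functional for the $j^*(\xi)$ term. The only organizational difference is that you apply Fatou and weak lower semicontinuity directly on the product space $\Omega\times(0,T)\times D$ (using the weak $L^1$-convergence of $\beta_\lambda(X_\lambda)$ on the full product established in Step~3), whereas the paper applies them pathwise on $(0,T)\times D$ first and then uses a second Fatou in $\omega$; this is why the paper speaks of ``two applications of Fatou's lemma'' and ``Fatou's lemma again''. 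Your identification of the one delicate point---simultaneous convergence along a single $\omega$-independent subsequence---is exactly right, and your justification via Step~3 is the same as the paper's.
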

\begin{proof}
  Thanks to Step 3 in the previous proof, there exists a sequence
  $\lambda$, independent of $\omega$, such that $X_\lambda \to X$
  a.e. in $(0,T) \times D$ and $\beta_\lambda(X_\lambda) \to \xi$
  weakly in $L^1((0,T) \times D)$. Proceeding as in the first part of
  the proof of Lemma \ref{lm:L1}, Lemma \ref{lm:aspetta} implies that
  there exists a constant $N$ such that
  \[
  \E\int_0^T\!\!\int_D \bigl( j(I+\lambda\beta)^{-1}X_\lambda) +
  j^*(\beta_\lambda(X_\lambda)) \bigr) \,dx\,ds < \bar{N}(X_0,B),
  \]
  where
  $\bar{N}(X_0,B):=N\bigl( \norm{X_0}_{L^2(\Omega;H)}^2 +
  \norm{B}^2_{L^2(\Omega;L^2(0,T;\cL^2(U,H)))} \bigr)$.
  Therefore, in analogy to Step 4 of the previous proof, two
  applications of Fatou's lemma yield
  \[
  \E\int_0^T\!\!\int_D j(X) \leq \liminf_{\lambda \to 0}
  \E\int_0^T\!\!\int_D j((I+\lambda\beta)^{-1}X_\lambda) < \bar{N}(X_0,B),
  \]
  as well as, by the weak lower semicontinuity of convex integrals and
  Fatou's lemma again,
  \[
  \E\int_0^T\!\!\int_D j^*(\xi) \leq \liminf_{\lambda \to 0}
  \E\int_0^T\!\!\int_D j^*(\beta_\lambda(X_\lambda)) < \bar{N}(X_0,B).
  \qedhere
  \]
\end{proof}


\ifbozza\newpage\else\fi
\section{Well-posedness with additive noise}
\label{sec:add}
In this section we prove well-posedness for the equation
\begin{equation}
\label{eq:add}
dX(t) + AX(t)\,dt + \beta(X(t))\,dt \ni B(t)\,dW(t),
\qquad X(0)=X_0,
\end{equation}
where $B$ is an $\cL^2(U,H)$-valued process. Note that this is just
equation \eqref{eq:0} with additive noise.

\begin{prop}  \label{prop:add}
  Assume that $X_0\in L^2(\Omega,\cF_0,\P; H)$ and that
  \[
  B \in L^2(\Omega;L^2(0,T;\cL^2(U,H)))
  \]
  is measurable and adapted.  Then equation \eqref{eq:V0}
  is well posed in $\mathscr{J}$.
  Moreover, $X(\omega,\cdot) \in C_w([0,T];H)$ for $\P$-almost all
  $\omega \in \Omega$.
\end{prop}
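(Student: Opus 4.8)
The plan is to deduce well-posedness of \eqref{eq:add} from that of equations of type \eqref{eq:V0}, by regularizing the additive coefficient $B$ via composition with powers of the resolvent of $A$. Set $B_\mu := (I+\mu A)^{-k}B$, $\mu>0$, with $k$ large enough that, by hypothesis (iv), $(I+\mu A)^{-k}$ maps $H$ into a space $V_0$ satisfying the requirements of Section~\ref{sec:reg} (e.g.\ $V_0=\dom(A^k)$). Then $B_\mu$ is measurable, adapted, $\cL^2(U,V_0)$-valued, satisfies $\norm{B_\mu}_{\cL^2(U,H)}\leq\norm{B}_{\cL^2(U,H)}$ since $(I+\mu A)^{-1}$ is an $H$-contraction, and $B_\mu\to B$ in $L^2(\Omega;L^2(0,T;\cL^2(U,H)))$ as $\mu\to0$. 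By Proposition~\ref{prop:V0} there is, for each $\mu$, a unique strong solution $(X_\mu,\xi_\mu)$ to \eqref{eq:V0} with datum $B_\mu$, with $X_\mu\in L^2(\Omega;L^\infty(0,T;H))\cap L^2(\Omega;L^2(0,T;V))$, $\xi_\mu\in\beta(X_\mu)$ a.e., $X_\mu(\omega,\cdot)\in C_w([0,T];H)$, and, by Lemma~\ref{lm:aspetta} and Corollary~\ref{cor:ona} together with $\norm{B_\mu}\leq\norm{B}$, the bound
\begin{equation*}
\norm{X_\mu}^2_{L^2(\Omega;L^\infty(0,T;H))}+\norm{X_\mu}^2_{L^2(\Omega;L^2(0,T;V))}+\E\int_0^T\!\!\int_D\bigl(j(X_\mu)+j^*(\xi_\mu)\bigr)\lesssim 1,
\end{equation*}
uniformly in $\mu$. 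By superlinearity of $j^*$ and the de la Vall\'ee-Poussin and Dunford-Pettis theorems, $(\xi_\mu)$ is weakly relatively compact in $L^1(\Omega\times(0,T)\times D)$, and $(X_\mu)$ is weakly relatively compact in $L^2(\Omega;L^2(0,T;V))$.

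The crucial and most delicate step is to upgrade this to \emph{strong} convergence of $(X_\mu)$, which is indispensable for identifying the limit of $(\xi_\mu)$ inside $\beta$ via Br\'ezis' lemma, and for which the pathwise Simon-compactness argument of Section~\ref{sec:reg} is no longer available, since the regularized convolutions $B_\mu\cdot W$ are not bounded in $C([0,T];V_0)$ uniformly in $\mu$. Instead I would show that $(X_\mu)$ is Cauchy. The difference $X_\mu-X_\nu$ solves $d(X_\mu-X_\nu)+A(X_\mu-X_\nu)\,dt+(\xi_\mu-\xi_\nu)\,dt=(B_\mu-B_\nu)\,dW$ with null initial datum; smoothing by $(I+\delta A)^{-m}$ as in Step~2 of the proof of Proposition~\ref{prop:V0} brings the drift into $L^1(0,T;H)$, so It\^o's formula applies to the resulting continuous $H$-valued semimartingale. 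Using the coercivity of $A$, the monotonicity of $\beta$ (to discard the nonnegative term $\int_0^t\langle\xi_\mu-\xi_\nu,X_\mu-X_\nu\rangle$, whose $\delta\to0$ limit is controlled via the abstract Jensen inequality of Lemma~\ref{lm:J} and Vitali's theorem, precisely because $j(X_\mu)+j^*(\xi_\mu)$ and $j(X_\nu)+j^*(\xi_\nu)$ belong to $L^1$), and Lemma~\ref{lm:DY} for the stochastic integral, one obtains
\begin{equation*}
\E\norm{X_\mu-X_\nu}^2_{L^\infty(0,T;H)}+\E\norm{X_\mu-X_\nu}^2_{L^2(0,T;V)}\lesssim\E\int_0^T\norm{B_\mu-B_\nu}^2_{\cL^2(U,H)}\,ds,
\end{equation*}
which tends to $0$ as $\mu,\nu\to0$. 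Hence $X_\mu\to X$ strongly in $L^2(\Omega;L^\infty(0,T;H))\cap L^2(\Omega;L^2(0,T;V))$, so, along a subsequence, $X_\mu\to X$ a.e.\ on $\Omega\times(0,T)\times D$.

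Next I would pass to the limit as in Section~\ref{sec:reg}: extract a weak limit $\xi_\mu\wto\xi$ in $L^1(\Omega\times(0,T)\times D)$; Br\'ezis' Lemma~\ref{lm:Brezis} (applicable since $\xi_\mu\in\beta(X_\mu)$, $X_\mu\to X$ a.e., and $\xi_\mu X_\mu$ is bounded in $L^1$ by Lemma~\ref{lm:aspetta}) yields $\xi\in\beta(X)$ a.e. Passing to the limit in the integral identity for $X_\mu$, using $B_\mu\cdot W\to B\cdot W$ in $L^2(\Omega;C([0,T];H))$ and the weak convergences of $\int_0^\cdot AX_\mu$ and $\int_0^\cdot\xi_\mu$ (exactly as in Step~1 of the proof of Proposition~\ref{prop:V0}), gives the strong-solution identity in $L^1(D)\cap V^*$ for all $t\in[0,T]$. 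Measurability and adaptedness of $X$ follow from its being a strong $L^2(\Omega;\cdot)$-limit of adapted processes, those of $\xi$ from Mazur's lemma and a.e.\ convergence along a subsequence (as in Step~3 there); weak lower semicontinuity of convex integrals and Fatou's lemma applied to the uniform bound give $j(X)+j^*(\xi)\in L^1(\Omega\times(0,T)\times D)$. Thus $(X,\xi)\in\mathscr{J}$, and since $X\in L^\infty(0,T;H)$ pathwise and $X\in C([0,T];V_0^*)$ by the identity, Lemma~\ref{lm:Strauss} yields $X(\omega,\cdot)\in C_w([0,T];H)$ for a.a.\ $\omega$.

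Finally, uniqueness in $\mathscr{J}$ and continuity of the solution map are easier than in Theorem~\ref{th:main} because the noise is additive: for two solutions $(X_1,\xi_1),(X_2,\xi_2)\in\mathscr{J}$ of \eqref{eq:add} with data $X_0^1,X_0^2$ and the same $B$, the stochastic terms cancel in the difference, so one works pathwise. Smoothing by $(I+\delta A)^{-m}$, applying the integration-by-parts formula of Lemma~\ref{lm:AC}, discarding the nonnegative $A$-term by monotonicity, and letting $\delta\to0$ (the cross term converging to $\int_0^t\langle\xi_1-\xi_2,X_1-X_2\rangle\geq0$ by monotonicity of $\beta$, the passage justified by Lemma~\ref{lm:J} and Vitali's theorem via $j(X_i)+j^*(\xi_i)\in L^1$) gives, $\P$-a.s.\ and for all $t$,
\begin{equation*}
\norm{X_1(t)-X_2(t)}_H^2+2C\int_0^t\norm{X_1(s)-X_2(s)}_V^2\,ds\leq\norm{X_0^1-X_0^2}_H^2 .
\end{equation*}
Taking suprema in $t$ and expectations yields the (Lipschitz) continuity of $X_0\mapsto X$ from $L^2(\Omega;H)$ to $L^2(\Omega;L^\infty(0,T;H))\cap L^2(\Omega;L^2(0,T;V))$; with $X_0^1=X_0^2$ it gives $X_1=X_2$, whence $\int_0^t(\xi_1-\xi_2)=0$ in $L^1(D)$ for all $t$ and $\xi_1=\xi_2$. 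This proves that \eqref{eq:add} is well posed in $\mathscr{J}$.
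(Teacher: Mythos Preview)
Your proposal is correct and follows the same overall architecture as the paper (resolvent regularization $B_\mu=(I+\mu A)^{-m}B$, uniform $\mathscr{J}$-bounds, Cauchy estimate, passage to the limit, identification of $\xi\in\beta(X)$ via Lemma~\ref{lm:Brezis}), but the route to the Cauchy estimate and to uniqueness is genuinely different.

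For the Cauchy estimate the paper does \emph{not} work directly with $X_\mu-X_\nu$. Instead it returns to the doubly-indexed family $X^\varepsilon_\lambda$ of Section~\ref{sec:reg}: since these are standard variational solutions with Lipschitz drift, the Krylov--Rozovski\u{\i} It\^o formula applies without any smoothing, giving $\norm{X^\varepsilon_\lambda-X^\delta_\lambda}\lesssim\norm{B^\varepsilon-B^\delta}$ for each fixed $\lambda$; one then lets $\lambda\to 0$ using only weak(*) lower semicontinuity of norms and Fatou's lemma. Your approach instead smooths $X_\mu-X_\nu$ by $(I+\delta A)^{-m}$, applies It\^o's formula to the resulting $H$-valued semimartingale, and handles the $\delta\to 0$ limit of the cross term $\int\langle\xi_\mu^\delta-\xi_\nu^\delta,X_\mu^\delta-X_\nu^\delta\rangle$ via Lemma~\ref{lm:J} and Vitali, exploiting the $L^1$-bound on $j(X_\mu)+j^*(\xi_\mu)$. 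This is exactly the machinery the paper develops only \emph{afterwards}, in Proposition~\ref{prop:contra}; you are in effect anticipating that argument and using it twice (once for Cauchy, once for uniqueness), whereas the paper keeps the existence step free of the resolvent-smoothing technique. What your route buys is self-containment (no need to revisit the $\lambda$-level once Proposition~\ref{prop:V0} is proved); what the paper's route buys is that the Cauchy estimate requires nothing beyond standard It\^o and Fatou, and the delicate Jensen/Vitali argument is isolated in a single continuous-dependence proposition that also covers the case of \emph{different} coefficients $B_1,B_2$, which is needed later for the fixed-point argument of Section~\ref{sec:pf}.

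For uniqueness and Lipschitz dependence on $X_0$, the paper again invokes Proposition~\ref{prop:contra} (with its stochastic integral and weighted-norm structure), while you observe that, with the \emph{same} additive $B$, the stochastic terms cancel and the argument becomes pathwise and deterministic. This is a legitimate simplification for Proposition~\ref{prop:add} taken in isolation; just note that Lemma~\ref{lm:AC} as stated asks for $u'\in L^2(0,T;V^*)$, whereas after smoothing you only have $(Y^\delta)'\in L^1(0,T;H)$, so the integration-by-parts step rests on the more elementary chain rule for $H$-valued absolutely continuous functions rather than on Lemma~\ref{lm:AC} itself. Also, the $L^1$-bound on $\xi_\mu X_\mu$ that you attribute to Lemma~\ref{lm:aspetta} is really supplied by Corollary~\ref{cor:ona} (Lemma~\ref{lm:aspetta} concerns $\beta_\lambda(X_\lambda)X_\lambda$, not the limit objects).
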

\begin{proof}
  We shall proceed in several steps: first we approximate the
  coefficient $B$ in such a way that the corresponding equation can be
  uniquely solved by the methods of the previous section. Then we pass
  to the limit in an appropriate way, obtaining a solution to
  \eqref{eq:add}, which is then shown to be unique.
  \smallskip\par\noindent
  \textsc{Step 1.} By Assumption A(iv), there exists
  $m \in \mathbb{N}$ such that $(I+A)^{-m}$ maps continuously
  $L^1$ to $L^\infty$. The space $V_0:=\dom(A^m)$, endowed with inner
  product
  \[
  \ip{u}{v}_{V_0} := \ip{u}{v}_H + \ip{A^mu}{A^mv}_H, \qquad u,v \in
  \dom(A^m),
  \]
  is a Hilbert space densely and continuously embedded in
  $V$. Moreover, the diagram
  \[
  \dom(A^m) \xrightarrow{\;(I+A)^m\;} L^1(D)
  \xrightarrow{\;(I+A)^{-m}\;} L^\infty(D)
  \]
  immediately shows that $V_0$ is also continuously embedded in
  $L^\infty(D)$. In particular, all hypotheses on $V_0$ of the
  previous section are met. Moreover, by the ideal property of
  Hilbert-Schmidt operators, setting, for any $\varepsilon>0$,
  \[
  B^\varepsilon := (I + \varepsilon A)^{-m} B,
  \]
  we have $B^\varepsilon \in L^2(\Omega;L^2(0,T;\cL^2(U,V_0)))$.  Then
  it follows by Proposition \ref{prop:V0} that, for any
  $\varepsilon>0$, there exist predictable processes
  \begin{gather*}
  X^\varepsilon \in L^2(\Omega;L^\infty(0,T;H)) 
     \cap L^2(\Omega;L^2(0,T;V)),\\
  \xi^\varepsilon \in L^1(\Omega \times(0,T) \times D),
  \end{gather*}
  with $X^\varepsilon(\omega,\cdot) \in C_w([0,T];H)$ for $\P$-almost
  all $\omega \in \Omega$, such that
  \begin{equation}  \label{eq:eps}
  X^\varepsilon(t) + \int_0^t AX^\varepsilon(s)\,ds
  + \int_0^t \xi^\varepsilon(s)\,ds = X_0 + \int_0^t B^\varepsilon(s)\,dW(s)
  \end{equation}
  in $V^* \cap L^1(D)$ for all $t \in [0,T]$. Moreover,
  $\xi^\varepsilon \in \beta(X^\varepsilon)$ a.e. in $(0,T) \times D$
  and
  $j(X^\varepsilon) + j^*(\xi^\varepsilon) \in L^1((0,T) \times D)$
  $\P$-almost surely.
  \smallskip\par\noindent
  \textsc{Step 2.} For any $\varepsilon>0$, the equation in $V^*$
  \[
  X^\varepsilon_\lambda(t) + \int_0^t AX^\varepsilon_\lambda(s)\,ds
  + \int_0 \beta_\lambda(X^\varepsilon_\lambda(s))\,ds
  = X_0 + \int_0^t B^\varepsilon(s)\,dW(s)
  \]
  admits a unique (variational) strong solution
  $X_\lambda^\varepsilon$. Taking into account the coercivity of $A$
  and the monotonicity of $\beta_\lambda$, It\^o's formula yields, for
  any $\delta>0$,
  \begin{align*}
  &\norm[\big]{X^\varepsilon_\lambda(t) - X^\delta_\lambda(t)}_H^2
  + \int_0^t \norm[\big]{X^\varepsilon_\lambda(s) - X^\delta_\lambda(s)}_V^2\,ds\\
  &\hspace{3em} \lesssim \int_0^t
  \bigl( X^\varepsilon_\lambda(s) - X^\delta_\lambda(s) \bigr)
  \bigl( B^\varepsilon(s) - B^\delta(s) \bigr)\,dW(s)
  + \int_0^t \norm[\big]{B^\varepsilon(s) - B^\delta(s)}^2_{\cL^2(U,H)}\,ds.
  \end{align*}
  Taking supremum in time and expectation, it easily follows from
  Lemma \ref{lm:DY} that
  \begin{align*}
  &\norm[\big]{X^\varepsilon_\lambda - X^\delta_\lambda}_{L^2(\Omega;L^\infty(0,T;H))}
  + \norm[\big]{X^\varepsilon_\lambda - X^\delta_\lambda}_{L^2(\Omega;L^2(0,T;V))}\\
  &\hspace{5em} \lesssim
     \norm[\big]{B^\varepsilon - B^\delta}_{L^2(\Omega;L^2(0,T;\cL^2(U,H)))}.
  \end{align*}
  On the other hand, the proof of Proposition \ref{prop:V0} shows that
  there exists a sequence $\lambda$, independent of $\varepsilon$,
  such that, for $\P$-almost all $\omega \in \Omega$,
  \begin{align*}
    X^\varepsilon_\lambda(\omega,\cdot) &\xrightharpoonup{\;*\;} 
    X^\varepsilon(\omega,\cdot)
    & &\text{in } L^\infty(0,T;H),\\
    X^\varepsilon_\lambda(\omega,\cdot) &\xrightharpoonup{\;\phantom{*}\;}
    X^\varepsilon(\omega,\cdot)
    & &\text{in } L^2(0,T;V),\\
    \beta_\lambda(X^\varepsilon_\lambda(\omega,\cdot)) 
    &\xrightharpoonup{\;\phantom{*}\;} \xi^\varepsilon(\omega,\cdot)
    & &\text{in } L^1((0,T) \times D)
  \end{align*}
  as $\lambda \to 0$. Since the weak* limit in $L^\infty(0,T;H)$ as
  $\lambda \to 0$ of $X^\varepsilon_\lambda-X^\delta_\lambda$ is
  $X^\varepsilon-X^\delta$, the weak* lower semicontinuity of the norm
  implies
  \[
  \norm[\big]{X^\varepsilon-X^\delta}_{L^\infty(0,T;H)} \leq
  \liminf_{\lambda \to 0}
  \norm[\big]{X_\lambda^\varepsilon-X_\lambda^\delta}_{L^\infty(0,T;H)},
  \]
  thus also, by Fatou's lemma,
  \[
  \E\norm[\big]{X^\varepsilon-X^\delta}^2_{L^\infty(0,T;H)} \leq
  \E \liminf_{\lambda \to 0}
  \norm[\big]{X_\lambda^\varepsilon-X_\lambda^\delta}^2_{L^\infty(0,T;H)}
  \lesssim \E \norm[\big]{B^\varepsilon - B^\delta}^2_{L^2(0,T;\cL^2(U,H))}.
  \]
  An entirely similar argument yields
  \[
  \E\norm[\big]{X^\varepsilon-X^\delta}^2_{L^2(0,T;V)}
  \lesssim \E \norm[\big]{B^\varepsilon - B^\delta}^2_{L^2(0,T;\cL^2(U,H))},
  \]
  so that
  \begin{align*}
  &\norm[\big]{X^\varepsilon - X^\delta}_{L^2(\Omega;L^\infty(0,T;H))}
  + \norm[\big]{X^\varepsilon - X^\delta}_{L^2(\Omega;L^2(0,T;V))}\\
  &\hspace{5em} \lesssim
     \norm[\big]{B^\varepsilon - B^\delta}_{L^2(\Omega;L^2(0,T;\cL^2(U,H)))}.
  \end{align*}
  Taking into account that
  $\norm[\big]{B^\varepsilon - B}_{L^2(\Omega;L^2(0,T;\cL^2(U,H)))}
  \to 0$
  as $\varepsilon \to 0$, it follows that $(X^\varepsilon)$ is a
  Cauchy sequence in
  $E:=L^2(\Omega;L^\infty(0,T;H)) \cap L^2(\Omega;L^2(0,T;V))$, hence
  there exists $X \in E$ such that $X^\varepsilon$ converges
  (strongly) to $X$ in $E$ as $\varepsilon \to 0$. In particular, the
  limit process $X$ is predictable.
  Moreover, by Corollary \ref{cor:ona}, there exists a constant $N$
  such that
  \begin{equation}  \label{eq:steps}
    \begin{split}
    \E\int_0^T\!\!\int_D \bigl(j(X^\varepsilon) + j^*(\xi^\varepsilon)\bigr)
    \,dx\,ds &< N\Bigl( \norm[\big]{X_0}^2_{L^2(\Omega;H)} 
    + \norm[\big]{B^\varepsilon}^2_{L^2(\Omega;L^2(0,T;\cL^2(U,H)))} \Bigr)\\
    &\leq N\Bigl( \norm[\big]{X_0}^2_{L^2(\Omega;H)} 
    + \norm[\big]{B}^2_{L^2(\Omega;L^2(0,T;\cL^2(U,H)))} \Bigr),
    \end{split}
  \end{equation}
  as it follows by the ideal property of Hilbert-Schmidt operators and
  the contractivity of $(I+\varepsilon A)^{-1}$.
  The criterion by de la Vall\'ee Poussin then implies
  that $(\xi^\varepsilon)$ is uniformly integrable on
  $\Omega \times (0,T) \times D$, hence, by the Dunford-Pettis
  theorem, $(\xi^\varepsilon)$ is weakly relatively compact in
  $L^1(\Omega \times (0,T) \times D)$. Therefore, passing to a
  subsequence of $\varepsilon$, denoted by the same symbol, there
  exists $\xi$ belonging to the latter space such that
  $\xi^\varepsilon \to \xi$ \luca{therein} in the weak topology. In
  particular, by an argument based on Mazur's lemma, entirely
  analogous to that used in Step 3 of the proof of Proposition
  \ref{prop:V0}, one infers that $\xi$ is a predictable process.
  \smallskip\par\noindent
  \textsc{Step 3.} We can now pass to the limit as $\varepsilon \to 0$
  in \luca{Equation} \eqref{eq:eps}, by a reasoning analogous to the one use
  in Step 1 of the proof of Proposition \ref{prop:V0}. As proved in
  the previous step, $X^\varepsilon$ converges strongly to $X$ in
  $L^2(\Omega;L^\infty(0,T;H))$, hence
  \[
  \operatorname*{ess\,sup}_{t\in[0,T]} \norm[\big]{X^\varepsilon(t)-X(t)}_H 
  \to 0
  \]
  in probability as $\varepsilon \to 0$.
  Let $\phi_0 \in V_0$ be arbitrary. Since $V_0 \embed L^\infty(D)$, one has
  \[
  \ip[\big]{X^\varepsilon(t)}{\phi_0} \to \ip[\big]{X(t)}{\phi_0}
  \]
  in probability for almost all $t \in [0,T]$. Let us set, for an
  arbitrary but fixed $t \in [0,T]$,
  $\phi:s \mapsto 1_{[0,t]}(s) \phi_0 \in L^2(0,T;V)$, so that
  $A\phi \in L^2(0,T;V^*)$. Recalling that $X^\varepsilon \to X$
  (strongly, hence also weakly) in $L^2(\Omega;L^2(0,T;V))$, it
  follows immediately that $X^\varepsilon \wto X$ in $L^2(0,T;V)$ in
  measure, hence
  \begin{align*}
    \int_0^t \ip{AX^\varepsilon}{\phi_0}\,ds &=
    \int_0^T \ip{AX^\varepsilon(s)}{\phi(s)}\,ds =
    \int_0^T \ip{X^\varepsilon(s)}{A\phi(s)}\,ds\\
    &\quad \to \int_0^T \ip{X(s)}{A\phi(s)}\,ds
    = \int_0^t \ip{AX(s)}{\phi_0}\,ds
  \end{align*}
  in probability as $\varepsilon \to 0$. A completely analogous
  reasoning shows that
  \[
  \int_0^t \ip{\xi^\varepsilon(s)}{\phi_0}\,ds \to \int_0^t
  \ip{\xi(s)}{\phi_0}\,ds
  \]
  in probability as $\varepsilon \to 0$. Doob's maximal inequality
  and the convergence
  \[
  \norm[\big]{B^\varepsilon - B}_{L^2(\Omega;L^2(0,T;\cL^2(U,H)))}
  \xrightarrow{\;\varepsilon \to 0\;} 0
  \]
  readily yield also that $B^\varepsilon \cdot W(t) \to B \cdot W(t)$
  in $H$ in probability for all $t \in [0,T]$. In particular, since
  $\phi_0\in V_0$ and $t \in [0,T]$ are arbitrary, we infer that
  \[
  X(t) + \int_0^t AX(s)\,ds + \int_0^t \xi(s)\,ds = 
  X_0 + \int_0^t B(s)\,dW(s)
  \]
  holds in $V_0^*$ for almost all $t$. Recalling that
  $\xi \in L^1(0,T;L^1(D)) \embed L^1(0,T;V_0^*)$, so that all terms
  except the first on the left-hand side have trajectories in
  $C([0,T];V_0^*)$, we conclude that the identity holds for all
  $t \in [0,T]$. Moreover, thanks to Lemma \ref{lm:Strauss},
  $X \in C([0,T];V_0^*)$ and $X \in L^\infty(0,T;H)$ imply
  $X \in C_w([0,T];H)$. Note also that all terms bar the second
  [third] one on the left-hand side are $L^1(D)$-valued
  [$V^*$-valued], hence the identity holds in $L^1(D) \cap V^*$ for
  all $t \in [0,T]$.
  \smallskip\par\noindent
  \textsc{Step 4.} Convergence of $X^\varepsilon \to X$ in
  $L^2(\Omega;L^\infty(0,T;H))$ implies convergence in measure in
  $\Omega \times (0,T) \times D$, hence, by Fatou's lemma,
  \eqref{eq:steps} yields
  \[
  \E\int_0^T\!\!\int_D j(X) < \bar{N}(X_0,B),
  \]
  where $\bar{N}(X_0,B)$ is the constant appearing in the last term of
  \eqref{eq:steps}. Similarly, since $\xi^\varepsilon \to \xi$ weakly
  in $L^1(\Omega \times (0,T) \times D)$, \eqref{eq:steps} and the
  weak lower semicontinuity of convex integrals yield
  \[
  \E\int_0^T\!\!\int_D j^*(\xi) < \bar{N}(X_0,B).
  \]
  To complete the proof of existence, we only need to show that
  $\xi \in \beta(X)$ a.e. in $\Omega \times (0,T) \times D$. Note that, passing
  to a subsequence of $\varepsilon$, still denoted by the same symbol,
  we have $X^\varepsilon \to X$ a.e. in $\Omega \times (0,T) \times D$. Recalling
  that $\xi^\varepsilon \in \beta(X^\varepsilon)$ a.e. in
  $\Omega \times (0,T) \times D$, \eqref{eq:steps} again implies
  \[
  \E\int_0^T\!\!\int_D X^\varepsilon \xi^\varepsilon = 
  \E\int_0^T\!\!\int_D \bigl(j(X^\varepsilon) + j^*(\xi^\varepsilon)\bigr)
  < \bar{N}(X_0,B).
  \]
  It follows by monotonicity that
  $X^\varepsilon\xi^\varepsilon \geq 0$, hence
  $X^\varepsilon\xi^\varepsilon \in L^1(\Omega \times (0,T) \times
  D)$.
  Br\'ezis' Lemma \ref{lm:Brezis} then yields $\xi \in \beta(X)$ a.e. in
  $\Omega \times (0,T) \times D$.

  Uniqueness and continuous dependence of the solution on the initial
  datum is an immediate consequence of the next result.
\end{proof}

We first need to introduce weighted (in time) versions of some spaces
of processes.  For any $p\in [1,\infty]$ and $\alpha \geq 0$, we shall
denote by $L^p_\alpha(0,T)$ the space $L^p(0,T)$ endowed with the norm
$\norm{f}_{L^p_\alpha(0,T)}:=\norm{t\mapsto e^{-\alpha
    t}f(t)}_{L^p(0,T)}$.  It is clear that $L^p(0,T)$ and
$L^p_\alpha(0,T)$, for different values of $\alpha$, are all
isomorphic (their norms are equivalent). Completely similar notation
will be used for vector-valued $L^p$ and $L^p_\alpha$ spaces. For
typographical economy, restricted only to the formulation of the
following proposition, let us define the Banach space
\[
F_\alpha := L^2(\Omega;L^\infty_\alpha(0,T;H)) 
           \cap L^2(\Omega;L^2_\alpha(0,T;V)),
\]
endowed with the norm
\[
\norm{\cdot}_{F_\alpha} := 
\norm{\cdot}_{L^2(\Omega;L^\infty_\alpha(0,T;H)) \cap L^2(\Omega;L^2_\alpha(0,T;V))} +
\sqrt{\alpha} \norm{\cdot}_{L^2(\Omega;L^2_\alpha(0,T;H))}
\]
\begin{prop}   \label{prop:contra}
  Let $(X_1,\xi_1)$, $(X_2,\xi_2)\in\mathscr{J}$ be solutions to
  \eqref{eq:add} with initial values $X_{01}$,
  $X_{02}\in L^2(\Omega, \cF_0;H)$ and progressively measurable
  diffusion coefficients $B_1$,
  $B_2\in L^2(\Omega; L^2(0,T; \cL^2(U,H)))$, respectively.  Then, for
  any $\alpha\geq0$,
  \[
    \norm{X_1-X_2}_{F_\alpha}\lesssim \norm{X_{01}-X_{02}}_{L^2(\Omega;H)}+
    \norm{B_1-B_2}_{L^2(\Omega; L^2_\alpha(0,T; \cL^2(U,H)))}.
  \]
  In particular, there is a unique solution $(X,\xi)\in\mathscr{J}$ to \eqref{eq:add}.
\end{prop}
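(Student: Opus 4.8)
The plan is to establish the displayed a priori estimate; uniqueness is then immediate by taking $X_{01}=X_{02}$ and $B_1=B_2$, and, together with the existence already proved in the first part of Proposition~\ref{prop:add}, this yields well-posedness of \eqref{eq:add} in $\mathscr{J}$. Set $X:=X_1-X_2$, $\xi:=\xi_1-\xi_2$, $X_0:=X_{01}-X_{02}$, $B:=B_1-B_2$, so that, in $L^1(D)\cap V^*$ for all $t\in[0,T]$,
\[
X(t)+\int_0^t AX(s)\,ds+\int_0^t \xi(s)\,ds=X_0+\int_0^t B(s)\,dW(s).
\]
Since $X$ is not regular enough for It\^o's formula to be applied to $\norm{X(\cdot)}_H^2$ directly, I would regularize exactly as in Step~2 of the proof of Proposition~\ref{prop:V0}: with $m\in\enne$ as in Assumption~A(iv) and $X^\delta:=(I+\delta A)^{-m}X$, $\xi^\delta:=(I+\delta A)^{-m}\xi$, $X_0^\delta:=(I+\delta A)^{-m}X_0$, $B^\delta:=(I+\delta A)^{-m}B$, one obtains a process $X^\delta$ with values in $V_0:=\dom(A^m)$, which embeds continuously in $V$ and in $L^\infty(D)$, solving the corresponding identity on $V_0^*$, to which It\^o's formula applies (equivalently, one may argue deterministically on $X^\delta-B^\delta\cdot W$ as in Remark~\ref{rmk:rtr}).

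It\^o's formula applied to $t\mapsto e^{-2\alpha t}\norm{X^\delta(t)}_H^2$ then yields, for every $t\in[0,T]$,
\begin{align*}
&e^{-2\alpha t}\norm{X^\delta(t)}_H^2
+2\alpha\int_0^t e^{-2\alpha s}\norm{X^\delta(s)}_H^2\,ds
+2\int_0^t e^{-2\alpha s}\ip{AX^\delta(s)}{X^\delta(s)}\,ds\\
&\quad{}+2\int_0^t e^{-2\alpha s}\ip{\xi^\delta(s)}{X^\delta(s)}\,ds
=\norm{X_0^\delta}_H^2+\int_0^t e^{-2\alpha s}\norm{B^\delta(s)}_{\cL^2(U,H)}^2\,ds\\
&\quad{}+2\int_0^t e^{-2\alpha s}X^\delta(s)B^\delta(s)\,dW(s).
\end{align*}
By coercivity $\ip{AX^\delta}{X^\delta}\geq C\norm{X^\delta}_V^2\geq0$; moreover $(I+\delta A)^{-m}$ is a contraction on $H$ and on $L^1(D)$ and converges strongly to the identity on both spaces as $\delta\to0$, so that $X_0^\delta\to X_0$ in $L^2(\Omega;H)$, $X^\delta\to X$ in $L^2(\Omega;L^2(0,T;H))$, $\xi^\delta\to\xi$ in $L^1(\Omega\times(0,T)\times D)$, and (by the ideal property of Hilbert--Schmidt operators) $\norm{B^\delta}_{\cL^2(U,H)}\leq\norm{B}_{\cL^2(U,H)}$ with $B^\delta\to B$ in $L^2(\Omega;L^2_\alpha(0,T;\cL^2(U,H)))$, all by dominated convergence. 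The only term lacking a favourable sign at fixed $\delta>0$ is the one with $\xi^\delta$, because smoothing destroys the inclusion $\xi\in\beta(X)$; but, exactly as in Step~2 of the proof of Proposition~\ref{prop:V0}, Young's inequality together with the abstract Jensen inequality (Lemma~\ref{lm:J}) gives
\[
\abs{X^\delta\xi^\delta}\leq j(X^\delta)+j^*(\xi^\delta)\leq(I+\delta A)^{-m}\bigl(j(X)+j^*(\xi)\bigr),
\]
whose right-hand side converges in $L^1(\Omega\times(0,T)\times D)$; hence $(X^\delta\xi^\delta)_\delta$ is uniformly integrable, and, along a subsequence on which $X^\delta\to X$ and $\xi^\delta\to\xi$ a.e., Vitali's theorem gives $X^\delta\xi^\delta\to X\xi$ in $L^1(\Omega\times(0,T)\times D)$; note $X\xi\geq0$ a.e.\ by monotonicity of $\beta$.

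The estimate is then obtained in a single passage to the limit. From the It\^o identity, using $\ip{AX^\delta}{X^\delta}\geq C\norm{X^\delta}_V^2$, bounding $\int_0^t e^{-2\alpha s}\norm{B^\delta}^2$ by the integral over all of $[0,T]$, and writing
\[
-\int_0^t e^{-2\alpha s}\ip{\xi^\delta}{X^\delta}\,ds
= -\int_0^t e^{-2\alpha s}\ip{\xi}{X}\,ds
+\int_0^t e^{-2\alpha s}\bigl(\ip{\xi}{X}-\ip{\xi^\delta}{X^\delta}\bigr)\,ds
\leq \rho_\delta,
\]
with $\rho_\delta:=\int_0^T\!\!\int_D\bigl|e^{-2\alpha s}(X\xi-X^\delta\xi^\delta)\bigr|$ (the first summand being $\leq0$), one gets, for all $t$,
\[
e^{-2\alpha t}\norm{X^\delta(t)}_H^2
+2\alpha\int_0^t e^{-2\alpha s}\norm{X^\delta}_H^2\,ds
+2C\int_0^t e^{-2\alpha s}\norm{X^\delta}_V^2\,ds
\leq \norm{X_0^\delta}_H^2+\norm{B^\delta}_{L^2_\alpha(0,T;\cL^2(U,H))}^2+2M_t^\delta+2\rho_\delta,
\]
where $M_t^\delta:=\int_0^t e^{-2\alpha s}X^\delta B^\delta\,dW$. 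Taking $\sup_{t\in[0,T]}$ (the two integrals on the left being nondecreasing in $t$) and then $\E$, and invoking Lemma~\ref{lm:DY} with $F=e^{-\alpha\cdot}X^\delta$, $G=e^{-\alpha\cdot}B^\delta$ — legitimate since $F_T^*\leq\norm{X}_{L^\infty(0,T;H)}\in L^2(\Omega)$ because $X_1,X_2\in\mathscr{J}$ — to absorb $\E\sup_t\abs{M_t^\delta}$ into the left-hand side, yields
\[
\E\norm{X^\delta}_{L^\infty_\alpha(0,T;H)}^2+\alpha\,\E\norm{X^\delta}_{L^2_\alpha(0,T;H)}^2+C\,\E\norm{X^\delta}_{L^2_\alpha(0,T;V)}^2
\lesssim \E\norm{X_0^\delta}_H^2+\E\norm{B^\delta}_{L^2_\alpha(0,T;\cL^2(U,H))}^2+\E\rho_\delta.
\]
Letting $\delta\to0$ — Fatou for the first term on the left, dominated convergence for the second, weak lower semicontinuity of $u\mapsto\E\norm{u}_{L^2_\alpha(0,T;V)}^2$ for the third (the left-hand side being bounded uniformly in $\delta$, so $X^\delta\wto X$ in $L^2(\Omega;L^2_\alpha(0,T;V))$), together with $\E\norm{X_0^\delta}_H^2\to\E\norm{X_0}_H^2$, $\E\norm{B^\delta}_{L^2_\alpha}^2\to\E\norm{B}_{L^2_\alpha}^2$ and $\E\rho_\delta\to0$ — gives $\norm{X_1-X_2}_{F_\alpha}^2\lesssim\norm{X_{01}-X_{02}}_{L^2(\Omega;H)}^2+\norm{B_1-B_2}_{L^2(\Omega;L^2_\alpha(0,T;\cL^2(U,H)))}^2$, and the claim follows on taking square roots. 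With $X_{01}=X_{02}$, $B_1=B_2$ this forces $X_1=X_2$ in $L^\infty(0,T;H)$ $\P$-a.s., hence as processes since both have weakly continuous paths in $H$, and then $\xi_1=\xi_2$ from the equation; combined with existence, this proves well-posedness.

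The crux — the only genuine difficulty — is the one familiar from singular monotone equations: It\^o's formula forces the regularization $(I+\delta A)^{-m}$, which destroys $\xi\in\beta(X)$, so the good sign of $\ip{\xi}{X}$ is recovered only in the limit $\delta\to0$, and that limit is licit precisely because of the uniform integrability furnished by the abstract Jensen inequality, exactly as in the uniqueness proof of Proposition~\ref{prop:V0}. The exponential weight, by contrast, is pure bookkeeping: it contributes only the manifestly nonnegative term $2\alpha\int_0^t e^{-2\alpha s}\norm{X^\delta}_H^2$ (which accounts for the $\sqrt{\alpha}$-summand of the $F_\alpha$-norm) and rescales the Hilbert--Schmidt norm of $B$ on the right-hand side; this is exactly the gain that will later let the same estimate, applied with $B_i=B(\cdot,\cdot,X_i)$, produce a contraction in the fixed-point argument of Section~\ref{sec:pf}.
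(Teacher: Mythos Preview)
Your argument follows the paper's strategy closely: regularize by $(I+\delta A)^{-m}$, apply It\^o's formula with the exponential weight, control the nonlinear term via Young plus the abstract Jensen inequality, and pass to the limit $\delta\to 0$. The only structural variant is the order of operations---you take $\sup_t$ and $\E$ first and carry a remainder $\rho_\delta$, whereas the paper passes $\delta\to 0$ term by term in the It\^o identity and only then takes $\sup_t$ and $\E$; both orderings work.

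There is, however, one gap that needs repair. You write
\[
\abs{X^\delta\xi^\delta}\leq j(X^\delta)+j^*(\xi^\delta)\leq(I+\delta A)^{-m}\bigl(j(X)+j^*(\xi)\bigr)
\]
and claim the right-hand side converges in $L^1(\Omega\times(0,T)\times D)$. But here $X=X_1-X_2$, $\xi=\xi_1-\xi_2$, and membership in $\mathscr{J}$ only gives $j(X_i),\,j^*(\xi_i)\in L^1$ for $i=1,2$. Since $j$ is merely convex and even, with no doubling condition assumed (think $j(x)=e^{\abs{x}}-1-\abs{x}$), there is no reason for $j(X_1-X_2)$ to be integrable, so the dominating sequence you produce may not lie in $L^1$ at all. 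The paper's own proof of this proposition inserts a factor $\tfrac12$ precisely here: from $\tfrac14\abs{X^\delta\xi^\delta}\leq j(X^\delta/2)+j^*(\xi^\delta/2)\leq(I+\delta A)^{-m}\bigl(j(X/2)+j^*(\xi/2)\bigr)$ and, by convexity and evenness,
\[
j(X/2)=j\Bigl(\tfrac12 X_1+\tfrac12(-X_2)\Bigr)\leq\tfrac12\bigl(j(X_1)+j(X_2)\bigr)\in L^1,
\qquad
j^*(\xi/2)\leq\tfrac12\bigl(j^*(\xi_1)+j^*(\xi_2)\bigr)\in L^1,
\]
the domination is by a genuine $L^1$ sequence, uniform integrability follows, and Vitali applies. (Note that Step~2 of the proof of Proposition~\ref{prop:V0}, which you cite as template, glosses over this same point; the paper is more careful in the present proof.) With this modification your argument goes through.
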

\begin{proof}
  Setting
  \[
  Y := X_1-X_2, \qquad Y_0 := X_{01}-X_{02}, \qquad G:=B_1-B_2,
  \]
  one has
  \[
  Y(t) + \int_0^t AY(s)\,ds + \int_0^t \zeta(s)\,ds
  = Y_0 + \int_0^t G(s)\,dW(s)
  \]
  in $V^* \cap L^1(D)$, where $\zeta:=\xi_1-\xi_2$, and $\xi_1$,
  $\xi_2$ are defined in the obvious way. By the hypotheses on $A$,
  there exists $m \in \mathbb{N}$ such that, using the notation
  $h^\delta:=(I+\delta A)^{-m}h$ for any $h$ for which it makes sense,
  \[
  AY^\delta, \; \zeta^\delta \in L^1(\Omega;L^1(0,T;H)),
  \]
  while $Y_0^\delta$ and $G^\delta$ have the same
  integrability properties of $Y$, $Y_0$ and $G$, respectively. In
  particular, we have
  \[
  Y^\delta(t) + \int_0^t AY^\delta(s)\,ds + \int_0^t \zeta^\delta(s)\,ds
  = Y^\delta_0 + \int_0^t G^\delta(s)\,dW(s)
  \]
  in $V^*$. Let $\alpha>0$ be arbitrary but fixed, and add a
  superscript $\alpha$ to any process that is multiplied pointwise by
  the function $t \mapsto e^{-\alpha t}$. The integration by parts
  formula yields
  \[
  Y^{\delta,\alpha}(t) + \int_0^t (A+\alpha I)Y^{\delta,\alpha}(s)\,ds +
  \int_0^t \zeta^{\delta,\alpha}(s)\,ds = Y^\delta_0 + \int_0^t
  G^{\delta,\alpha}(s)\,dW(s),
  \]
  to which we can apply It\^o's formula for the square of
  the norm in $H$, obtaining, using the coercivity of $A$,
  \begin{align*}
  &\norm[\big]{Y^{\delta,\alpha}(t)}_H^2
  + 2\alpha \int_0^t \norm[\big]{Y^{\delta,\alpha}(s)}_H^2\,ds
  + 2 C \int_0^t \norm[\big]{Y^{\delta,\alpha}(s)}^2_V\,ds\\
  &\hspace{3em} + 2\int_0^t \ip[\big]{Y^{\delta,\alpha}(s)}{\zeta^{\delta,\alpha}(s)}\,ds\\
  &\hspace{5em} \leq \norm[\big]{Y_0^\delta}_H^2
  + \int_0^t Y^{\delta,\alpha}(s) G^{\delta,\alpha}(s)\,dW(s)
  + \int_0^t \norm[\big]{G^{\delta,\alpha}(s)}_{\cL^2(U,H)}^2\,ds.
  \end{align*}
  We are now going to pass to the limit as $\delta \to 0$: the first
  term on the left-hand side and on the right-hand side clearly
  converge to $\norm{Y^\alpha(t)}_H^2$ and $\norm{Y_0}_H^2$,
  respectively. Since $(I+\delta A)^{-1}$ converges to the identity in
  $H$ as well as in $V$ in the strong operator topology, the dominated
  convergence theorem yields
  \begin{align*}
  \int_0^t \norm[\big]{Y^{\delta,\alpha}(s)}^2_V\,ds
  &\longrightarrow
  \int_0^t \norm[\big]{Y^{\alpha}(s)}^2_V\,ds,\\
  \int_0^t \norm[\big]{G^{\delta,\alpha}(s)}_{\cL^2(U,H)}^2\,ds
  &\longrightarrow \int_0^t \norm[\big]{G^\alpha(s)}_{\cL^2(U,H)}^2\,ds
  \end{align*}
  as $\delta \to 0$ for all $t \in [0,T]$.
  Defining the real local martingales
  \[
  M^{\delta,\alpha} := (Y^{\delta,\alpha} G^{\delta,\alpha}) \cdot W, \qquad
  M^\alpha := (Y^\alpha G^\alpha) \cdot W,
  \]
  in order to establish convergence in probability (uniformly on
  compact sets) of the sequence $M^{\delta,\alpha}$ to $M^\alpha$ as
  $\delta \to 0$, it is sufficient to show that
  $[M^{\delta,\alpha}-M^\alpha,M^{\delta,\alpha}-M^\alpha]_T$
  converges to zero in probability. To this purpose, note that
  \begin{align*}
  [M^{\delta,\alpha}-M^\alpha,M^{\delta,\alpha}-M^\alpha]^{1/2}_T &= 
  \norm[\big]{Y^{\delta,\alpha} G^{\delta,\alpha}%
              - Y^\alpha G^\alpha}_{L^2(0,T;\cL^2(U,\erre))}\\
  &\leq \norm[\big]{Y^{\delta,\alpha} G^{\delta,\alpha}%
                    - Y^{\delta,\alpha} G^{\alpha}}_{L^2(0,T;\cL^2(U,\erre))}\\
  &\quad + \norm[\big]{Y^{\delta,\alpha} G^{\alpha}%
                       - Y^\alpha G^\alpha}_{L^2(0,T;\cL^2(U,\erre))},
  \end{align*}
  where
  \[
  \norm[\big]{Y^{\delta,\alpha}(t) G^{\delta,\alpha}(t)
    - Y^{\delta,\alpha}(t) G^{\alpha}(t)}_{\cL^2(U,\erre))}
  \leq \norm[\big]{Y^\alpha(t)}_H
  \norm[\big]{G^{\delta,\alpha}(t) - G^\alpha(t)}_{\cL^2(U,H))}
  \]
  for all $t \in [0,T]$. Since the right-hand side converges to $0$ as
  $\delta \to 0$ and it is bounded by
  $2\norm{Y^\alpha}_{L^\infty(0,T;H)}
  \norm{G^\alpha(t)}_{\cL^2(U,H)}$, and
  $G^\alpha \in L^2(0,T;\cL^2(U,H))$, the dominated
  convergence theorem yields
  \[
  \norm[\big]{Y^{\delta,\alpha} G^{\delta,\alpha}
  - Y^{\delta,\alpha} G^{\alpha}}_{L^2(0,T;\cL^2(U,\erre))} \to 0
  \]
  as $\delta \to 0$. A completely analogous argument shows
  that
  $\norm[\big]{Y^{\delta,\alpha} G^{\alpha} - Y^\alpha
    G^\alpha}_{L^2(0,T;\cL^2(U,\erre))}$ tends to $0$ as
  $\delta \to 0$ as well.

  We are now going to show that
  $Y^{\delta,\alpha} \zeta^{\delta,\alpha} \to Y^\alpha \zeta^\alpha$ in
  $L^1(\Omega \times (0,T) \times D)$, which clearly implies that
  \[
  \int_0^t\!\!\int_D Y^{\delta,\alpha} \zeta^{\delta,\alpha} \to
  \int_0^t\!\!\int_D Y^\alpha \zeta^\alpha
  \]
  in probability for all $t \in [0,T]$. Since
  $Y^{\delta,\alpha} \to Y^\alpha$ and
  $\zeta^{\delta,\alpha} \to \zeta^\alpha$ in measure in
  $\Omega \times (0,T) \times D$, Vitali's theorem implies strong
  convergence in $L^1$ if the sequence
  $(Y^{\delta,\alpha} \zeta^{\delta,\alpha})$ is uniformly integrable
  in $\Omega \times (0,T) \times D$. In turn, the latter is certainly
  true if $\bigl(\abs{Y^{\delta,\alpha} \zeta^{\delta,\alpha}}\bigr)$
  is dominated by a sequence that converges strongly in $L^1$. In
  order to prove this property, note that $j$ and $j^*$ are increasing
  on $\erre_+$, hence
  \begin{align*}
  \frac14 \abs[\big]{Y^{\delta,\alpha}(\omega,t,x) \zeta^{\delta,\alpha}(\omega,t,x)}
  &\leq j\bigl( e^{-\alpha t} \abs{Y^\delta(\omega,t,x)}/2 \bigr) 
  + j^*\bigl( e^{-\alpha t} \abs{\zeta^\delta(\omega,t,x)}/2 \bigr)\\
  &\leq j\bigl( \abs{Y^\delta(\omega,t,x)}/2 \bigr) 
  + j^*\bigl( \abs{\zeta^\delta(\omega,t,x)}/2 \bigr),
  \end{align*}
  so that, by the symmetry of $j$ and $j^*$, and by the Jensen
  inequality of Lemma \ref{lm:J},
  \[
  \frac14 \abs[\big]{Y^{\delta,\alpha} \zeta^{\delta,\alpha}}
  \leq j(Y^\delta/2) + j^*(\zeta^\delta/2)
  \leq (I+\delta A)^{-m} \bigl( j(Y/2) + j^*(\zeta/2) \bigr),
  \]
  where, by convexity and symmetry,
  \[
  j(Y/2) = j\Bigl( \frac12 X_1 + \frac12(-X_2) \Bigr) 
  \leq \frac12 \bigl( j(X_1) + j(X_2) \bigr)
  \in L^1(\Omega \times (0,T) \times D),
  \]
  and, completely analogously,
  \[
  j^*(\zeta/2) \leq \frac12 \bigl( j^*(\xi_1) + j^*(\xi_2) \bigr)
  \in L^1(\Omega \times (0,T) \times D),
  \]
  hence
  \[
  \abs[\big]{Y^{\delta,\alpha} \zeta^{\delta,\alpha}} \lesssim
  (I+\delta A)^{-m} \bigl( j(X_1) + j(X_2) + j^*(\xi_1) + j^*(\xi_2) \bigr).
  \]
  Since the right-hand side of this expression converges strongly in
  $L^1(\Omega \times (0,T) \times D)$ as $\delta \to 0$, it is, a
  fortiori, uniformly integrable, and so is the left-hand side.

  We have thus obtained
  \begin{align*}
  &\norm[\big]{Y^\alpha(t)}_H^2
  + 2\alpha \int_0^t \norm[\big]{Y^\alpha(s)}_H^2\,ds
  + 2\int_0^t \cE\bigl(Y^\alpha(s),Y^\alpha(s)\bigr) \,ds\\
  &\hspace{3em}
  + 2\int_0^t\!\!\int_D Y^\alpha(s,x) \zeta^\alpha(s,x) \,dx\,ds\\
  &\hspace{5em} \leq \norm[\big]{Y_0}_H^2
  + \int_0^t Y^\alpha(s) G^\alpha(s)\,dW(s)
  + \int_0^t \norm[\big]{G^\alpha(s)}_{\cL^2(U,H)}^2\,ds,
  \end{align*}
  where, by monotonicity,
  $Y^\alpha \zeta^\alpha = e^{-2\alpha \cdot} (X_1-X_2)(\xi_2-\xi_2)
  \geq 0$,
  hence, taking the $L^\infty(0,T)$ norm and expectation on both
  sides,
  \begin{multline*}
  \norm[\big]{Y^\alpha}_{L^2(\Omega;L^\infty(0,T;H))} +
  \sqrt{\alpha} \norm[\big]{Y^\alpha}_{L^2(\Omega;L^2(0,T;H))} +
  \norm[\big]{Y^\alpha}_{L^2(\Omega;L^2(0,T;V))}\\
  \lesssim
  \norm[\big]{Y_0}_{L^2(\Omega;H)} + \biggl( \E\sup_{t\leq T} \abs[\bigg]{%
  \int_0^t Y^\alpha(s) G^\alpha(s)\,dW(s)} \biggr)^{1/2} 
  + \norm[\big]{G^\alpha}_{L^2(\Omega;L^2(0,T;\cL^2(U,H)))}.
  \end{multline*}
  By Lemma \ref{lm:DY}, one has
  \begin{align*}
  \biggl( \E\sup_{t\leq T} \abs[\bigg]{\int_0^t Y^\alpha(s) G^\alpha(s)\,dW(s)}
  \biggr)^{1/2} &\leq \varepsilon \norm[\big]{Y^\alpha}_{L^2(\Omega;L^\infty(0,T;H))}\\
  &\quad + N(\varepsilon) \norm[\big]{G^\alpha}_{L^2(\Omega;L^2(0,T;\cL^2(U,H)))},
  \end{align*}
  with $\varepsilon>0$ arbitrary. Choosing $\varepsilon$ sufficiently
  small and rearranging terms, one obtains
  \[
    \norm{X_1-X_2}_{F_\alpha}\lesssim \norm{X_{01}-X_{02}}_{L^2(\Omega;H)}+
    \norm{B_1-B_2}_{L^2(\Omega; L^2_\alpha(0,T; \cL^2(U,H)))}
  \]
  as claimed.
  
  Choosing $\alpha=0$, $X_{01}=X_{02}$, and $B_1=B_2$, one gets
  immediately $X_1=X_2$, hence also, by substitution,
  \[
  \int_0^t(\xi_1(s)-\xi_2(s))\,ds = 0 \qquad\forall t\in[0,T],
  \]
  which implies uniqueness of $\xi$.
\end{proof}


\ifbozza\newpage\else\fi
\section{Proof of the main result}
\label{sec:pf}
Let $Y \in L^2(\Omega;L^2(0,T;H))$ be a progressively measurable
process, $X_0 \in L^2(\Omega,\cF_0,\P;H)$, and consider the
equation
\begin{equation}   \label{eq:XY}
dX(t) + AX(t)\,dt + \beta(X(t))\,dt \ni B(t,Y(t))\,dW(t),
\qquad X(0)=X_0.
\end{equation}
Since $B(\cdot,Y)$ is $U$-measurable, adapted, and belongs to
$L^2(\Omega;L^2(0,T;\cL^2(U,H)))$, the above equation is well-posed in
$\mathscr{J}$ by Proposition \ref{prop:add}, hence one can define a map
\begin{align*}
  \Gamma: L^2(\Omega;H) \times L^2(\Omega;L^2(0,T;H))
  &\longrightarrow L^2(\Omega;L^2(0,T;H))\times L^1(\Omega\times(0,T)\times D)\\
  (X_0,Y) &\longmapsto (X,\xi),
\end{align*}
where $(X,\xi)$ is the unique process in $\mathscr{J}$ solving
\eqref{eq:XY}.  Denoting the $L^2(\Omega; L^2(0,T; H))$-valued
component of $\Gamma$ by $\Gamma_1$ and the
$L^1(\Omega\times(0,T)\times D)$-valued component by $\Gamma_2$, we
are going to show that $Y \mapsto \Gamma_1(X_0,Y)$ is a (strict)
contraction of $L^2(\Omega;L^2(0,T;H))$, if endowed with a suitably
chosen equivalent norm. Let $X_i=\Gamma_1(X_{0i},Y_i)$, $i=1,2$, with
obvious meaning of the symbols. For any $\alpha \geq 0$, Proposition
\ref{prop:contra} yields
\begin{equation}
\label{eq:alfa}
\begin{split}
  &\norm[\big]{X_1-X_2}_{L^2(\Omega;L^\infty_\alpha(0,T;H)) \cap%
  L^2(\Omega;L^2_\alpha(0,T;V))}
  + \sqrt{\alpha} \norm[\big]{X_1-X_2}_{L^2(\Omega;L^2_\alpha(0,T;H))}\\
  &\hspace{5em} \lesssim \norm[\big]{X_{01}-X_{02}}_{L^2(\Omega;H)}
  + \norm[\big]{B(\cdot,Y_1)-B(\cdot,Y_2)}_{L^2(\Omega;L^2_\alpha(0,T;\cL^2(U,H)))},
\end{split}
\end{equation}
in particular, by the Lipschitz continuity of $B$,
\begin{align}
  \nonumber
  \norm[\big]{X_1-X_2}_{L^2(\Omega;L^2_\alpha(0,T;H))} &\lesssim
  \frac{1}{\sqrt{\alpha}} \norm[\big]{X_{01}-X_{02}}_{L^2(\Omega;H)}\\
  \nonumber
  &\quad + \frac{1}{\sqrt{\alpha}}
  \norm[\big]{B(\cdot,Y_1)-B(\cdot,Y_2)}_{L^2(\Omega;L^2_\alpha(0,T;\cL^2(U,H)))}\\
  \label{eq:alfetta}
  &\lesssim \frac{1}{\sqrt{\alpha}} \Bigl(
  \norm[\big]{X_{01}-X_{02}}_{L^2(\Omega;H)}
  + \norm[\big]{Y_1-Y_2}_{L^2(\Omega;L^2_\alpha(0,T;H))} \Bigr),
\end{align}
where the implicit constant does not depend on $\alpha$. In
particular, if $X_{01}=X_{02}$, choosing $\alpha$ large enough, one
has that, for any $X_0 \in L^2(\Omega,H)$, $Y \mapsto \Gamma_1(X_0,Y)$
is a contraction of $L^2(\Omega;L^2_\alpha(0,T;H))$. It follows by the
Banach fixed-point theorem that $\Gamma_1(X_0,\cdot)$ has a unique fixed
point $X$ therein, hence also in $L^2(\Omega;L^2(0,T;H))$ by equivalence
of norms. 
Setting $\xi:=\Gamma_2(X_0,X)$,
by definition of the map $\Gamma$, $(X,\xi)$
is a solution to \eqref{eq:0} and it belongs to
$\mathscr{J}$.

Let $X_{01}$, $X_{02}\in L^2(\Omega, \cF_0; H)$ and
$X_1$, $X_2$ be the unique fixed points of the maps $\Gamma_1(X_{0i},\cdot)$,
$i=1,2$, respectively, and $\xi_i:=\Gamma_2(X_{0i}, X_i)$, $i=1,2$.
Replacing $Y_i$ with $X_i=\Gamma_1(X_{0i},X_i)$, $i=1,2$, in
\eqref{eq:alfetta} yields
\[
\norm[\big]{X_1-X_2}_{L^2(\Omega;L^2_\alpha(0,T;H))} \leq
C_1 \norm[\big]{X_{01}-X_{02}}_{L^2(\Omega;H)}
+ C_2 \norm[\big]{X_1-X_2}_{L^2(\Omega;L^2_\alpha(0,T;H))},
\]
with $C_1>0$, $C_2\in \mathopen]0,1\mathclose[$, hence, by equivalence
of norms,
\[
\norm[\big]{X_1-X_2}_{L^2(\Omega;L^2(0,T;H))} \lesssim
\norm[\big]{X_{01}-X_{02}}_{L^2(\Omega;H)}.
\]
This implies, substituting $Y_i$ with $X_i=\Gamma(X_{0i},X_i)$,
$i=1,2$, in \eqref{eq:alfa}, with $\alpha=0$,
\begin{align*}
  &\norm[\big]{X_1-X_2}_{L^2(\Omega;L^\infty(0,T;H)) \cap%
  L^2(\Omega;L^2(0,T;V))}\\
  &\hspace{5em} \lesssim \norm[\big]{X_{01}-X_{02}}_{L^2(\Omega;H)}
  + \norm[\big]{B(\cdot,X_1)-B(\cdot,X_2)}_{L^2(\Omega;L^2(0,T;\cL^2(U,H)))}\\
  &\hspace{5em} \lesssim \norm[\big]{X_{01}-X_{02}}_{L^2(\Omega;H)}
  + \norm[\big]{X_1-X_2}_{L^2(\Omega;L^2(0,T;H))}\\
  &\hspace{5em} \lesssim \norm[\big]{X_{01}-X_{02}}_{L^2(\Omega;H)}.
\end{align*}
Choosing $\alpha=0$ and $X_{01}=X_{02}$, one gets immediately $X_1=X_2$,
hence also, by substitution,
\[
  \int_0^t(\xi_1(s)-\xi_2(s))\,ds = 0 \qquad\forall t\in[0,T],
\]
which implies uniqueness of $\xi$.



\ifbozza\newpage\else\fi
\bibliographystyle{amsplain}
\bibliography{ref}

\end{document}